 \newtheorem{thm}{Theorem}[section]
 \newtheorem{lem}[thm]{Lemma} 
 \newtheorem{prop}[thm]{Proposition} 
 \newtheorem{cor}[thm]{Corollary} 
 \newtheorem{defn}[thm]{Definition} 
 \newtheorem{rmq}[thm]{Remark} 
 \newtheorem{ex}[thm]{Example}
 \newtheorem{claim}[thm]{Claim}
\newcommand\supp{\mathord{\mathrm{supp}}}
\newcommand\degg{\mathord{\mathrm{deg}}}
\newcommand\lex{\mathord{\mathrm{lex}}}
\newcommand\wg{\mathord{\mathrm{wg}}}
\newcommand\Occ{\mathord{\mathrm{Occ}}}
\newenvironment{amssidewaystable}
  {\begin{sidewaystable}\vspace*{.83\textwidth}\begin{minipage}{\textheight}\centering}
  {\end{minipage}\end{sidewaystable}}
\begin{document}
\normalem
\title{On braids and links up to link-homotopy}
	
	\author[Emmanuel Graff]{Emmanuel Graff}
	\address{Normandie Univ, UNICAEN, CNRS, LMNO, 14000 Caen, France}
	\email{emmanuel.graff@unicaen.fr}
	 	
	\date{\today}
	
	
	\subjclass[2020]{ 
	    57K10 
		20F36, 
	}
	
	\keywords{Links, Braid groups, Link-homotopy, Claspers.}
	
\begin{abstract}
This paper deals with links and braids up to link-homotopy, studied from the viewpoint of Habiro's clasper calculus. More precisely, we use clasper homotopy calculus in two main directions. First, we define and compute a faithful linear representation of the homotopy braid group, by using claspers as geometric commutators. Second, we give a geometric proof of Levine's classification of 4-component links up to link-homotopy, and go further with the classification of 5-component links in the algebraically split case.
\end{abstract}
    \maketitle

\section{Introduction}

The notion of \emph{link-homotopy} was introduced in 1954 by J.W. Milnor in \cite{MilnorLinkgrp}, in the context of knot theory. It is an equivalence relation on links that allows continuous deformations during which two distinct components remain disjoint at all times, but each component can self-intersect. Any knot is link-homotopic to the trivial one, but for links with more than one component this equivalence relation turns out to be quite rich and intricate. Since Milnor’s seminal work, link-homotopy has been the subject of numerous works in knot theory see e.g. \cite{GoldsmithHomotopybraids,Levine4comp,OrrHomoInvLinks,HabeggerLinHomotopy}, but also more generally in the study of codimension 2 embeddings (and in particular \emph{knotted surfaces} in dimension 4) \cite{MasseyRolfsenHomoClassHigherdimLinks,BartelsTeichner2DimLinksNullHomo,AudouxJBWagnerCodim2EmbeddingHomo} and  \emph{link-maps} (self-immersed spheres) \cite{FennRolfsenSphereHomo4space,KirkLinkmaps4sphere,KoschorkeLinkmapsclass,SchneidermanTeichnerGroupDisjoint2sphere4space}. In this paper we are interested in the study of link-homotopy for braids and links.

\

The \emph{homotopy braid group} has been studied by many authors. In \cite{GoldsmithHomotopybraids} Goldsmith gives an example of a non-trivial braid up to isotopy that is trivial up to link-homotopy; she also gives a presentation of the homotopy braid group. A representation of the homotopy braid group is given by Humphries in \cite{HumphriesTorsion}. He uses it to show that the homotopy braid group is torsion-free for less than 6 strands. Finally the pure homotopy braid group has been studied by Habegger and Lin in \cite{HabeggerLinHomotopy} as an intermediate object for the classification of links up to link-homotopy. As further developed below, our first main result is another linear representation of the homotopy braid group (Theorem \ref{thmgammarpz}), which we prove to be faithful (Theorem \ref{thminj}) and which is computed explicitly in Theorem \ref{thmcalculgamma}.

\

We also address the problem initially posed by Milnor in \cite{MilnorLinkgrp}, of classifying links in the 3-sphere up to link-homotopy. Milnor himself answered the question for the 2 and 3-component case. Furthermore, Habegger and Lin \cite{HabeggerLinHomotopy} proposed a complete classification, using a subtle algebraic equivalence relation on pure braids, where two equivalent braids correspond to link-homotopic links. A more direct algebraic approach had been proposed by Levine \cite{Levine4comp} just before the work of Habegger--Lin in the $4$-component case. Our second main result is a new geometric proof of Levine’s classification of $4$-component links up to link-homotopy (Theorem \ref{thm4compclass}). 
This approach seems to apply, at least in principle, to links with a higher number of components:  we illustrate this in Theorem \ref{thm5compclass} with the case of \emph{algebraically split} $5$-component links (that is, $5$-component links with vanishing linking numbers).

\

The notion of \emph{clasper} was developed by Habiro in \cite{HabiroClasp}. These are surfaces in 3–manifolds with some additional structure, on which surgery operations can be performed. In \cite{HabiroClasp}, Habiro describes the clasper calculus up to isotopy, which is a set of geometric operations on claspers that yield equivalent surgery results. It is well known to experts how clasper  calculus  can  be  refined  for  the  study  of  knotted  objects  up  to  link-homotopy  (see  for  example  \cite{Flemingyasuselfck,YasuharaAkiraSelfdelteq}). This \emph{homotopy clasper calculus}, which we review in Section \ref{sectionclaspcalc}, will be the key tool for proving all the main results outlined above.

\

The rest of this paper consists of three sections.

In Section \ref{sectionclaspcalc}, we review the homotopy clasper calculus: after briefly recalling from \cite{HabiroClasp} Habiro’s clasper theory, we recall how a fundamental lemma from \cite{Flemingyasuselfck}, combined with Habiro’s work, produces a set of geometric operations on claspers having link-homotopic surgery results.

Section \ref{sectionbraid} is dedicated to the study of braids up to link-homotopy. We start by reinterpreting braids in terms of claspers. In Section \ref{sectioncombclasp} we define \emph{comb-claspers}, a family of claspers corresponding to braid commutators. They are next used to define a \emph{normal form} on homotopy braids, thus allowing us to rewrite any braid as an ordered product of comb-claspers. In Section \ref{sectionalg} after a short algebraic interlude, we give a presentation of the \emph{pure homotopy braid group} (Corollary \ref{coropresentation}), using the work of \cite{GoldsmithHomotopybraids} and \cite{MurasugiKunioKurpitaStudyofbraid} as well as the technology of claspers. Finally, we define and study in Section \ref{sectionrpz} a representation of the homotopy braid group which is in a sense the linearization of the homotopy Artin representation. We give its explicit computation in Theorem \ref{thmcalculgamma} (see also Example \ref{excalculgamma3comp} for the 3-strand case) and show its injectivity in Theorem \ref{thminj}. Moreover, from the injectivity of the representation follows the uniqueness of the normal form and thus the definition of the \emph{clasp-numbers}, a collection of braid invariant up to link-homotopy. Note that our representation has lower dimension than Humphries one. The correspondence between the two representations has not been established yet, but we wonder if our representation could open new leads on the torsion problem for more than six strands. 

The final Section \ref{sectionlinks} focuses on the study of links up to link-homotopy. The method used is based on the precise description of some operations, which generate the algebraic equivalence relation mentioned above in the classification result of Habegger and Lin \cite{HabeggerLinHomotopy}; we provide them with a topological description in terms of claspers. This new point of view allows us, for a small number of components, to describe when two braids in normal form have link-homotopic closures. We translate in terms of \emph{clasp-number variations} the action of those operations on the normal form. In this way we recover the classification results of Milnor \cite{MilnorLinkgrp} and Levine \cite{Levine4comp} for 4 or less components (Theorem \ref{thm4compclass}). Moreover, we also classify 5-component algebraically split links up to link-homotopy (Theorem \ref{thm5compclass}).

\medskip\emph{Acknowledgement}: The author thanks the referee for his/her careful reading and insightful suggestions. This work is partially supported by the project AlMaRe (ANR-
19-CE40-0001-01) of the ANR. The author thanks P. Bellingeri  and J.B. Meilhan for their great advises and helpful discussions. 

\section{Clasper calculus up to link-homotopy}\label{sectionclaspcalc}

Clasper calculus has been developed by Habiro in \cite{HabiroClasp} in the context of \emph{tangles} up to isotopy. Claspers turn out to be in fact a powerful tool to deal with link-homotopy. In this section we first define claspers and their associated vocabulary. Then we describe how to handle claspers up to link-homotopy.
\subsection{General definitions}

Let $M$ be a smooth compact and oriented $3$-manifold.
\begin{defn}\label{deftangle}
An \emph{$n$-component tangle} in $M$ is a smooth embedding of an $n$-component ordered and oriented $1$-manifold (a disjoint union of circles and intervals) 
into $M$. 
\begin{itemize}
    \item We say that two tangles are $\emph{isotopic}$ if they are related by an ambient isotopy of $M$ that fixes the boundary.
    \item We say that two tangles are \emph{$link$-$homotopic$} if there is a homotopy between them fixing the boundary, and such that the distinct components remain disjoint during the deformation.
\end{itemize}
\end{defn}
\begin{defn}\label{clasper}
A disk $T$ smoothly embedded in $M$ is called a \emph{clasper for a tangle} $\theta$ if it satisfies the following three conditions:
\begin{enumerate}[-]
\item $T$ is the embedding of a connected thickened uni-trivalent graph with a cyclic order at each trivalent vertex. Thickened univalent vertices are called \emph{leaves}, and thickened trivalent vertices, \emph{nodes}. \item $\theta$ intersects $T$ transversely, and the intersection points are in the interior of the leaves of $T$. 
\item Each leaf intersects $\theta$ in at least one point.
\end{enumerate}
\end{defn}
Diagrammatically a clasper is represented by a uni-trivalent graph corresponding to the one to be thickened. The trivalent vertices are thickened according to Figure \ref{claspdiag}. On the univalent vertices we specify how the corresponding leaves intersect $\theta$, and we also indicate how the edges are twisted using markers called $twists$ (see Figure \ref{claspdiag}).
\begin{figure}[!htbp]
    \centering
    \includegraphics[scale=0.55]{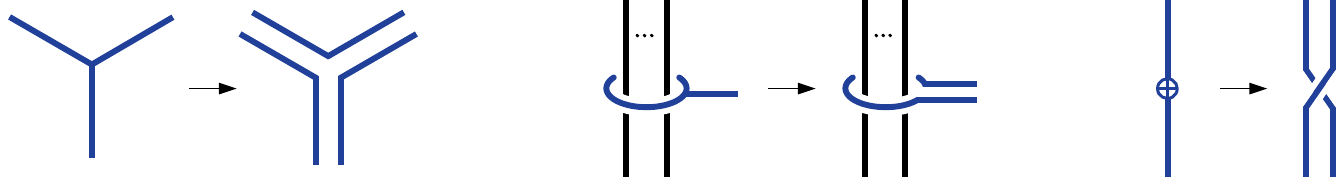}
    \caption{Local diagrammatic representation of claspers.}
    \label{claspdiag}
\end{figure}
\begin{defn}
Let $T$ be a clasper for a tangle $\theta$. We define the $\emph{degree}$ of $T$ denoted $\degg(T)$ as its number of nodes plus one, or equivalently its number of leaves minus one. The $\emph{support}$ of $T$ denoted $\supp(T)$ is defined to be the set of the components of $\theta$ that intersect $T$. 
\end{defn}
\begin{defn}
A clasper $T$ for a tangle $\theta$ is said to be \emph{simple} if every leaf of $T$ intersects $\theta$ exactly once. A leaf of a simple clasper intersecting the $l$-th component is called an \emph{$l$-leaf}.
\end{defn}
\begin{defn}
We say that a simple clasper $T$ for a tangle $\theta$ \emph{has repeats} if it intersects a component of $\theta$ in at least two points. 
\end{defn}

Given a disjoint union of claspers  $F$ for a tangle $\theta$, there is a procedure called $surgery$ detailed in \cite{HabiroClasp} to construct a new tangle denoted $\theta^F$. We illustrate on the left-hand side of Figure \ref{figsurgery} the effect of a surgery on a clasper of degree one. Now if $F$ contains some claspers with degree higher than one, we first apply the rule shown on the right-hand side of Figure \ref{figsurgery}, at each trivalent vertex: this breaks up $F$ into a disjoint union of degree one claspers, on which we can perform surgery. 
\begin{figure}[!htbp]
      \centering
      \includegraphics[scale=0.55]{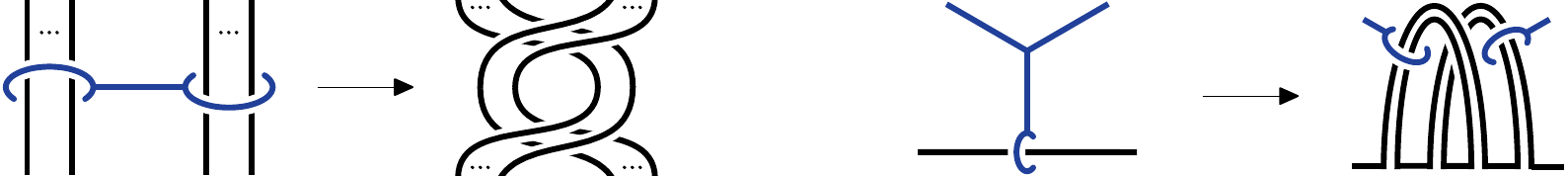}
      \caption{Rules of clasper surgery.}
      \label{figsurgery}
\end{figure}

Note that clasper surgery commutes with ambient isotopy. More precisely for $i$ an ambient isotopy and $F$ a disjoint union of claspers for a tangle $\theta$ we have that $i(\theta^{F})=(i(\theta))^{i(F)}$. This is an elementary example of $clasper\ calculus$, which refers to the set of operations on unions of a tangles with some claspers, that allow to deform one into another with isotopic surgery result. These operations are developed in \cite{HabiroClasp}, and we give in the next section the analogous calculus up to link-homotopy.

\subsection{Clasper calculus up to link-homotopy.}

In the whole section, $T$ and $S$ denote simple claspers for a given tangle $\theta$. We use the notation $T\sim S$, and say that \emph{$T$ and $S$ are link-homotopic} when the surgery results $\theta^{T}$ and $\theta^{S}$ are so. For example if $i$ is an ambient isotopy that fixes $\theta$, then $T\sim i(T)$. Moreover, if $\theta^T$ is link-homotopic to $\theta$, we say that $T$ \emph{vanishes up to link-homotopy} and we denote $T\sim \emptyset$.

We begin by recalling a fundamental lemma from \cite{Flemingyasuselfck}; more precisely, the next result is the case $k=1$ of \cite[Lemma 1.2]{Flemingyasuselfck}, where self $C_1$-equivalence corresponds to link-homotopy. 
\begin{lem}\cite[Lemma 1.2]{Flemingyasuselfck}\label{lemfeuilledouble}
If $T$ has repeats then $T$ vanishes up to link-homotopy.
\end{lem}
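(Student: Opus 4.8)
The plan is to induct on $\degg(T)$, the geometric heart being the degree-one case, and to use Habiro's isotopy-level clasper calculus \cite{HabiroClasp} to localize the repeat so that the only genuinely new ingredient — that a self-crossing change of a single component is permitted up to link-homotopy — can be applied where needed.

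For the base case $\degg(T)=1$, the clasper has exactly two leaves, so the hypothesis that $T$ has repeats forces both leaves to meet a single component $\theta_l$, say along two arcs $s_1$ and $s_2$. By the left-hand rule of Figure \ref{figsurgery}, surgery along $T$ replaces $s_1\cup s_2$ by a local clasp of these two arcs. Since $s_1$ and $s_2$ are arcs of one and the same component, this clasp is undone by a single self-crossing change of $\theta_l$, which is allowed up to link-homotopy; hence $\theta^{T}\sim\theta$, i.e. $T\sim\emptyset$.

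For the inductive step, suppose $\degg(T)\geq 2$ and that two leaves $f_1,f_2$ of $T$ meet $\theta_l$. Using Habiro's moves (which preserve the surgery result up to isotopy, a fortiori up to link-homotopy) I would first slide $f_2$ along $\theta_l$ — an ambient isotopy that stays inside a tubular neighbourhood of $\theta_l$ and therefore never meets a distinct component — until $f_1,f_2$ grab two parallel adjacent arcs $s_1,s_2$ of $\theta_l$ contained in a small ball $B$ with $B\cap\theta=s_1\cup s_2$. A self-crossing change of $\theta_l$ inside $B$ then lets the two arcs pass freely through one another, so that one of the two leaves can be cancelled against the other at the cost of lowering the degree. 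Each resulting piece either still carries two arcs of $\theta_l$, hence still has a repeat and vanishes by the induction hypothesis; or it acquires a leaf bounding a disk disjoint from $\theta$ and from the rest of the clasper, hence has trivial surgery by the corresponding basic move of \cite{HabiroClasp}; or it is a degree-one clasp of $s_1$ with $s_2$, removed by a self-crossing change exactly as in the base case. Collecting these contributions gives $\theta^{T}\sim\theta$.

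The main obstacle is precisely this localization and the accompanying bookkeeping: one must carry the two repeated leaves into the standard position inside $B$ using only moves that are legal up to link-homotopy — never forcing $\theta_l$ across a distinct component — and then check that the expansion distributes the repeat as claimed, so that every correction term falls under one of the three cases above. This is the step in which the hypothesis that the two leaves lie on the \emph{same} component is essential: it is what makes the undoing crossing changes self-crossings of $\theta_l$, invisible to link-homotopy, rather than illegal crossings between two distinct components.
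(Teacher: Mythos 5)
First, note that the paper never proves Lemma \ref{lemfeuilledouble}: it is imported as the case $k=1$ of \cite[Lemma 1.2]{Flemingyasuselfck}, so your attempt has to be measured against the standard argument in the literature rather than against anything in the text. Your base case is correct: surgery along a degree-one clasper whose two leaves grab the same component $\theta_l$ inserts a clasp between two arcs of $\theta_l$, and a single self-crossing change undoes it. Your preliminary sliding step is also essentially sound, though for a reason you only partially give: edges of a clasper are disjoint from $\theta$ by definition, so a sufficiently thin tube around the sliding arc of $\theta_l$ meets nothing but $\theta_l$, and the leaf together with its trailing edge can be dragged inside that tube. It is not enough to observe that the tube avoids the \emph{other components}; one must also avoid the rest of $T$ and re-route the dragged edge.

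The genuine gap is the core of the inductive step. The operation you invoke, ``one of the two leaves can be cancelled against the other at the cost of lowering the degree'', is not a move of clasper calculus, and you never specify the decomposition that produces your ``resulting pieces''. A self-crossing change of $\theta_l$ performed in a ball disjoint from $T$ commutes with the surgery but does not interact with the tree $T$ at all: it can neither split $T$ nor lower its degree. Conversely, the moves that do manipulate $T$ --- Habiro's moves, which you correctly restrict to the isotopy level --- create correction terms of strictly \emph{higher} degree ($\degg(T)+1$ or more), which an induction running upward on the degree can never absorb; and you may not discard those terms on the grounds that they ``still have a repeat'', since that is exactly the lemma being proved (indeed, Corollary \ref{lemclaspcalculus} is derived from Lemma \ref{lemfeuilledouble}, not the other way round). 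Localizing the two repeated leaves in a ball cannot dissolve this difficulty, because the other leaves of $T$ still grab other components elsewhere: the content of the lemma is global. The missing ingredient is a description of what surgery along a higher-degree tree actually does to the tangle: it is a finger move on the component through one chosen leaf, the finger tracing an iterated commutator of meridians of the strands through the remaining leaves (for degree one this is the clasp of Figure \ref{figsurgery}, for degree two the Borromean pattern; compare Proposition \ref{propclapscomu}, where comb-claspers are identified with iterated commutators). Rooting this description at $f_1$, the word traced by the finger of $\theta_l$ contains the meridian of the strand $s_2\subset\theta_l$ grabbed by $f_2$; self-crossing changes of $\theta_l$ delete every occurrence of that letter, and the remaining word freely reduces to the trivial word, so the finger retracts by an isotopy meeting no other component. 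That commutator argument (or a correct induction built on it) is what the proof requires, and it is precisely what your ``cancellation'' step leaves unproved; it is mathematical content, not bookkeeping.
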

It is well known to the experts that combining Lemma \ref{lemfeuilledouble} with the proofs of Habiro's technical results on clasper calculus \cite{HabiroClasp}, yields the following \emph{link-homotopy clasper calculus}.\footnote{Those moves are contained in \cite{YasuharaAkiraSelfdelteq} and \cite{JBYasuMilnorHOMPFLYT} together with \cite{Flemingyasuselfck}.} 
\begin{prop}\cite[Proposition 3.23, 4.4, 4.5 and 4.6]{HabiroClasp}\label{lemclaspcalculus}
We have the following link-homotopy equivalences (illustrated in Figure \ref{clasperscalculus}).
\begin{enumerate}[(1)]
    \item If $S$ is a parallel copy of $T$ which differs from $T$ only by one twist (positive or negative), then $S\cup T\sim\emptyset$.
    \item If $T$ and $S$ have two adjacent leaves and if $T'\cup S'$ is obtained from $T\cup S$ by exchanging these leaves as depicted in (2) from Figure \ref{clasperscalculus}, then $T\cup S\sim T'\cup S'\cup \tilde T$, where $\tilde T$ is as shown in the figure.
    \item If $T'$ is obtained from $T$ by a crossing change with a strand of the tangle $\theta$ as depicted in (3) from Figure \ref{clasperscalculus}, then $T\sim T'\cup \tilde T$, where $\tilde T$ is as shown in the figure.
    \item If $T'\cup S'$ is obtained from $T\cup S$ by a crossing change between one edge of $T$ and one of $S$ as depicted in (4) from Figure \ref{clasperscalculus}, then $T\cup S\sim T'\cup S'\cup \tilde T$, where $\tilde T$ is as shown in the figure.
    \item If $T'$ is obtained from $T$ by a crossing change between two edges of $T$ then $T\sim T'$.
\end{enumerate}
\begin{figure}[!htbp]
    \centering
    \includegraphics[scale=0.70]{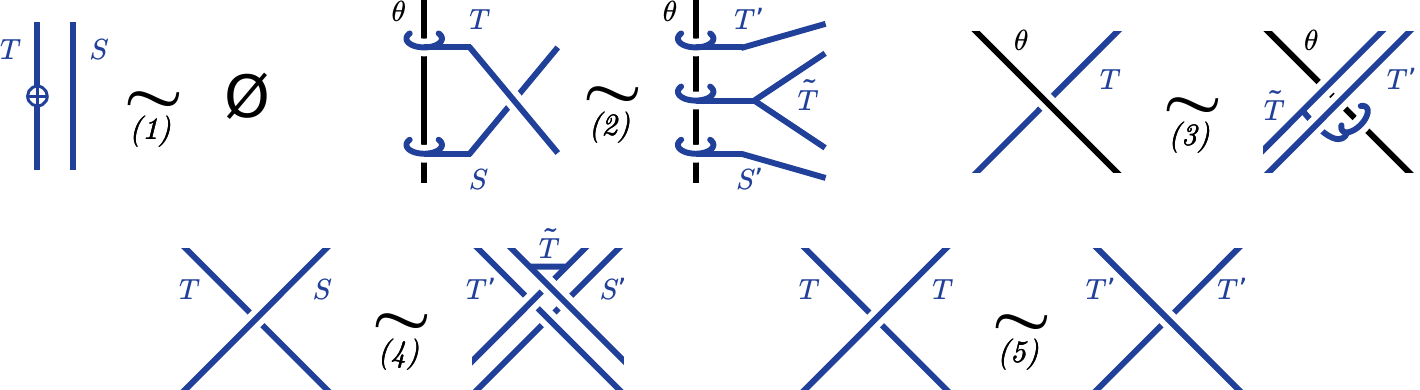}
    \caption{Basic clasper moves up to link-homotopy.}
    \label{clasperscalculus}
\end{figure}
\end{prop}
\begin{proof}[Idea of proof]
The result of \cite{HabiroClasp} used here are up to \emph{$C_k$-equivalence}, that is, up to claspers of degree up to $k$. The key observation is that, by construction, all such higher degree claspers have same support as the initial ones, hence they are claspers with repeats. Lemma \ref{lemfeuilledouble} then allows to delete them up to link-homotopy.
\end{proof}

\begin{rmq}\label{rmqclaspcalculus}
Lemma \ref{lemfeuilledouble} combined with Proposition \ref{lemclaspcalculus} gives us some further results:
\begin{enumerate}[-]
    \item First, statement $(4)$ implies that if $|\supp(T)\cap \supp(S)|\geq1$ then we can realize crossing changes between the edges of $T$ and $S$.
    \item Moreover, if $|\supp(T)\cap \supp(S)|\geq2$ thanks to statement (2) we can also exchange the leaves of $T$ and $S$.
    \item Furthermore, statement (3) allows crossing changes between $T$ and a component of $\theta$ in the support of $T$
\end{enumerate}
Indeed, in each case the clasper $\tilde T$ involved in the corresponding statement has repeats and can thus be deleted up to link-homotopy.
\end{rmq}

The next remark describes how to handle twists up to link-homotopy.
 
\begin{rmq}\label{lemtwists}
We have the following link-homotopy equivalences (illustrated in Figure \ref{lemtwistsfigure}).
\begin{enumerate}[(1)]
\setcounter{enumi}{5}
     \item If $T'$ is obtained from $T$ by moving a twist across a node then $T\sim T'$.
     \item If $T$ and $T'$ are identical outside a neighborhood of a node, and if in this neighborhood $T$ and $T'$ are as depicted in (8) from Figure \ref{lemtwistsfigure}, then $T\sim T'$.
\end{enumerate}
\begin{figure}[!htbp]\label{figtwists}
    \centering
    \includegraphics[scale=0.70]{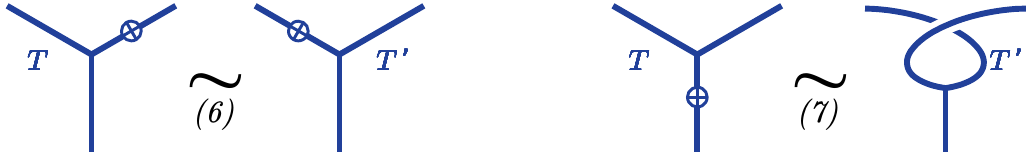}
    \caption{How to deal with twist up to link-homotopy.}
    \label{lemtwistsfigure}
\end{figure}
 \end{rmq}

\begin{rmq}\label{rmqtwist}

Remark \ref{lemtwists} allows us to bring all the twists on a same edge and then cancel them pairwise. Therefore we can consider only claspers with one or no twist.
\end{rmq}

Proposition \ref{lemclaspcalculus} together with Remark \ref{lemtwists} give us most of the necessary tools to understand clasper calculus up to link-homotopy. The missing ingredient is the relation IHX which we give in the following proposition.

\begin{prop}\label{lemIHX}\cite{HabiroClasp}
Let $T_I$, $T_H$, $T_X$ be three parallel copies of a given simple clasper that coincide everywhere outside a $3$-ball, where they are as shown in Figure \ref{IHX}. Then $T_I\cup T_H\cup T_X\sim\emptyset$. We say that $T_I$, $T_H$ and $T_X$ \emph{verify the IHX relation}.
\begin{figure}[!htbp]
    \centering
    \includegraphics[scale=0.8]{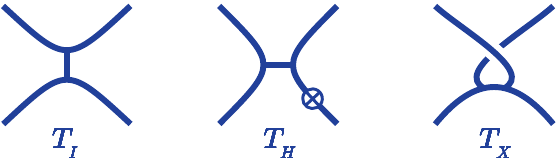}
    \caption{The IHX relation for claspers.}
    \label{IHX}
\end{figure}
\end{prop}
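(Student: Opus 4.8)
The plan is to mirror the strategy used for Corollary~\ref{lemclaspcalculus}: invoke Habiro's IHX relation in its original $C_{k+1}$-form and then eliminate the resulting higher-degree correction terms by a repeats argument. Recall that in \cite{HabiroClasp} the IHX relation is established up to $C_{k+1}$-equivalence, where $k=\degg(T_I)=\degg(T_H)=\degg(T_X)$; that is, the surgery result on $T_I\cup T_H\cup T_X$ agrees with that on $\emptyset$ modulo surgery on a finite collection of claspers of degree at least $k+1$. It thus suffices to show that each of these correction claspers vanishes up to link-homotopy.

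First I would dispose of the degenerate case. If $T_I$ has repeats, then so do its parallel copies $T_H$ and $T_X$, and Lemma~\ref{lemfeuilledouble} makes all three vanish up to link-homotopy, so the relation holds trivially. Hence I may assume that none of the three has repeats; being simple claspers of degree $k$ with $k+1$ leaves, they then all share a common support of size exactly $k+1$.

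The heart of the argument is an observation entirely analogous to the one underlying Corollary~\ref{lemclaspcalculus}: by construction Habiro's correction claspers are built locally from the edges and leaves of $T_I\cup T_H\cup T_X$, so their leaves meet $\theta$ only along components already in this common support. Consequently each correction clasper has support of size at most $k+1$. But such a clasper has degree at least $k+1$, hence at least $k+2$ leaves, so by the pigeonhole principle two of its leaves lie on the same component of $\theta$; the clasper therefore has repeats. Lemma~\ref{lemfeuilledouble} then guarantees that it vanishes up to link-homotopy, and summing over all correction terms yields $T_I\cup T_H\cup T_X\sim\emptyset$.

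The main obstacle I anticipate is justifying rigorously that the support of every correction clasper is contained in that of the originals. This is precisely the delicate point already flagged in the proof of Corollary~\ref{lemclaspcalculus}: one must trace through Habiro's explicit IHX construction to confirm that no intersections with components outside the common support are ever created. Once this containment is granted, the pigeonhole count together with Lemma~\ref{lemfeuilledouble} closes the argument immediately.
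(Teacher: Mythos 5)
Your proposal is correct and takes essentially the same approach the paper relies on: the paper gives no standalone proof of Proposition~\ref{lemIHX} (it cites \cite{HabiroClasp}), and the intended justification is exactly the mechanism spelled out in the ``Idea of proof'' of Corollary~\ref{lemclaspcalculus} --- Habiro's relation holds up to higher-degree correction claspers whose support is contained in that of the originals, so they have repeats and are deleted by Lemma~\ref{lemfeuilledouble}. Your explicit pigeonhole count (degree $\geq k+1$ forces $\geq k+2$ leaves on $\leq k+1$ components) is just a cleanly stated version of that same argument.
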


\section{Braids up to link-homotopy.}\label{sectionbraid}

This section is dedicated to braids up to link-homotopy. Our main result is a representation of the homotopy braid group, defined and studied using clasper calculus. In the next two subsections we introduce the main tools for this result: first the notion of comb-claspers for braids, that yields a normal form result up to link-homotopy, and next their algebraic counterpart with the family $\mathcal F$.

\subsection{Braids and comb-claspers.}\label{sectioncombclasp}

Let $D$ be the unit disk with $n$ fixed points $\{p_i\}_{i\leq n}$ on a diameter $\delta$, and $I$ the unit interval $[0,\ 1]$. Set also $I_1,\ \ldots,\ I_n$, $n$ copies of $I$, and $\underset{i\leq n}\bigsqcup I_i$ their disjoint union. 
\begin{defn}\label{braiddef}
An \emph{$n$-component braid} $\beta=(\beta_1,\ \ldots,\ \beta_n)$ is a smooth proper embedding  \[(\beta_1,\ \ldots,\ \beta_n) :\underset{i\leq n}{\bigsqcup}I_i \to D\times I\] 
such that $\beta_i(0)=(p_i,\ 0)$ and $\beta_i(1)=(p_{\pi(\beta)(i)},\ 1)$ with $\pi(\beta)$ some permutation of $\{1,\ \ldots,\ n\}$ associated to $\beta$. We also require the embedding to be monotonic, which means that $\beta_i(t)\in D\times \{t\}$ for any $t\in[0,\ 1]$. We call (the image of) $\beta_i$ the \emph{$i$-th component} of $\beta$. We say that a braid is \emph{pure} if its associated permutation is the identity.
\end{defn}
We emphasize that the braids are here oriented from top to bottom.

The set of braids up to ambient isotopy, resp. link-homotopy, equipped with the stacking operation forms a group: the \emph{braid group} denoted by $B_n$, resp. the \emph{homotopy braid group}, denoted by $\tilde B_n$. Elements of $\tilde B_n$ are called \emph{homotopy braids}. The set of pure braids up to isotopy, resp. link-homotopy, forms a subgroup of $B_n$, resp. $\tilde B_n$, denoted by $P_n$, resp. $\tilde P_n$. Note that we do not require isotopy or link-homotopy to preserve the monotonic property during the deformation.

\begin{rmq}\label{rmqstringlink}
Braids are tangles without closed components, and with boundary and monotonic conditions. 
But any (pure) tangle without closed components is link-homotopic to a (pure) braid (in the pure case, such tangles are called string links in the literature). Thus, when regarding braids up to link-homotopy we can freely consider them as tangles, i.e. we can forget the monotonic condition. 
This is useful from the clasper point of view since clasper surgery does not respect this condition in general. 
\end{rmq}
 
We introduce next \emph{comb-claspers} and their associated notation. Consider the usual representative $\mathbf 1$ of the trivial $n$-component braid given by $\mathbf1_i=\{p_i\}\times I$ for $i\in\{1,\ \ldots,\ n\}$. Denote by $(D\times I)^+$ and $(D\times I)^-$ the two half-cylinders determined by the plane $\delta\times I$, where $\delta$ is the fixed diameter on $D$. In figures, we choose $(D\times I)^+$ to be above the plane of the projection.

\begin{defn}\label{defcombclasp}
We call \emph{comb-clasper} a simple clasper without repeats for the trivial braid such that:
\begin{enumerate}[-]
    \item Every edge is in $(D\times I)^+$.
    \item The minimal path running from the smallest to the largest component of the support meets all nodes.
    \item At each node, the edge that does not belong to the minimal path leaves “to the left" as locally depicted in Figure \ref{figedgeleft}.
\end{enumerate}
\end{defn}
\begin{figure}[!htbp]
    \centering
    \includegraphics[scale=0.7]{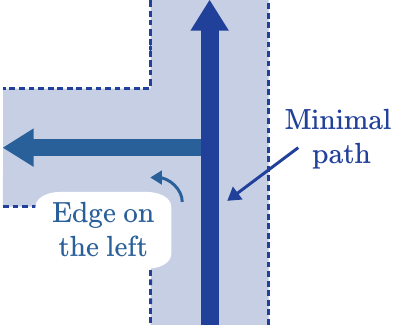}
    \caption{Local orientation at each node of a comb-clasper.}
    \label{figedgeleft}
\end{figure}
An example is given in Figure \ref{claspeigne}.

\

The second condition of Definition \ref{defcombclasp} implies that every node is related (by an edge and a leaf) to a component of $\mathbf 1$ that is not the smallest or the largest of the support. Using that, we can order the support of a comb-clasper: we start with the smallest component, then we order the components according to the order in which we meet them along the minimal path, and finally, we end with the largest one. For example in Figure \ref{claspeigne} the ordered support is $\{1,\ 2,\ 6,\ 4,\ 5,\ 8\}$. 

\

Once the ordered support $\{i_1,\ i_2,\ \ldots,\ i_l\}$ is fixed, the only remaining indeterminacy in a comb-clasper is the embedding of the edges in $(D\times I)^+$. This depends on the relative position of the edges, and on the number of twists on each of them. However, up to link-homotopy the relative position of the edges is irrelevant (by move (5) from Proposition \ref{lemclaspcalculus}). Besides, by Remark \ref{rmqtwist}, we can always suppose that a comb-clasper contains either one or no twist; moreover by Remark \ref{lemtwists} we can freely assume that the potential twist is located on the  edge connected to the $i_l$-th component. We can thus unambiguously (up to link-homotopy) denote by $(i_1,i_2,\cdots,i_l)$ the comb-clasper with such a twist and by $(i_1,i_2,\cdots,i_l)^{-1}$ the untwisted one; we call them respectively \emph{twisted} and \emph{untwisted} comb-claspers. For example the twisted comb-clasper $(126458)$ is illustrated in Figure \ref{claspeigne}.

\begin{figure}[!htbp]
    \centering
    \includegraphics[scale=0.8]{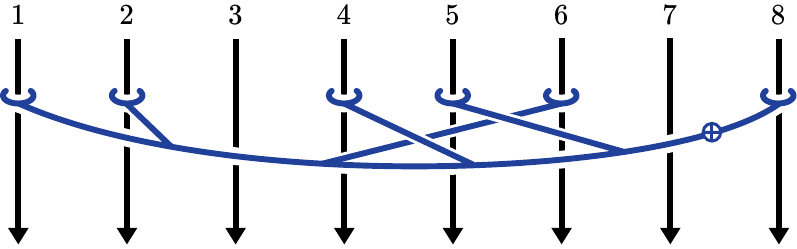}
    \caption{The twisted comb-clasper $(126458)$.}
    \label{claspeigne}
\end{figure}

In what follows we blur the distinction between comb-claspers and the result of their surgery up to link-homotopy. From this point of view a comb-clasper is a pure homotopy braid and the product $(\alpha)(\alpha')$ of two comb-claspers is the product $\mathbf1^{(\alpha)}\mathbf1^{(\alpha')}$. In particular according to move (1) from Proposition \ref{lemclaspcalculus} the inverse of a comb-clasper $(\alpha)$ is given by $(\alpha)^{-1}$.

\begin{defn}
We say that a pure homotopy braid $\beta\in\tilde P_n$ given by a product of comb-claspers\\ $\beta=(\alpha_1)^{\pm1}(\alpha_2)^{\pm1}\cdots(\alpha_m)^{\pm1}$ is :
    \begin{itemize}
    \item \emph{stacked} if $(\alpha_i)=(\alpha_j)$ for some $i\leq j$ implies that $(\alpha_i)=(\alpha_k)$ for any $i\leq k\leq j$,
    \item \emph{reduced} if it contains no redundant pair i.e. two consecutive factors are not the inverse of each other.
    \end{itemize}
    If $\beta$ is reduced and stacked we can then rewrite $\beta=\prod(\alpha_i)^{\nu_i}$ for some integers $\nu_i$ and with $(\alpha_i)\neq(\alpha_j)$ for any $i\neq j$. Moreover, given an order on the set of twisted comb-claspers, we say that a reduced and stacked writing is a \emph{normal form of $\beta$ for this order} if $(\alpha_i)\leq(\alpha_j)$ for all $i\leq j$.
\end{defn}
We stress that the notion of normal form is relative to a given order on the set of twisted comb-claspers. The following example will be relevant for Section \ref{sectionlinks}.
\begin{ex}\label{exnormformclasp}
Given two twisted comb-claspers $(\alpha)=(i_1\cdots i_l)$ and $(\alpha')=(i'_1\cdots i'_{l'})$ we can choose the order  $(\alpha)\leq(\alpha')$ defined by:
\begin{itemize}
    \item $\max(\supp(\alpha))<\max(\supp(\alpha')),$ or
    \item $\max(\supp(\alpha))=\max(\supp(\alpha'))$ and $\degg(\alpha)<\degg(\alpha'),$ or
    \item $\max(\supp(\alpha))=\max(\supp(\alpha'))$ and $\degg(\alpha)=\degg(\alpha')$ and $i_1\ldots i_l<_{\lex}i'_1\ldots i'_l,$
\end{itemize} 
where $<_{\lex}$ denotes the lexicographic order.
With respect to this order the normal form of an element $\beta\in\tilde P_4$ is given by 12 integers $\{\nu_{12},\ \ldots,\ \nu_{1324}\}$ as follows:
$$\beta=(12)^{\nu_{12}}(13)^{\nu_{13}}(23)^{\nu_{13}}(123)^{\nu_{123}}(14)^{\nu_{14}}(24)^{\nu_{24}}(34)^{\nu_{34}} (124)^{\nu_{124}}(134)^{\nu_{134}}(234)^{\nu_{234}}
(1234)^{\nu_{1234}}(1324)^{\nu_{1324}}.$$
\end{ex}

The next result is shown by the same arguments as for Theorem 4.3 of \cite{YasuharaAkiraSelfdelteq}.\footnote{Although a different notion of comb-clasper is used in \cite{YasuharaAkiraSelfdelteq}, the strategy of proof is strictly the same.}
\begin{thm}\label{normalformex}\cite[Theorem 4.3]{YasuharaAkiraSelfdelteq}
Any pure homotopy braid $\beta\in\tilde P_n$ can be expressed in a normal form, for any order on the set of twisted comb-claspers.
\end{thm}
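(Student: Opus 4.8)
The plan is to prove existence of the normal form in two stages: first establish that $\beta$ can be written as \emph{some} product of comb-claspers, and then massage that product into reduced stacked form for the prescribed order. The first stage is essentially free: any pure homotopy braid $\beta$ is the surgery result $\mathbf1^T$ of a single simple clasper $T$ for the trivial braid (split $T$ into degree-one pieces if needed, or realize $\beta$ as an iterated commutator), and by Lemma \ref{lemclasparecombclaps} this is link-homotopic to a product of comb-claspers $\beta=(\alpha_1)^{\pm1}\cdots(\alpha_m)^{\pm1}$. So from the outset I may assume $\beta$ is given as such a product.

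The second stage is the combinatorial heart of the argument: I want to sort this product so that the comb-claspers appear in nondecreasing order, and to merge and cancel equal factors. The key geometric input is commutation. Given an order on comb-claspers, I would repeatedly swap two adjacent factors $(\alpha_i)^{\pm1}(\alpha_{i+1})^{\pm1}$ that are out of order. Swapping the underlying surgery results is exactly a sequence of crossing changes between the edges of the two claspers and between a clasper edge and the strands of $\theta$; by Remark \ref{rmqclaspcalculus}, each such crossing change produces only an auxiliary clasper $\tilde T$ of strictly higher degree (its support is the union of the two supports, so it has repeats or is of larger degree). The crucial point is that $\tilde T$, after applying Lemma \ref{lemclasparecombclaps}, is itself a product of comb-claspers of degree strictly greater than $\degg(\alpha_i),\degg(\alpha_{i+1})$. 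Since the number of components $n$ is fixed, comb-claspers have bounded degree, so this commutator-generation process terminates: at the top degree there are no higher-degree corrections and the sorting is exact. I would therefore run the sorting by downward induction on degree, or equivalently set up a well-founded measure (lexicographic on the multiset of out-of-order adjacent pairs, refined by the fact that newly created factors have strictly larger degree).

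Once the factors are sorted into nondecreasing order for the given order, equal comb-claspers are automatically adjacent, which yields the stacked condition; collecting each block $(\alpha_i)^{\pm1}\cdots(\alpha_i)^{\pm1}$ into a single power $(\alpha_i)^{\nu_i}$ uses move (1) of Corollary \ref{lemclaspcalculus} (a comb-clasper and its one-twist-different parallel copy cancel), which also effects the reduction by deleting redundant inverse pairs. This gives precisely $\beta=\prod(\alpha_i)^{\nu_i}$ with distinct $(\alpha_i)$ arranged in order, i.e.\ a normal form.

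The main obstacle I anticipate is controlling the correction terms $\tilde T$ carefully enough to guarantee termination. One must check two things: that each $\tilde T$ produced by a swap or crossing change genuinely has degree strictly larger than the factors being swapped (so that the induction on degree is legitimate), and that rewriting $\tilde T$ as comb-claspers via Lemma \ref{lemclasparecombclaps} does not reintroduce low-degree factors. Both follow because the clasper-calculus moves in Corollary \ref{lemclaspcalculus} and the proof of Lemma \ref{lemclasparecombclaps} only ever raise degree, and the bounded number of strands caps the degree — but the bookkeeping of the well-founded ordering that makes the sorting provably halt is the delicate part and is where I would spend the most care.
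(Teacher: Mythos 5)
Your proposal is correct and follows essentially the same route as the paper: express $\beta$ as a product of degree-one comb-claspers, sort the factors with moves (2) and (4) of Corollary~\ref{lemclaspcalculus}, absorb the correction terms as strictly higher-degree comb-claspers via Lemma~\ref{lemclasparecombclaps}, and terminate the degree-by-degree sweep because claspers of degree exceeding the number of strands vanish by Lemma~\ref{lemfeuilledouble}. The only blemish is your opening claim that $\beta=\mathbf1^T$ for a \emph{single} simple clasper, which is false in general but also unnecessary: as in the paper, it suffices to note that $\beta$ is a product of the generators $A_{ij}=\mathbf1^{(ij)}$, hence already a product of comb-claspers.
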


\subsection{Algebraic counterpart.}\label{sectionalg}
\subsubsection{Reduced group and commutators.}

For any $a,b$ in a group we denote $[a,b]:=aba^{-1}b^{-1}$.
\begin{defn}\label{defredgrp}
Let $G$ be a group generated by $\{x_1,\ \ldots,\ x_n\}$. We define $J_G\triangleleft G$ to be the normal subgroup generated by elements of the form $[x_i,\lambda x_i \lambda^{-1}]$, for all $i\in\{1,\ \ldots,\ n\}$, and for all $\lambda\in G$. We call \emph{reduced quotient}, the quotient $G/J_G$ and we denote it by $\mathcal RG$. This definition depends on the choice of the generators $\{x_1,\ \ldots,\ x_n\}$. 
\end{defn}
In what follows we work essentially with the free group $F_n$ on $n$ generators $x_1,\ \ldots,\ x_n$. The reduced quotient $\mathcal RF_n=F_n/J$ of the free group is called \emph{reduced free group}, where $J:=J_{F_n}$.
\begin{defn}
A \emph{commutator in $x_1 ,\ \ldots,\ x_n$ of weight $k$} $(k\geq1)$ is an element of $F_n$ defined recursively, as follows:
\begin{itemize}
    \item The commutators of weight one are $x_1 ,\ \ldots,\ x_n$.
    \item The commutators of weight $k$ are words $[C _1,C _2 ]$ where $C_1,\ C_2$ are commutators verifying $k=\wg(C_1 )+\wg(C_2)$ where $\wg(C$) denotes the weight of $C$.
\end{itemize}
\end{defn}
\begin{defn}
We denote $\Occ_i (C) = r$ and we say that $x_i$ \emph{occurs $r$ times} in a commutator $C$ if one of the following holds:
\begin{itemize}
    \item If $C = x_j$, then $r = 1$ if $i = j$ and $r = 0$ if $i\neq j$.
    \item If $C = [C_1,C_2 ]$, then $r = \Occ_i (C_1 ) + \Occ_i (C_2 )$.
\end{itemize}
We say that a commutator $C$ \emph{has repeats} if $\Occ_i(C)>1$ for some $i$.
We call \emph{support} of the commutator $C$, the set of indices $i$ such that $\Occ_i(C)>0$ and we denote it $\supp(C)$.
\end{defn}

The following is a reformulation of Definition \ref{defredgrp} that is used throughout the paper.
\begin{prop}\label{propcommurep}
\cite[Proposition 3]{Levine4comp} The subgroup $J$ is generated by commutators in $x_1,\ \ldots,\ x_n$ with repeats. Hence these commutators are trivial in the reduced free group.
\end{prop}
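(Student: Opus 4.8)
The plan is to prove that $J$ coincides with the normal subgroup $K$ of $F_n$ generated by all commutators in $x_1,\dots,x_n$ with repeats; the final sentence is then immediate, since $K\subseteq J$ forces every commutator with repeats to be trivial in $\mathcal RF_n=F_n/J$. I would establish the equality $J=K$ by proving the two inclusions separately.

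For $J\subseteq K$, I would show that each normal generator $[x_i,\lambda x_i\lambda^{-1}]$ of $J$ lies in $K$. The starting point is the elementary identity $\lambda x_i\lambda^{-1}=x_i[x_i^{-1},\lambda]$, which, together with $[a,bc]=[a,b]\cdot b[a,c]b^{-1}$ and $[x_i,x_i]=1$, yields \[[x_i,\lambda x_i\lambda^{-1}]=x_i\,[x_i,[x_i^{-1},\lambda]]\,x_i^{-1}.\] Writing $\lambda$ as a word in the generators and expanding the right-hand side by the standard commutator-collection identities, one rewrites $[x_i,[x_i^{-1},\lambda]]$ as a product of conjugates of commutators in $x_1,\dots,x_n$. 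The point to watch is a bookkeeping one: every such commutator inherits both the outer occurrence of $x_i$ and the inner occurrence of $x_i^{-1}$, so $x_i$ occurs in it at least twice; hence each factor is (a conjugate of) a commutator with repeats, and $[x_i,\lambda x_i\lambda^{-1}]\in K$.

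For the reverse inclusion $K\subseteq J$, the content is that every commutator $c$ with repeats is trivial in $\mathcal RF_n$. I would first record two preliminary facts. First, since $J$ is normal and contains every $[x_i,\mu x_i\mu^{-1}]$, conjugating by $w$ (with $\mu=w^{-1}v$) shows $[wx_iw^{-1},vx_iv^{-1}]\in J$ for all $v,w$; thus any two conjugates of a fixed $x_i$ commute modulo $J$, so the image in $\mathcal RF_n$ of the normal closure $A_i:=\langle\langle x_i\rangle\rangle$ is abelian. Second, a straightforward induction on weight shows that any commutator whose support contains $i$ lies in $A_i$: if $c=[c_1,c_2]$ with $i\in\supp(c)$ then $i\in\supp(c_j)$ for some $j$, whence $c_j\in A_i$ by induction and $c\in A_i$ by normality of $A_i$. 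With these in hand I would argue by induction on the weight of $c$. Writing $c=[c_1,c_2]$ with $\mbox{Occ}_i(c)\geq 2$, either some $\mbox{Occ}_i(c_j)\geq 2$, in which case $c_j\in J$ by induction and $c\in J$ since $J$ is normal; or $\mbox{Occ}_i(c_1)=\mbox{Occ}_i(c_2)=1$, in which case both $c_1$ and $c_2$ lie in $A_i$, hence commute modulo $J$, giving $c=[c_1,c_2]\in J$.

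I expect the main obstacle to be the occurrence bookkeeping in the inclusion $J\subseteq K$: one must apply the commutator-collection identities to $[x_i,[x_i^{-1},\lambda]]$ in a way that guarantees no term ever loses one of its two occurrences of $x_i$, so that every commutator produced genuinely has repeats. By contrast, the conceptual heart of the reverse inclusion — that conjugates of a single generator commute in $\mathcal RF_n$, so that a commutator with a repeated generator collapses — reduces cleanly to the two preliminary facts together with the weight induction.
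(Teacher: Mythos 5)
The paper itself offers no proof of this statement: it is quoted directly from Levine \cite{Levine4comp}, so your attempt has to be measured against the standard argument rather than against anything in the text. Your reading of the statement --- that $J$ equals the normal closure $K$ of the commutators with repeats --- is the intended one, and your proof of the inclusion $K\subseteq J$ (the half that the ``Hence'' clause asserts, and the half the paper actually uses when it declares commutators with repeats trivial in $\mathcal RF_n$) is complete and correct: conjugating the defining relators shows that any two conjugates of a fixed $x_i$ commute modulo $J$, so the image of $A_i=\langle\langle x_i\rangle\rangle$ is abelian; any commutator with $i$ in its support lies in $A_i$; and your two-case induction on weight then closes cleanly.

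The gap is in $J\subseteq K$, and it is not where you expect it. The reduction to $[x_i,[x_i^{-1},\lambda]]$ is fine, but the sentence ``expanding by the standard commutator-collection identities, one rewrites this as a product of conjugates of commutators in $x_1,\dots,x_n$'' hides an infinite regress, not a bookkeeping issue. Expanding $[x_i^{-1},\lambda]$ gives a product of conjugates $w[x_l,x_i]^{\pm1}w^{-1}$; to commute $x_i$ past such a factor you must strip the conjugation, via $wcw^{-1}=[w,c]\,c$, and this produces a good factor (a conjugate of $[x_i,c]$, a genuine commutator with repeats) together with the residual factor $[x_i,[w,c]]$, in which $[w,c]$ expands into conjugates of the commutators $[x_m,c]$ of weight one higher --- that is, a term of exactly the same shape you started from, one weight up. The rewriting therefore never terminates on its own. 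What grounds it is a pigeonhole fact absent from your proposal: a commutator in $x_1,\dots,x_n$ of weight $>n$ must repeat some generator, hence already lies in $K$. The correct scaffolding is a downward induction on weight --- for instance, prove that $[c_1,uc_2u^{-1}]\in K$ for all commutators $c_1,c_2$ with $\supp(c_1)\cap\supp(c_2)\neq\emptyset$ and all words $u$, with base case $\wg(c_1)+\wg(c_2)>2n$ given by the pigeonhole, and inductive step using that $[c_1,c_2]$ itself is a commutator with repeats while the residual terms have strictly larger weight --- and then specialize to $c_1=c_2=x_i$. Without this termination mechanism, your argument for $J\subseteq K$ does not close.
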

The notion of \emph{basic commutators} was first introduced in \cite{Phall} and was further studied in \cite{LyndonRogerSchuppPaulCobigrpthry,Mhall,MagnusKarrassSolitar} to describe the lower central series of the free group. It was then naturally adapted in \cite{Levine4comp} to the framework of the reduced free group. In the next definition we set a well chosen family of commutators. This family will replace \emph{reduced basic commutators} from \cite{Levine4comp} and will follow us throughout the whole paper.
\begin{defn}\label{basiscommudef}
Let us define the following family of commutators without repeats in $\mathcal{R}F_n$: $$\mathcal F=\left\{[i_1,\ \ldots,\ i_l]\ \middle|\  i_1<i_k,\  2\leq k\leq l\right\}_{l\leq n}.$$
Here, we use the notation $[i_1,i_2,\cdots,\ i_l]:=[[\cdots[[x_{i_1},x_{i_2}],x_{i_3}],\cdots,x_{i_{l-1}}],x_{i_l}]$. This is a finite set and we can thus choose an arbitrary order on it, $\mathcal{F}=\{[\alpha_1],\ [\alpha_2],\ \ldots,\ [\alpha_m]\}$.
\end{defn}

\begin{ex}\label{exnormalformcommu}
For two commutators $[\alpha]=[i_1\cdots i_l]$ and $[\alpha']=[i'_1\cdots i'_{l'}]$ we can choose the order given by $[\alpha]\leq[\alpha']$ if:
\begin{itemize}
    \item $\wg(\alpha)<\wg(\alpha'),$ or
    \item $\wg(\alpha)=\wg(\alpha')$ and $i_1\ldots i_l<_{\lex}i'_1\ldots i'_l.$
\end{itemize}
With respect to this order the normal form of an element $\omega\in\mathcal RF_3=\langle x_1,\ x_2,\ x_3\rangle$ is given by 8 integers $\{e_{1},\ \ldots,\ e_{8}\}$ as follows:
\[\omega=[1]^{e_1}[2]^{e_2}[3]^{e_3}[12]^{e_4}[13]^{e_5}[23]^{e_6}[123]^{e_7}[132]^{e_8}.\] 
\end{ex}
The following theorem is a kind of reduced analogue of Hall's basis theorem \cite[Theorem 11.2.4]{Mhall}. It is to be compared with \cite[Proposition 6]{Levine4comp}, where a different family of commutators is used, see Remark \ref{rmqpasinj}. 
\begin{thm}\label{basiscommuthm}
For any word $\omega\in\mathcal RF_n$ there exists a unique ordered set of integers $\{e_1,\ \ldots,\ e_m\}$ associated to the ordered family of commutators $\mathcal{F}=\{[\alpha_1],\ [\alpha_2],\ \ldots,\  [\alpha_m]\}$ such that $$\omega=[\alpha_1]^{e_1}[\alpha_2]^{e_2}\cdots [\alpha_m]^{e_m}.$$
\end{thm}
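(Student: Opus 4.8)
The plan is to recognize Theorem \ref{basiscommuthm} as Hall's basis theorem \cite[Theorem 11.2.4]{Mhall} transported to the nilpotent quotient $\mathcal{R}F_n$, the real content being the identification of its associated graded Lie ring. I would first record that $\mathcal{R}F_n$ is nilpotent of class at most $n$: a commutator of weight at least $n+1$ in $x_1,\dots,x_n$ must repeat some generator, hence has repeats and is trivial in $\mathcal{R}F_n$, so $\gamma_{n+1}(\mathcal{R}F_n)=1$ and $\mathcal{R}F_n$ is a quotient of the free nilpotent group $F_n/\gamma_{n+1}(F_n)$. I would also note that $\mathcal F$ is homogeneous for the weight grading and that its weight-$l$ members of a fixed support $S$ with $|S|=l$ are exactly the left-normed brackets with smallest index first; there are $(l-1)!$ of them, matching the dimension of the multilinear component of the free Lie ring on $S$.

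For existence, I would lift $\omega$ to $F_n/\gamma_{n+1}(F_n)$, apply the Hall collection process to write it as an ordered product of powers of basic commutators of weight $\le n$, and project to $\mathcal{R}F_n$: recalling from \cite[Proposition 3]{Levine4comp} that $J$ is generated by the commutators with repeats, all basic commutators with repeats disappear, leaving a product of members of $\mathcal F$. To reach the prescribed (arbitrary) order, I would transpose adjacent factors one at a time; each swap costs only commutators of strictly larger weight, which can again be rewritten through $\mathcal F$, so by nilpotency the reordering terminates and yields a normal form.

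Uniqueness is the crux, and it amounts to the $\mathbb Z$-linear independence of the reduced basic commutators in each graded quotient $\gamma_k(\mathcal{R}F_n)/\gamma_{k+1}(\mathcal{R}F_n)$. To establish it I would use the reduced Magnus expansion: send $x_i\mapsto 1+X_i$ into $\mathbb Z\langle\langle X_1,\dots,X_n\rangle\rangle$ and compose with the projection killing every monomial in which some variable repeats. Writing $\mu([u,v])-1=(PQ-QP)\,p^{-1}q^{-1}$ with $P=\mu(u)-1$ and $Q=\mu(v)-1$, one sees that each relator $[x_i,\lambda x_i\lambda^{-1}]$ maps to $1$, since every term then contains $X_i$ at least twice; thus the expansion descends to a homomorphism on $\mathcal{R}F_n$ carrying $\gamma_k$ into degree $\ge k$ and inducing, on $\gamma_k/\gamma_{k+1}$, an additive map to the squarefree degree-$k$ part. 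A weight-$k$ element of $\mathcal F$ goes to the leading (multilinear) Lie term of its left-normed bracket, and these leading terms are $\mathbb Z$-independent because the left-normed brackets with smallest index first form a basis of the multilinear component of the free Lie ring. Hence each $\gamma_k/\gamma_{k+1}$ is free abelian on the weight-$k$ members of $\mathcal F$, and the standard filtration argument finishes: if $\prod[\alpha_i]^{d_i}=1$ with some $d_i\neq 0$, then taking the minimal weight $k$ with a nonzero exponent and reducing modulo $\gamma_{k+1}$ yields a nontrivial relation in the free abelian group $\gamma_k/\gamma_{k+1}$, a contradiction.

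The main obstacle is precisely this independence. Everything reduces to the classical Lie-theoretic fact that the $(l-1)!$ left-normed brackets $[x_{i_1},\dots,x_{i_l}]$ with $i_1=\min$ form a basis of the multilinear part of the free Lie ring, together with the verification that the reduced Magnus expansion both descends to $\mathcal{R}F_n$ and detects these leading terms over $\mathbb Z$, so that no torsion appears in the graded quotients. Granting this input, the collection and filtration steps are routine consequences of nilpotency.
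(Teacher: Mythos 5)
Your uniqueness argument is essentially sound and close to the paper's: both hinge on the reduced Magnus expansion (your computation showing that the relators $[x_i,\lambda x_i\lambda^{-1}]$ die is a cleaner packaging of the paper's verification). The difference is that you route independence through the graded quotients $\gamma_k/\gamma_{k+1}$ and the classical basis of the multilinear component of the free Lie ring, whereas the paper stays self-contained by extracting the coefficient of the single monomial $X^{\alpha}$ (Fact \ref{factcool}) and comparing it in the two decompositions directly, an argument that also handles an arbitrary ordering of $\mathcal F$ without any appeal to centrality or to lower-central-series structure.

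The genuine gap is in your existence step. You assert that Hall collection in $F_n/\gamma_{n+1}(F_n)$ followed by projection to $\mathcal RF_n$ ``leaves a product of members of $\mathcal F$'', because basic commutators with repeats die. That intermediate statement is false: repeat-free Hall basic commutators need not belong to $\mathcal F$. The family $\mathcal F$ consists of left-normed brackets with the smallest index in first position, and no Hall family has this shape. Already in weight $3$ the Hall basic commutators are of the form $[[x_j,x_i],x_k]$ with $j>i$, which are inverses of elements of $\mathcal F$ only after one establishes $[X^{-1},Y]=[X,Y]^{-1}$ in $\mathcal RF_n$ (true, but needing the observation that the obstructing commutators have repeats); more seriously, in weight $4$ (so for $n\geq 4$) the Hall basis contains repeat-free elements such as $[[x_4,x_3],[x_2,x_1]]$, which is not left-normed and is not a power of any member of $\mathcal F$. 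Converting such commutators into products of elements of $\mathcal F$ modulo strictly higher weight is exactly the content of the paper's existence proof, carried out there with the identities (i) antisymmetry, (ii) the Jacobi-type relation $[X,[Y,Z]]=[[X,Y],Z]\cdot[[X,Z],Y]^{-1}$, and (iii) bilinearity, all valid in $\mathcal RF_n$ because the error terms have repeats; this is precisely what your proposal omits. The same unproven rewriting lemma is also invoked silently in your reordering step, where you claim the swap errors ``can again be rewritten through $\mathcal F$''. The spanning half of the Lie-theoretic fact you cite for uniqueness could be used to repair the gap, but you never connect it to existence, and one would still need the terminating induction on weight to lift the graded statement back to the group. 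As written, the existence half rests on a false intermediate claim.
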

\begin{proof}
We first show for $\omega\in\mathcal RF_n$ the existence of a decomposition $\omega=\prod_{\alpha\in\mathcal{F}}[\alpha]^{e_\alpha}$. We recall that two commutators commute up to commutators of strictly higher weight, and any commutator of weight strictly bigger than $n$ has repeats and is then trivial according to Proposition \ref{propcommurep}. Thus it is sufficient to express any commutator $C$ as a product of commutators in $\mathcal{F}$. To do so we use the three following relations in $\mathcal RF_n$.
\begin{enumerate}[(i)]
    \item $[X,Y]^{-1}=[Y,X]=[X^{-1},Y]=[X,Y^{-1}]$ with $X,Y$ commutators.
    \item $[X,[Y,Z]]=[[X,Y],Z]\cdot[[X,Z],Y]^{-1}$ with $X,Y,Z$ commutators.
    \item $[UV,X]=[U,X][V,X]$ with $U,V$ commutators such that $\supp(U)\cap \supp(V)\neq\emptyset$.
\end{enumerate}
Relation (i) allows us to move the generator $x_{i_1}$ with $i_1=\min(\supp(C))$ at the desired position; we obtain $C=[\cdots[x_{i_1},C_1],\cdots ,C_k]^{\pm1}$. Relations (i) and (ii) are used to decrease the weight of the commutator $C_i$ in this expression. We start with $C_1=[C_1',C_2']$ supposing its weight is bigger than one, and we get:
\begin{align*}
    C&=[\cdots[x_{i_1},[C_1',C_2']],\cdots,C_k]^{\pm1}\\
    &=[\cdots[[x_{i_1},C_1'],C_2']\cdot[[x_{i_1},C_2'],C_1']^{-1},\cdots,C_k]^{\pm1}\\
    &=[\cdots[[x_{i_1},C_1'],C_2'],\cdots,C_k]^{\pm1}[\cdots[[x_{i_1},C_2'],C_1']^{-1},\cdots,C_k]^{\pm1}\\
    &=[\cdots[[x_{i_1},C_1'],C_2'],\cdots,C_k]^{\pm1}[\cdots[[x_{i_1},C_2'],C_1'],\cdots,C_k]^{\mp1}.
\end{align*} 
Since $\wg(C_1')<\wg(C)$ and $\wg(C_2')<\wg(C)$ we know that by iterating this operation on the new terms we can rewrite $C$ as a product of commutators of the form $[\cdots[x_{i_1},x_{i_2}],C_2],\cdots ,C_k]$. We finish by repeating the process on $C_2,\ \ldots,\ C_k$.

\

To prove the unicity of the decomposition we work with the unit group $U_n$ of the ring of power series in non-commuting variables $X_1,\ \ldots,\ X_n$. More precisely we consider its quotient $\Tilde U_n$ in which the monomials $X^{\alpha}=X_{\alpha_1}X_{\alpha_2}\cdots X_{\alpha_n}$ vanish when they have repetition (i.e. $\alpha_i=\alpha_j$ for some $i\neq j$). The elements in $\tilde U_n$ are of the form $1+Q$ with $Q$ a sum of monomials of degree higher than one, and $(1+Q)^{-1}=1+\bar Q$ with $\bar Q=-Q+Q^2-Q^3+\cdots(-1)^nQ^n$. 
Now we can define the \emph{reduced Magnus expansion} $\tilde{M}$. This is a homomorphism from the reduced free group $\mathcal RF_n$ to $\tilde U_n$, defined by $\tilde{M}(x_i)=1+X_i$. The following computation shows that $\tilde M$ respects the relations of the reduced free group, meaning that $\tilde{M}([x_i,\lambda x_i \lambda^{-1}])=1$ for any generator $x_i$ and any $\lambda$ in $\mathcal{R}F_n$:
\begin{align*}
    \tilde M(\lambda x_i \lambda^{-1})\tilde M(x_i) &=\Big(\tilde M(\lambda)(1+X_i)\tilde M(\lambda^{-1})\Big)(1+X_i)\\
    &=1+X_i+\tilde M(\lambda)X_i\tilde M(\lambda^{-1})\\
    &=(1+X_i)\Big(\tilde M(\lambda)(1+X_i)\tilde M(\lambda^{-1})\Big)\\
    &=\tilde M(x_i)\tilde M(\lambda x_i \lambda^{-1}).
\end{align*}

An easy induction on the weight $l$ of $[\alpha]\in\mathcal F$ gives the following:
\begin{claim}\label{factcool}
For every $[\alpha]=[\alpha_1,\cdots,\alpha_l]\in\mathcal F$, $\tilde M([\alpha])=1+X^{\alpha}+Q_l(X_{\alpha_1},\cdots,X_{\alpha_l})$ where $Q_{l}$ is a sum of monomials of degree $l=\mbox{wg}([\alpha])$ not starting by $X_{\alpha_1}$, and where each variable $X_{\alpha_i}$ for $i\in\{1,\ \ldots,\ l\}$ appears exactly once. 
\end{claim}
\noindent

Now, we take $\omega=\prod_{\alpha\in\mathcal{F}}[\alpha]^{e_\alpha}=\prod_{\alpha\in\mathcal{F}}[\alpha]^{e'_\alpha}$ two decompositions of an element  $\omega\in\mathcal{R}F_n$. We prove by induction on the weight of $[\alpha]$ that $e_{\alpha}=e_\alpha'$ for any commutator $[\alpha]\in\mathcal F$. Suppose that $e_{\alpha}=e'_{\alpha}$ for any $[\alpha]$ of weight $< k$ and compare the coefficients of monomial $X^\alpha$ in both $\tilde M(\prod_{\alpha\in\mathcal{F}}[\alpha]^{e_\alpha})$ and $\tilde M(\prod_{\alpha\in\mathcal{F}}[\alpha]^{e'_\alpha})$ for a fixed commutator $[\alpha]$ of degree $k$. According to Claim \ref{factcool}, commutators of weight $>k$ do not contribute to this coefficient and the only contributing weight $k$ commutator is $[\alpha]$ itself with coefficient $e_\alpha$, resp. $e'_\alpha$. Commutators of weight $<k$ may also contribute to this coefficient but the induction hypothesis ensures that the contribution is the same in both expressions. This proves that $e_{\alpha}=e'_{\alpha}$ for any $[\alpha]$ of weight $k$ and concludes the proof.
\end{proof}
\begin{rmq}\label{rmqpasinj}
Unlike Levine's proof of \cite[Proposition 6]{Levine4comp}, this proof does not require Hall’s basis theorem \cite[Theorem 11.2.4]{Mhall}. 
\end{rmq}


\begin{defn}\label{deflinearization}
To the ordered set of commutators $\mathcal{F}=\{ [\alpha_1],\ \ldots,\ [\alpha_m]\}$ in $\mathcal RF_n$  we associate a $\mathbf Z$-module $\mathcal V$ formally generated by $\{\alpha_1,\ \ldots,\ \alpha_m\}$. We also define the linearization map $\phi:\mathcal RF_n\rightarrow\mathcal V$ by:
\[\phi(\omega)=e_1\alpha_1+\cdots+e_m\alpha_m\quad\quad\quad\mbox{ where }[\alpha_1]^{e_1}\cdots[\alpha_m]^{e_m}\mbox{ is the normal form of }\omega. \]
We keep calling “commutators" the generators of $\mathcal V$ and we define the support and the weight of $\alpha$ to be those of $[\alpha]$. 
\end{defn}
We stress that the normal form and the linearization map $\phi$ both depend on the ordering on $\mathcal F$. 
\begin{lem}\label{lemrank}
The $\mathbf Z$-module $\mathcal V$ is of rank, $$rk(\mathcal{V})=\sum_{0\leq l\leq k<n}\frac{k!}{l!}.$$
Moreover we can decompose $\mathcal V$ into a direct sum of submodules $\mathcal V_i$ generated by the commutators of weight $i$. Then we obtain that: \[rk(\mathcal{V}_i)=\sum_{i-1\leq k< n}\frac{k!}{(k-i+1)!}.\]
\end{lem}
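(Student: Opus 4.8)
The plan is to reduce both statements to a direct enumeration of the set $\mathcal{F}$, since by Definition \ref{deflinearization} the module $\mathcal{V}$ is the \emph{free} $\mathbf{Z}$-module on $\mathcal{F}$. The weight function partitions $\mathcal{F}$ into its weight-$i$ members, which are precisely the free generators of $\mathcal{V}_i$; hence the decomposition $\mathcal{V}=\bigoplus_{i=1}^{n}\mathcal{V}_i$ is immediate, each $\mathcal{V}_i$ is free, and it suffices to show that $rk(\mathcal{V}_i)$ equals the cardinality $|\{[\alpha]\in\mathcal{F} : \wg(\alpha)=i\}|$ matches the stated closed form, after which the total rank follows by summation over weights.

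First I would count the weight-$i$ elements of $\mathcal{F}$. A reduced basic commutator $[i_1,\ldots,i_i]$ of weight $i$ has no repeats, so its entries form a set of $i$ distinct indices in $\{1,\ldots,n\}$, and the defining condition $i_1<i_k$ forces $i_1$ to be the minimum of that set. Thus such a commutator is determined by a choice of support of size $i$ (namely $\binom{n}{i}$ possibilities) together with an arbitrary ordering of the $i-1$ non-minimal entries (namely $(i-1)!$ possibilities), giving $rk(\mathcal{V}_i)=(i-1)!\binom{n}{i}$. As a sanity check, for $n=3$ this yields $3,3,2$ generators in weights $1,2,3$, recovering the $8$ commutators of Example \ref{exnormalformcommu}.

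Next I would match $(i-1)!\binom{n}{i}$ with the summation formula in the statement. Writing $\frac{k!}{(k-i+1)!}=(i-1)!\binom{k}{i-1}$, the claimed sum becomes $(i-1)!\sum_{k=i-1}^{n-1}\binom{k}{i-1}$, which collapses to $(i-1)!\binom{n}{i}$ by the hockey-stick identity $\sum_{k=m}^{N}\binom{k}{m}=\binom{N+1}{m+1}$ applied with $m=i-1$ and $N=n-1$. Finally, summing over weights and substituting $l=k-i+1$ turns $\sum_{i=1}^{n}\sum_{i-1\leq k<n}\frac{k!}{(k-i+1)!}$ into $\sum_{0\leq l\leq k<n}\frac{k!}{l!}$, which is exactly the asserted total rank.

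There is no serious obstacle here: the only content is the direct combinatorial count and a standard binomial identity. The one point requiring care is the bookkeeping of the two reindexings—identifying $\frac{k!}{(k-i+1)!}$ with a binomial coefficient in order to invoke the hockey-stick identity, and then the substitution $l=k-i+1$ when passing from the per-weight sums to the global double sum—both of which are routine once the direct count $(i-1)!\binom{n}{i}$ is established.
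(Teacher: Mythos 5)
Your proof is correct, but it organizes the enumeration differently from the paper's. The paper stratifies $\mathcal{F}$ by the value of the first (minimal) entry $\alpha_1=k$: for fixed $k$, the remaining entries of a weight-$(l+1)$ commutator form an ordered selection of $l$ distinct elements of $\{k+1,\ldots,n\}$, contributing $\frac{(n-k)!}{(n-k-l)!}$, and the sums in the statement then appear directly after the reindexings $k\mapsto n-k$ and $l\mapsto k-l$, with no binomial identities needed. You instead stratify by weight and count each weight class by first choosing the support ($\binom{n}{i}$ ways) and then an arbitrary order of the $i-1$ non-minimal entries, giving the closed form $rk(\mathcal{V}_i)=(i-1)!\binom{n}{i}$; this forces you to invoke the hockey-stick identity $\sum_{k=i-1}^{n-1}\binom{k}{i-1}=\binom{n}{i}$ to reconcile your count with the summation formula of the statement, a step the paper avoids by design of its stratification. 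The trade-off is worthwhile to note: the paper's route lands exactly on the stated formulas, while yours produces the cleaner closed form $rk(\mathcal{V}_i)=(i-1)!\binom{n}{i}=\frac{n!}{i\,(n-i)!}$ as a bonus, at the cost of one standard identity; your final reindexing $l=k-i+1$ to pass from the per-weight sums to the global double sum is the same kind of bookkeeping the paper performs, and your treatment of the direct-sum decomposition (immediate from freeness of $\mathcal{V}$ on $\mathcal{F}$ and the partition of generators by weight) is fine.
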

\begin{proof}
The first equality comes by counting the cardinality of $\mathcal F$. To do so we first count the elements $[\alpha]$ with first term $\alpha_1=k$. To choose $\alpha_2,\ \alpha_3,\ \ldots,\  \alpha_l$ with $0\leq l<n-k$ we only have to respect the condition that $\alpha_1<\alpha_i$. Thus they can be freely chosen in $\{k+1,\ \ldots,\ n\}$ and therefore: 
$$rk(\mathcal{V})=\sum_{k=1}^n\sum_{l=0}^{n-k}\frac{(n-k)!}{(n-k-l)!}=\sum_{k=0}^{n-1}\sum_{l=0}^{k}\frac{k!}{(k-l)!}=\sum_{k=0}^{n-1}\sum_{l=0}^{k}\frac{k!}{l!}.$$
For the second equality, we follow the same kind of reasoning, but this time $\alpha_1=k$ must be chosen in $\{1,\ \ldots,\ n-i+1\}$, then we choose the $i-1$ last numbers $\alpha_2,\ \ldots,\ \alpha_i$ without restriction in $\{k+1,\ \ldots,\ n\}$. We obtain:
$$rk(\mathcal{V}_i)
=\sum_{k=1}^{n-i+1}\frac{(n-k)!}{(n-k-i+1)!}=\sum_{k=i-1}^{n-1}\frac{k!}{(k-i+1)!}.$$
\end{proof}
\subsubsection{Braid groups.} 

In this section we use the usual Artin braid generators $\sigma_i$ for $i\in\{1,\ \ldots,\ n-1\}$ illustrated in Figure \ref{artingene} and the usual pure braid generators $A_{ij}=\sigma_{j-1}\sigma_{j-2}\cdots\sigma_{i+1}\sigma_i^2\sigma_{i+1}^{-1}\cdots\sigma_{j-2}^{-1}\sigma_{j-1}^{-1}$ for $1\leq i<j\leq n$ illustrated in Figure \ref{puregene}.
\begin{figure}[!htbp]
    \begin{minipage}[c]{.46\linewidth}
        \centering
    \includegraphics[scale=0.7]{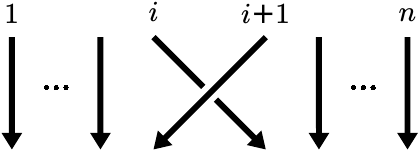}
        \caption{The Artin generator $\sigma_i$.}
        \label{artingene}
    \end{minipage}
    \hfill%
    \begin{minipage}[c]{.46\linewidth}
        \centering
    \includegraphics[scale=0.7]{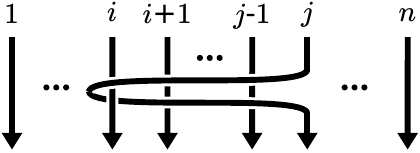}
        \caption{The pure braid generator $A_{ij}$.}
        \label{puregene}
    \end{minipage}
\end{figure}

The following theorem is based on the result of \cite{GoldsmithHomotopybraids}.
\begin{thm}\label{thmBnhomo}
Let $J\triangleleft B_n$ denote the normal subgroup generated by all elements of the form $[A_{ij},\lambda A_{ij}\lambda^{-1}]$ where $\lambda$ belongs to $P_n$. We obtain the homotopy braid group $\tilde B_n$ as the quotient:
$$\tilde B_n=B_n/J.$$
\end{thm}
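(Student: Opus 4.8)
The plan is to realise $\tilde B_n$ as a quotient of $B_n$ by identifying the kernel of the obvious projection. Since an ambient isotopy is in particular a link-homotopy, the identity on the Artin generators induces a surjective homomorphism $\pi:B_n\twoheadrightarrow\tilde B_n$, and the statement reduces to proving $\ker\pi=J$. I would establish the two inclusions separately: the inclusion $J\subseteq\ker\pi$ geometrically, by showing via clasper calculus that each normal generator $[A_{ij},\lambda A_{ij}\lambda^{-1}]$ is link-homotopically trivial; and the reverse inclusion $\ker\pi\subseteq J$ by invoking Goldsmith's presentation of $\tilde B_n$ from \cite{GoldsmithHomotopybraids} and checking that her relators already lie in $J$.

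For $J\subseteq\ker\pi$, recall from the proof of Theorem \ref{normalformex} that $A_{ij}$ is represented by the comb-clasper $(ij)$, whose support is $\{i,j\}$. The crux is a support-persistence fact: for every $\lambda\in P_n$, the homotopy braid $\lambda A_{ij}\lambda^{-1}$ is link-homotopic to a product of comb-claspers each of whose support contains $\{i,j\}$. I would prove this by induction on the length of $\lambda$ as a word in the generators $A_{kl}$, using Lemma \ref{lemclasparecombclaps} to rewrite conjugates as products of comb-claspers: conjugating a comb-clasper $Q$ with $\{i,j\}\subseteq\supp(Q)$ by a degree-one clasper $(kl)$ leaves $Q$ unchanged when $\{k,l\}\cap\supp(Q)=\emptyset$, and otherwise produces only correction comb-claspers whose support is $\supp(Q)\cup\{k,l\}\supseteq\{i,j\}$, so that the pair $\{i,j\}$ is never lost. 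Granting this, both $A_{ij}$ and every comb-clasper factor of $\lambda A_{ij}\lambda^{-1}$ share the two strands $i$ and $j$, so each such pair satisfies $|\supp\cap\supp|\geq2$. By Remark \ref{rmqclaspcalculus} we may then freely perform both edge crossing changes and leaf exchanges between these claspers, since in each case the correction clasper has repeats and vanishes by Lemma \ref{lemfeuilledouble}. Hence $(ij)$ commutes up to link-homotopy with $\lambda A_{ij}\lambda^{-1}$, so the group commutator satisfies $[A_{ij},\lambda A_{ij}\lambda^{-1}]\sim\emptyset$. As $\ker\pi$ is a normal subgroup, this gives $J\subseteq\ker\pi$.

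For $\ker\pi\subseteq J$, I would use Goldsmith's result \cite{GoldsmithHomotopybraids}, which presents $\tilde B_n$ as the quotient of $B_n$ by the normal closure of an explicit finite set $R$ of homotopy relators, so that $\ker\pi=\langle\!\langle R\rangle\!\rangle$. Rewriting each relator of $R$ from the Artin generators into the pure braid generators $A_{kl}$, one sees that it is built from commutators $[A_{ij},w\,A_{ij}\,w^{-1}]$ with $w\in P_n$, each a special case of the normal generators of $J$; hence $R\subseteq J$ and, $J$ being normal, $\langle\!\langle R\rangle\!\rangle\subseteq J$. Combined with the previous paragraph this yields $\ker\pi=J$, and therefore $\tilde B_n=B_n/J$. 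I expect the main obstacle to be precisely the support-persistence step of the geometric inclusion: one must control, all along the inductive rewriting of conjugates into products of comb-claspers, that no correction term ever drops the pair $\{i,j\}$ from its support, as this is exactly what forces the two strands to stay shared and every correction clasper to have repeats. By contrast, matching Goldsmith's relators with the commutator form of the generators of $J$ should be comparatively routine bookkeeping.
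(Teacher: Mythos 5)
Your overall architecture coincides with the paper's: one inclusion comes from Goldsmith's result (her homotopy relators are exactly commutators $[A_{ij},\lambda A_{ij}\lambda^{-1}]$ with $\lambda$ restricted to the normal closure of $\{A_{1j},\dots,A_{j-1,j}\}$, so the paper simply notes that this subgroup $J'$ is obviously contained in $J$), and the other inclusion is the geometric statement that $A_{ij}$ and $\Lambda A_{ij}\Lambda^{-1}$ commute up to link-homotopy for every $\Lambda\in P_n$. Where you diverge is in how that commutation is proved, and here the paper is far more economical: it observes that $\Lambda A_{ij}\Lambda^{-1}$ is the surgery result $\mathbf1^{C}$ of a \emph{single} degree-one clasper $C=\iota((ij))$, where $\iota$ is the ambient isotopy carrying $\Lambda\mathbf1\Lambda^{-1}$ to $\mathbf1$ (surgery commutes with ambient isotopy). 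Since an ambient isotopy cannot change which components a clasper meets, $\supp(C)=\{i,j\}=\supp((ij))$, and Remark \ref{rmqclaspcalculus} immediately gives $(ij)C\sim C(ij)$. This one observation eliminates entirely what you correctly identify as the main obstacle of your route: the support-persistence induction over the word length of $\lambda$, with its repeated appeals to Lemma \ref{lemclasparecombclaps} (whose statement, note, does not actually record that the resulting comb-claspers have support containing that of the input clasper; you would have to extract this from its proof).

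Separately, one sub-claim in your induction is false as stated: conjugating $Q$ by $(kl)$ with $\{k,l\}\cap\supp(Q)=\emptyset$ does \emph{not} in general leave $Q$ unchanged up to link-homotopy. Disjoint supports do not imply commutation: by the paper's own presentation (Corollary \ref{coropresentation}), $[A_{ri},A_{sj}]=[[A_{ij},A_{rj}],A_{sj}]$ for $r<s<i<j$, which is a nontrivial element of $\tilde P_n$ (e.g. $[A_{13},A_{24}]\neq 1$ in $\tilde P_4$); concretely, move (4) of Corollary \ref{lemclaspcalculus} produces correction claspers whose support is the union of the two supports, and these have no repeats when the supports are disjoint, so they cannot be deleted. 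The flaw happens to be harmless for your purposes, because in that case the corrections are supported on $\supp(Q)\cup\{k,l\}\supseteq\{i,j\}$, which is all your argument needs; but the dichotomy should be restated accordingly: in \emph{every} case, exchanging $(kl)^{\pm1}$ with a factor $Q$ creates corrections supported on $\supp(Q)\cup\{k,l\}$, so the pair $\{i,j\}$ is never lost.
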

\begin{proof}
In \cite{GoldsmithHomotopybraids}, the homotopy braid group $\tilde B_n$ appears as the quotient $B_n/J'$, where $J'\triangleleft B_n$ is the normal subgroup generated by elements of the form $[A_{ij},\lambda A_{ij}\lambda^{-1}]$ where $\lambda$ belongs to the normal subgroup generated by $\{A_{1,j},\ \ldots,\ A_{j-1,j}\}$. Our result relies on the observation that $J=J'$. Obviously $J'\subset J$ thus we only need to show that $J\subset J'$. This is equivalent to showing that for any $\Lambda\in P_n$, $A_{ij}$ and $\Lambda A_{ij} \Lambda^{-1}$ commute up to link-homotopy. Let us remind that $A_{ij}$ is the surgery result $\mathbf1^{(ij)}$ of the comb-clasper $(ij)$. Thus the conjugate $\Lambda A_{ij} \Lambda^{-1}$ is the surgery result of the clasper $C=\iota(ij)$, where $\iota$ is the ambient isotopy sending $\Lambda\Lambda^{-1}$ to the trivial braid $\mathbf1$. Now it is clear that $\supp(C)=\supp(ij)$, hence according to Remark \ref{rmqclaspcalculus}, $(ij)C\sim C(ij)$ and the result is proved.
\end{proof}

In order to obtain a similar result for the pure homotopy braid group we need the following.
\begin{lem}\label{lemJnormalygene}
The subgroup $J\triangleleft B_n$ normally generated in $B_n$ by elements of the form $[A_{ij},\lambda A_{ij} \lambda^{-1}]$ for $\lambda\in P_n$, seen as a subgroup of $P_n$, coincides with the normal subgroup of $P_n$ generated by elements of the form  $[A_{ij},\lambda A_{ij} \lambda^{-1}]$ for $\lambda\in P_n$.
\end{lem}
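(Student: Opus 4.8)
The plan is to prove the two inclusions between the two subgroups, the nontrivial one being that the $P_n$-normal closure is already normal in the whole group $B_n$. Throughout I write $g_{ij,\lambda}:=[A_{ij},\lambda A_{ij}\lambda^{-1}]$ for the defining elements, and $J_P$ for the normal subgroup of $P_n$ generated by all the $g_{ij,\lambda}$ with $\lambda\in P_n$. First I would record that $J\subseteq P_n$: since $P_n\triangleleft B_n$ and every generator $g_{ij,\lambda}$ of $J$ lies in $P_n$, its $B_n$-normal closure stays inside $P_n$. The inclusion $J_P\subseteq J$ is then immediate, because $J$ is normal in $B_n$, hence a fortiori normal in $P_n$, contains all the $g_{ij,\lambda}$, and therefore contains their $P_n$-normal closure $J_P$. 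It remains to prove $J\subseteq J_P$, for which it suffices to show that $J_P$ is in fact normal in $B_n$: then $J$, being by definition the smallest $B_n$-normal subgroup containing the $g_{ij,\lambda}$, is contained in $J_P$.

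The key algebraic observation is the following elementary remark. If $b$ is any $P_n$-conjugate of a pure braid generator, say $b=\nu A_{i'j'}\nu^{-1}$ with $\nu\in P_n$, then $[b,\mu b\mu^{-1}]\in J_P$ for every $\mu\in P_n$. This follows from the identity $[b,\mu b\mu^{-1}]=\nu\,[A_{i'j'},(\nu^{-1}\mu\nu)A_{i'j'}(\nu^{-1}\mu\nu)^{-1}]\,\nu^{-1}=\nu\, g_{i'j',\,\nu^{-1}\mu\nu}\,\nu^{-1}$, which exhibits $[b,\mu b\mu^{-1}]$ as a $P_n$-conjugate of a defining generator of $J_P$, hence as an element of $J_P$.

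To prove $J_P\triangleleft B_n$ it is enough to check that conjugation by each $\sigma_k^{\pm1}$ preserves $J_P$, since the $\sigma_k$ generate $B_n$. As $J_P$ is generated as a subgroup by the elements $\mu g_{ij,\lambda}\mu^{-1}$ with $\mu\in P_n$, and since $\sigma_k\mu\sigma_k^{-1}\in P_n$ (again because $P_n\triangleleft B_n$), the problem reduces to showing that $\sigma_k g_{ij,\lambda}\sigma_k^{-1}\in J_P$. Expanding the commutator gives $\sigma_k g_{ij,\lambda}\sigma_k^{-1}=[A',\lambda' A'\lambda'^{-1}]$, where $A'=\sigma_k A_{ij}\sigma_k^{-1}$ and $\lambda'=\sigma_k\lambda\sigma_k^{-1}\in P_n$. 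At this point I invoke the classical conjugation formulas for the action of $B_n$ on $P_n$ (see \cite{MurasugiKunioKurpitaStudyofbraid}): for every $k$ and every $i<j$ the element $\sigma_k A_{ij}\sigma_k^{-1}$ is a $P_n$-conjugate of the single generator $A_{\tau(i)\tau(j)}$, where $\tau=(k,k+1)$. The remark of the previous paragraph, applied with $b=A'$ and $\mu=\lambda'$, then yields $[A',\lambda'A'\lambda'^{-1}]\in J_P$; the same argument applied to $\sigma_k^{-1}$ gives the reverse inclusion. Hence $J_P\triangleleft B_n$, and combined with $J_P\subseteq J$ this gives $J=J_P$.

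I expect the only genuine obstacle to be the fact that $\sigma_k A_{ij}\sigma_k^{-1}$ is always conjugate in $P_n$, and by a \emph{single} element, to a standard generator $A_{\tau(i)\tau(j)}$. This amounts to a case analysis on the relative positions of $k,k+1$ with respect to $i,j$ (the only interacting cases being $k\in\{i-1,i,j-1,j\}$); a sample computation, e.g. $\sigma_1 A_{23}\sigma_1^{-1}=A_{23}^{-1}A_{13}A_{23}$, illustrates the shape of the answer, and in every case one lands on a single conjugate rather than a product. This is classical and can simply be quoted, so that everything else in the proof is a formal manipulation of normal closures together with the commutator identity above.
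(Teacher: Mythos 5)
Your proof is correct and takes essentially the same route as the paper: both reduce the lemma to showing that the $P_n$-normal closure is already normal in $B_n$, by checking conjugation by the Artin generators $\sigma_k$ and using that $\sigma_k A_{ij}\sigma_k^{-1}$ is a $P_n$-conjugate of $A_{\tau(i)\tau(j)}$ with $\tau=(k,k+1)$. The paper merely writes out the resulting case-by-case formulas for $\sigma_k[A_{ij},\lambda A_{ij}\lambda^{-1}]\sigma_k^{-1}$ directly, whereas you factor the same computation through the conjugation identity $[\nu A\nu^{-1},\mu\nu A\nu^{-1}\mu^{-1}]=\nu[A,(\nu^{-1}\mu\nu)A(\nu^{-1}\mu\nu)^{-1}]\nu^{-1}$ and a citation of the classical conjugation formulas.
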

\begin{proof}
For $k\in\{1,\ \ldots,\ n-1\}$, $1\leq i<j\leq n$ and $\lambda \in P_n$ we compute:
$$\sigma_k[A_{ij},\lambda A_{ij}\lambda^{-1}]\sigma_k^{-1}=\left\{
\begin{array}{lcl}
[A_{i+1j},\lambda_1 A_{i+1j}\lambda_1^{-1}]&&\text{if }i=k\text{ and }j\neq k+1 \\

[A_{i+1j},\lambda_2 A_{i+1j}\lambda_2^{-1}]&&\text{if }j=k\\

A_{kk+1}[A_{i-1j},\lambda_3 A_{i-1j}\lambda_3^{-1}]A_{kk+1}^{-1}&&\text{if }i=k+1\\

A_{kk+1}[A_{ij-1},\lambda_4 A_{ij-1}\lambda_4^{-1}]A_{kk+1}^{-1}&&\text{if }i\neq k\text{ and }j=k+1\\

[A_{ij},\lambda A_{ij}\lambda^{-1}]&&\text{otherwise},
\end{array}\right.$$
with $\lambda_i\in P_n$ for $i\in\{1,\ 2,\ 3,\ 4\}$. Therefore the conjugates $\sigma_k[A_{ij},\lambda A_{ij}\lambda^{-1}]\sigma_k^{-1}$ are always conjugates of $[A_{i'j'},\lambda' A_{i'j'}(\lambda')^{-1}]$ in $P_n$ for some $1\leq i'<j'\leq n$ and $\lambda'\in P_n$ and the proof is done.
\end{proof}
\begin{cor}\label{coropresentation}
Let $J\triangleleft P_n$ be the normal subgroup generated by elements of the form $[A_{ij},\lambda A_{ij} \lambda^{-1}]$ for any $\lambda\in P_n$. We obtain the pure homotopy braid group $\tilde P_n$ as the following quotient:
$$\tilde P_n=P_n/J=\mathcal RP_n.$$
This induces the following presentation for $\tilde P_n$:
  $$\tilde P_n=\scaleleftright[1.3ex]{\langle}{A_{ij}\left|
  \begin{array}{lll}
    [A_{rs},A_{ij}]=1 & & r<s<i<j\mbox{ or } r<i<j<s\\  
    \left[A_{rs},A_{rj}\right]=[A_{rj},A_{sj}]=[A_{sj},A_{rs}]& & r<s<j\\
    \left[A_{ri},A_{sj}\right]=[[A_{ij},A_{rj}],A_{sj}]& &r<s<i<j\\  
    \left[A_{ij},\lambda A_{ij} \lambda^{-1}\right]=1 & & i<j \mbox{ and } \lambda\in \tilde P_n
 \end{array}\right.}{\rangle}.$$
\end{cor}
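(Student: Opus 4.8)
The plan is to derive the presentation from the identification $\tilde P_n=P_n/J=\mathcal{R}P_n$, and then to read off the four families of relations in the statement, which I label (R1)--(R4) in order, from the classical presentation of $P_n$ after adjoining the reduced relations and performing a Tietze simplification.

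First I would establish $\tilde P_n=P_n/J=\mathcal{R}P_n$. The generators $[A_{ij},\lambda A_{ij}\lambda^{-1}]$ of $J$ lie in $P_n$, and since $P_n\triangleleft B_n$ the whole normal closure $J$ stays inside $P_n$; thus $P_n\cap J=J$. Theorem \ref{thmBnhomo} gives $\tilde B_n=B_n/J$, so the subgroup $\tilde P_n\subseteq\tilde B_n$ of pure homotopy braids is the image of $P_n$, namely $P_n/(P_n\cap J)=P_n/J$. Lemma \ref{lemJnormalygene} then identifies $J$ with the normal closure in $P_n$ of the elements $[A_{ij},\lambda A_{ij}\lambda^{-1}]$, which is precisely the subgroup $J_{P_n}$ of Definition \ref{defredgrp} for the normal generating set $\{A_{ij}\}$; hence $P_n/J=\mathcal{R}P_n$. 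In the relation (R4) one may let $\lambda$ range over $P_n$ or over $\tilde P_n$ indifferently, since it depends only on the class of $\lambda$.

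Next I would start from the classical presentation of $P_n$ on the generators $A_{ij}$ (as in \cite{MurasugiKunioKurpitaStudyofbraid}), whose defining relations are the conjugation relations indexed by the way the index pairs $\{r,s\}$ and $\{i,j\}$ interleave. Adjoining the reduced relations $[A_{ij},\lambda A_{ij}\lambda^{-1}]=1$, which are exactly (R4) and which cut out $J$, produces a presentation of $\tilde P_n$, and it remains to rewrite the classical relations into the stated families. The cases where $\{r,s\}$ and $\{i,j\}$ are disjoint or nested already assert commutativity and give (R1) verbatim. The relations among three generators whose index pairs share a single index rearrange, via the centrality of the triangular product in the corresponding copy of $P_3$, into the symmetric commutator form (R2); this rewriting already takes place in $P_n$. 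The interleaving case $r<s<i<j$ produces the commutator $[A_{ri},A_{sj}]$, which I would reduce modulo $J$ to $[[A_{ij},A_{rj}],A_{sj}]$, yielding (R3).

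The main obstacle is this last simplification. In $P_n$ the commutator of two interleaving generators is a long conjugation word, and collapsing it to $[[A_{ij},A_{rj}],A_{sj}]$ requires both the reduced relations and the commutator calculus developed in the proof of Theorem \ref{basiscommuthm}, namely the identities (i)--(iii) together with the vanishing of every commutator that has repeats. To close the Tietze argument in the other direction I must also check that the complete set of classical conjugation relations is recovered from (R1)--(R4): once (R1), (R2) and (R3) are in hand, each classical relation is reconstructed by expanding its right-hand side and absorbing the repeat-bearing correction terms into $J$. I would organize all of these verifications by the weight of the commutators involved, discarding higher-degree correction terms through Lemma \ref{lemfeuilledouble}, exactly as in the inductive bookkeeping of Theorem \ref{basiscommuthm}; alternatively, the non-commuting relations (R2) and (R3) can be verified geometrically under the correspondence $A_{ij}=(ij)$, using the IHX relation of Proposition \ref{lemIHX} and the clasper moves of Corollary \ref{lemclaspcalculus}.
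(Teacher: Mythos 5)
Your overall strategy coincides with the paper's: the quotient statement is deduced from Theorem \ref{thmBnhomo} together with Lemma \ref{lemJnormalygene} (your treatment of this part, including the observation that $J\subset P_n$ and that $\lambda$ may range over $P_n$ or $\tilde P_n$ indifferently, is correct), and the presentation is obtained from \cite[Theorem 3.8]{MurasugiKunioKurpitaStudyofbraid} by adjoining the reduced relations and rewriting the classical relations in commutator form.

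However, one step of your rewriting is false as stated: the claim that the shared-index relations rearrange into the symmetric form (R2) ``already in $P_n$''. They do not. In $P_3$, set $a=A_{12}$, $b=A_{13}$, $c=A_{23}$, so that $z=abc$ is central and $P_3\cong\langle a,c\rangle\times\langle z\rangle$ with $\langle a,c\rangle$ free of rank two. A direct computation gives $[a,b]=[c^{-1},a^{-1}]=c^{-1}a^{-1}ca$, $[b,c]=[a^{-1},c]=a^{-1}cac^{-1}$ and $[c,a]=cac^{-1}a^{-1}$: three cyclic rotations of the same reduced word, hence conjugate but pairwise \emph{distinct} elements of the free group $\langle a,c\rangle$. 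So the equalities $[A_{rs},A_{rj}]=[A_{rj},A_{sj}]=[A_{sj},A_{rs}]$ fail in $P_n$; they appear only once one is allowed to discard the conjugating elements, i.e.\ only modulo the relations (R4) --- concretely, via the identity $[A_{rs},A_{ij}^{-1}]=[A_{rs},A_{ij}]^{-1}$, which holds in $\tilde P_n$ but not in $P_n$, and which is exactly the relation the paper's proof invokes for this rewriting. The repair is straightforward and consistent with the rest of your plan: treat (R2) the same way you treat (R3), performing the rewriting modulo (R4). But as written, this step of your argument would fail, and it is precisely the point where the reduced relations are indispensable rather than optional.
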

\begin{proof}
The quotient statement is a direct consequence of Proposition \ref{thmBnhomo} and Lemma \ref{lemJnormalygene}. The presentation is obtained from that of \cite[Theorem 3.8]{MurasugiKunioKurpitaStudyofbraid} re-expressed in terms of commutator and using the relation $[A_{rs},A_{ij}^{-1}]=[A_{rs},A_{ij}]^{-1}$ which holds in $\tilde P_n$.
\end{proof}

We next recall two classical representations of braid groups that are known to be faithful (see \cite{ArtinBraid} and \cite{HabeggerLinHomotopy} for more details).
\begin{defn}
We call \emph{Artin representation} the homomorphism $\rho :B_n\rightarrow Aut(F_n)$ defined as follows:\[ \rho(\sigma_i): \left \{ \begin{array}{llll}
    x_i&\mapsto&x_{i+1},&\\
    x_{i+1} &\mapsto&x_{i+1}x_ix_{i+1}^{-1},&\\
    x_k&\mapsto&x_k&\mbox{if }k\notin\{i,\ i+1\}.
 \end{array}
 \right.\]
Similarly the homomorphism $\tilde \rho :\tilde{B}_n\rightarrow Aut(\mathcal RF_n)$ defined by the same expressions is called the \emph{homotopic Artin representation}.
 \end{defn}
\subsection{A linear faithful representation of the homotopy braid group.}\label{sectionrpz}

\subsubsection{Algebraic definition}
Let $GL(\mathcal{V})$ be the general linear group of the $\mathbf Z$-module $\mathcal{V}$ introduced in Definition \ref{deflinearization}.
In order to define the linear representation $\gamma:\tilde{B}_n\rightarrow GL(\mathcal{V})$, we state the following preparatory lemma. Let us denote by $N_j$ the subgroup normally generated by $x_j$ in $\mathcal{R}F_n$ for $j\in\{1,\ \ldots,\ n\}$; note that $N_j$ is an abelian group.
\begin{lem}\label{lemgammarpz}
Let $\beta\in\tilde B_n$ be a homotopy braid. For any commutator $C\in N_j$, if the product $[\alpha_1]^{e_1}\cdots [\alpha_m]^{e_m}$ is a normal form of $\tilde{\rho}(\beta)(C)$ then we have that $e_i=0$ if $[\alpha_i]\notin N_{\pi^{-1}(\beta)(j)}$. Here $\pi^{-1}(\beta)(j)$ is the image of $j$ under the permutation induced by $\beta^{-1}$.
\end{lem}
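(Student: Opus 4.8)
The plan is to deduce the statement from two essentially independent facts: that the homotopic Artin representation carries $N_j$ onto $N_{\pi^{-1}(\beta)(j)}$, and that any element of a subgroup $N_{j'}$ admits a normal form supported only on those basic commutators that themselves lie in $N_{j'}$. I will characterise the latter combinatorially as the commutators $[\alpha]$ with $j'\in\supp(\alpha)$.

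First I would treat the action on the normal closures. By definition $N_j$ is the normal closure $\langle\langle x_j\rangle\rangle$ of $x_j$ in $\mathcal RF_n$ (it is abelian precisely because the defining reduced relations force any two conjugates of $x_j$ to commute). Since $\tilde\rho(\beta)$ is an automorphism, it commutes with taking normal closures, so $\tilde\rho(\beta)(N_j)=\langle\langle\tilde\rho(\beta)(x_j)\rangle\rangle$. The essential input is the standard feature of the Artin action that $\tilde\rho(\beta)(x_j)$ is a conjugate of the single generator $x_{\pi^{-1}(\beta)(j)}$; as a normal closure is unchanged when its generator is replaced by a conjugate, this yields $\tilde\rho(\beta)(N_j)=N_{\pi^{-1}(\beta)(j)}$. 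Writing $j':=\pi^{-1}(\beta)(j)$, I conclude that $\tilde\rho(\beta)([\alpha])\in N_{j'}$ whenever $[\alpha]\in N_j$.

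Next I would identify the basic commutators inside $N_{j'}$. Consider the homomorphism $q_{j'}\colon\mathcal RF_n\to\mathcal RF_{n-1}$ sending $x_{j'}\mapsto 1$ and fixing the remaining generators; it is well defined since it carries each reduced relation to a reduced relation, and under it the basic commutators with $j'\notin\supp(\alpha)$ map bijectively onto the basic commutator set of $\mathcal RF_{n-1}$ (the ordering condition being inherited), while every basic commutator with $j'\in\supp(\alpha)$ is sent to $1$. An induction on weight proves that $j'\in\supp(\alpha)$ forces $[\alpha]\in N_{j'}$: for $[\alpha]=[C_1,C_2]$ one writes $[C_1,C_2]=C_1\,(C_2C_1^{-1}C_2^{-1})$ or $(C_1C_2C_1^{-1})\,C_2^{-1}$ according to which of $C_1,C_2$ contains the index $j'$, and uses the inductive hypothesis together with the normality of $N_{j'}$. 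Conversely $q_{j'}$ shows $[\alpha]\notin N_{j'}$ when $j'\notin\supp(\alpha)$. Hence $[\alpha]\in N_{j'}$ if and only if $j'\in\supp(\alpha)$.

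Finally I would combine these. Let $[\alpha_1]^{e_1}\cdots[\alpha_m]^{e_m}$ be the normal form of $\tilde\rho(\beta)([\alpha])$ and apply $q_{j'}$. Because $\tilde\rho(\beta)([\alpha])\in N_{j'}\subseteq\ker q_{j'}$ the image is trivial, whereas $q_{j'}$ annihilates exactly the factors with $j'\in\supp(\alpha_i)$ and sends the others to a product of pairwise distinct basic commutators of $\mathcal RF_{n-1}$. The uniqueness of the normal form (Theorem \ref{basiscommuthm}, applied in $\mathcal RF_{n-1}$) then forces $e_i=0$ for every $i$ with $j'\notin\supp(\alpha_i)$, equivalently for every $i$ with $[\alpha_i]\notin N_{j'}$, which is exactly the claim. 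I expect the only delicate point — the main obstacle — to be pinning down the permutation precisely: one must check that $\tilde\rho(\beta)(x_j)$ is conjugate to $x_{\pi^{-1}(\beta)(j)}$ and not to $x_{\pi(\beta)(j)}$, a convention-sensitive verification of the Artin action that propagates the correct index $j'$ through the whole argument.
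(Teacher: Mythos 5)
Your proposal is correct and follows essentially the same route as the paper: both establish that $\tilde\rho(\beta)$ carries $N_j$ into $N_{\pi^{-1}(\beta)(j)}$ (via the Artin action on generators), and then show that the normal form of any element of $N_{j'}$ involves only commutators in $N_{j'}$ by applying the homomorphism $x_{j'}\mapsto 1$ and invoking uniqueness of normal forms. Your write-up merely fills in details the paper leaves implicit, such as the characterization $[\alpha]\in N_{j'}\Leftrightarrow j'\in\supp(\alpha)$ and the well-definedness of the quotient map.
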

In other words in the image by $\tilde{\rho}(\beta)$ of $C\in N_j$, $x_{\pi^{-1}(\beta)(j)}$ occurs in each factor of the normal form.
\begin{proof}
The proof comes from the fact that any element of $N_j$ is sent by $\tilde\rho(\beta)$ to an element of $N_{\pi^{-1}(\beta)(j)}$. This is clear for the Artin generators $\sigma_i$ and so it is for any braid $\beta$. Thus we conclude using the fact that the normal form $\omega=C_1^{e_1}\cdots C_m^{e_m}$ of any element $\omega\in N_k$, for any $k$ contains only commutators in $N_k$. To see this we use the homomorphism of $\mathcal RF_n$ defined by $x_k \mapsto 1$ which sends the normal form of $\omega$ to $\mathbf1$.
 \end{proof}

Recall from Definition \ref{deflinearization} the linearization map $\phi:\mathcal RF_n\to\mathcal V$.
\begin{thm}\label{thmgammarpz}
The map $$\gamma:\tilde{B}_n\rightarrow GL(\mathcal{V})$$ defined for $\beta\in\tilde B_n$ and $[\alpha]\in\mathcal F$ by $\gamma(\beta)(\alpha)=\phi\circ\tilde\rho(\beta)([\alpha])$
is a well defined homomorphism. Moreover $\gamma$ does not depend on the chosen order on $\mathcal F$.
\end{thm}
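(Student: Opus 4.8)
The plan is to establish, in order, that each $\gamma(\beta)$ is a well-defined endomorphism of $\mathcal{V}$, that $\gamma$ is multiplicative, and that the outcome is independent of the ordering of $\mathcal{F}$; invertibility (so that $\gamma$ indeed lands in $GL(\mathcal{V})$) will then follow for free. Since $\mathcal{V}$ is the free $\mathbf{Z}$-module on the basic commutators $\{\alpha_1,\dots,\alpha_m\}$, and since $\tilde\rho(\beta)\in\mathrm{Aut}(\mathcal{R}F_n)$ and $\phi$ are already well-defined (Definition \ref{deflinearization}), the assignment $\alpha_i\mapsto\phi(\tilde\rho(\beta)([\alpha_i]))$ extends uniquely to a $\mathbf{Z}$-linear map $\gamma(\beta)$, so this first point needs no further argument.

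For multiplicativity I would unwind what $\gamma(\beta_1\beta_2)=\gamma(\beta_1)\gamma(\beta_2)$ requires. Writing $w:=\tilde\rho(\beta_2)([\alpha])$ with normal form $w=[\alpha_1]^{c_1}\cdots[\alpha_m]^{c_m}$, so that $\phi(w)=\sum_i c_i\alpha_i$, one has on one side $\gamma(\beta_1\beta_2)(\alpha)=\phi(\tilde\rho(\beta_1)(w))$ and on the other $\gamma(\beta_1)(\gamma(\beta_2)(\alpha))=\sum_i c_i\,\phi(\tilde\rho(\beta_1)([\alpha_i]))$. As $\tilde\rho(\beta_1)$ is an automorphism, $\tilde\rho(\beta_1)(w)=\prod_i\tilde\rho(\beta_1)([\alpha_i])^{c_i}$, so the identity to prove is the \emph{additivity statement}
$$\phi\Big(\prod_i y_i^{\,c_i}\Big)=\sum_i c_i\,\phi(y_i),\qquad y_i:=\tilde\rho(\beta_1)([\alpha_i]).$$
This is the heart of the matter, and I expect it to be the main obstacle: $\phi$ is \emph{not} additive on $\mathcal{R}F_n$ in general, since the normal form of a product picks up higher-weight commutator corrections, so the equality can only hold because of the very special nature of the elements $y_i$.

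The key observation is that all the $y_i$ with $c_i\neq0$ lie in a single \emph{abelian} subgroup. Choosing any $j_0\in\supp(\alpha)$ we have $[\alpha]\in N_{j_0}$, hence $w=\tilde\rho(\beta_2)([\alpha])\in N_{k}$ for $k:=\pi^{-1}(\beta_2)(j_0)$; Lemma \ref{lemgammarpz} then forces $c_i=0$ whenever $[\alpha_i]\notin N_k$. For the surviving indices, $[\alpha_i]\in N_k$ gives $y_i\in N_{k'}$ with $k':=\pi^{-1}(\beta_1)(k)$. Now $N_{k'}$ is abelian, since its generators are conjugates of $x_{k'}$ and these commute pairwise by the defining relations $[x_{k'},\lambda x_{k'}\lambda^{-1}]=1$ of $\mathcal{R}F_n$; thus the $y_i$ commute and $\prod_i y_i^{c_i}$ is an honest product in $N_{k'}$. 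Finally, on $N_{k'}$ the map $\phi$ \emph{is} additive: by Theorem \ref{basiscommuthm} the subgroup $N_{k'}$ is free abelian on the basic commutators it contains, and since these commute the normal-form exponents add under multiplication. This yields the displayed identity, hence multiplicativity.

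Invertibility is then immediate: $\gamma(\mathbf 1)=\mathrm{id}_{\mathcal V}$ because $\tilde\rho(\mathbf 1)=\mathrm{id}$ and $\phi([\alpha_i])=\alpha_i$, so $\gamma(\beta)\gamma(\beta^{-1})=\mathrm{id}$ and $\gamma(\beta)\in GL(\mathcal{V})$. For order-independence I would reuse the same confinement: letting $\phi,\phi'$ be the linearizations attached to two orderings, the element $w=\tilde\rho(\beta)([\alpha])$ again lies in some abelian $N_k$, so its normal form in either order involves only the pairwise-commuting commutators of $N_k$, whose exponents coincide with the (order-free) coordinates of $w$ in the free abelian group $N_k$. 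Hence $\phi(w)=\phi'(w)$, giving $\gamma(\beta)(\alpha)=\gamma'(\beta)(\alpha)$ for every $\alpha\in\mathcal F$ and therefore $\gamma=\gamma'$.
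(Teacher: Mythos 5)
Your proof is correct and follows essentially the same route as the paper's: both reduce multiplicativity to the additivity of $\phi$ on the abelian subgroup $N_k$ singled out by Lemma \ref{lemgammarpz}, and both deduce order-independence from the fact that the commutators appearing in the normal form of $\tilde\rho(\beta)([\alpha])$ pairwise commute. Your explicit verification that $\gamma(\beta)$ is invertible (via $\gamma(\mathbf 1)=\mathrm{id}$ and multiplicativity) is a small addition that the paper leaves implicit.
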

\begin{proof}
Since $\phi$ is not a homomorphism in general, it is not clear that $\gamma$ is a representation. Yet we do have that $\gamma(\beta\beta')=\gamma(\beta)\gamma(\beta')$ for any two homotopy braids $\beta$ and $\beta'$.
Let $[\alpha]$ be a commutator in $\mathcal{F}$ and $\alpha$ its corresponding commutator in $\mathcal V$. We choose some $j\in \supp([\alpha])$ so that $[\alpha]$ is in $N_j$. Set $\gamma(\beta')(\alpha)=\sum_ie_i\alpha_i$ for some commutators $\alpha_i\in\mathcal{V}$ associated to the commutators $[\alpha_i]\in\mathcal{F}$ and some integers $e_i$. Then we have that
$$\gamma(\beta\beta')(\alpha)=\phi\circ\tilde\rho(\beta)\tilde\rho(\beta')([\alpha])=\phi\circ\tilde\rho(\beta)\Big(\prod_i[\alpha_i]^{e_i}\Big)=\phi\Big(\prod_i\tilde\rho(\beta)([\alpha_i])^{e_i}\Big).$$ 
Now, using Lemma \ref{lemgammarpz} we know that $[\alpha_i]$ is in $N_{\pi^{-1}(\beta')(j)}$ for any $i$. Besides, Lemma \ref{lemgammarpz} implies that any commutator in the normal form of $\tilde\rho(\beta)([\alpha_i])$ is in the abelian group $N_{\pi^{-1}(\beta\beta')(j)}$ for any $i$. But note that for $C_1,\ \ldots,\ C_k$ a collection of commutators in $\mathcal{F}$ such that $[C_i,C_j]=1$ for any $i,\ j$ we have that $\phi(C_1\cdots C_k)=\phi(C_1)+\cdots+\phi(C_k)$. Hence $\phi$ behaves like a homomorphism on the product $\prod_i\tilde\rho(\beta)([\alpha_i])^{e_i}$, and finally, $$\phi\Big(\prod_i\tilde\rho(\beta)([\alpha_i])^{e_i}\Big)=\sum_ie_i\phi\Big(\tilde\rho(\beta)([\alpha_i])\Big)=\sum_ie_i\gamma(\beta)(\alpha_i)=\gamma(\beta)\Big(\sum_i e_i(\alpha_i)\Big)=\gamma(\beta)\gamma(\beta')(\alpha).$$ This shows that $\gamma$ is a well defined homomorphism.

To prove the independence  on the chosen order on $\mathcal F$ we use Lemma \ref{lemgammarpz} again. For any $\beta\in\tilde B_n$ and any $[\alpha]\in\mathcal F$, all the commutators in the normal form of $\tilde\rho(\beta)([\alpha])$ commute with each other. In particular if we set two orderings $\{[\alpha_1],\ \ldots,\ [\alpha_m]\}$ and $\{[\alpha_{\sigma(1)}],\ \ldots,\ [\alpha_{\sigma(m)}]\}$ on $\mathcal F$ then the two associated normal forms $$\tilde\rho(\beta)([\alpha])=[\alpha_1]^{e_1}\cdots [\alpha_m]^{e_m}=[\alpha_{\sigma(1)}]^{e'_{\sigma(1)}}\cdots [\alpha_{\sigma(m)}]^{e'_{\sigma(m)}}$$ satisfy $e_i=e'_i$ for any $i$ and therefore $\phi\circ\tilde\rho=\phi'\circ\tilde\rho$ for the two linearization maps $\phi$ and $\phi'$ associated to the orderings.
\end{proof}
\begin{rmq}
The homomorphism $\gamma$ is in fact injective. Since $\phi$ is clearly injective, this can be shown using the injectivity of $\tilde\rho$, proved in \cite{HabeggerLinHomotopy}. However we will give below another proof of this result in Theorem \ref{thminj} using clasper calculus, which in turn reproves the injectivity of $\tilde{\rho}$. Furthermore our approach by clasper calculus allows for explicit computations of the representation, as shown in the next section. 
\end{rmq}
\subsubsection{Clasper interpretation}

We first give a topological interpretation of the Artin, resp. homotopy Artin, representation. We can see the free group $F_n$, resp. reduced free group $\mathcal RF_n$, on which $B_n$, resp. $\tilde B_n$, acts as the fundamental group, resp. the reduced fundamental group, of the complement of the $n$-component trivial braid. Therefore an element of $F_n$, resp. $\mathcal RF_n$, can also be seen as the homotopy, resp. the \emph{reduced homotopy}\footnote{Here by \emph{reduced homotopy class}, we mean the image in the reduced quotient of the homotopy class of an element.}, class of an $(n+1)$-th component in this complement. On the diagram, we place this new strand to the right of the braid and we label it by “$\infty$". Thus, the generators $x_i$ of $F_n$ (resp $\mathcal RF_n$) are given by the pure braids $A_{i\infty}$ shown in Figure \ref{claspgénérateur}, which can be reinterpreted with the comb-claspers $(i,\infty)$ depicted in the same figure. There and in subsequent figures, we simply represent with a circled “$\infty$" the leaf intersecting the $\infty$-th component.
\begin{figure}[!htbp]
    \centering
    \includegraphics[scale=0.7]{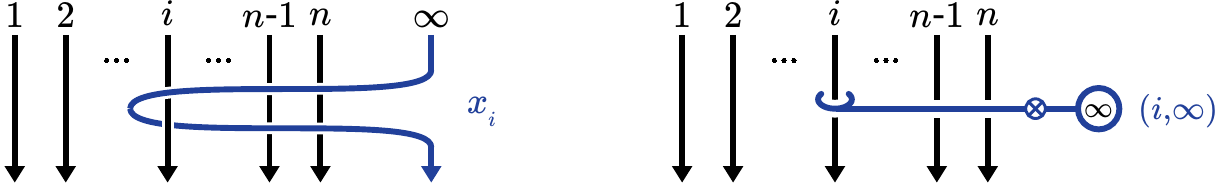}
    \caption{Pure braid and clasper interpretations of the generator $x_i$.}
    \label{claspgénérateur}
\end{figure}

In this context the image $\rho(\beta)$ of an element $\beta\in B_n$, resp. $\tilde B_n$, is given on a generator $x_i\in F_n$, resp. $\mathcal RF_n$, by considering the conjugation $\beta\mathbf1^{(i,\infty)} \beta^{-1}$ illustrated in Figure \ref{Artinrpz}. 
\begin{figure}[!htbp]
    \centering
    \includegraphics[scale=0.7]{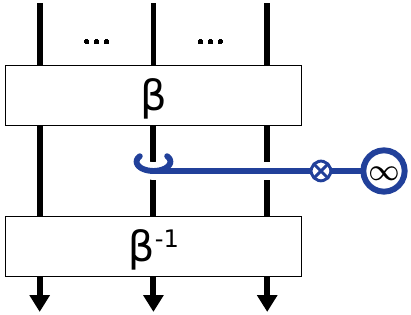}
    \caption{Clasper interpretation of the Artin representation.}
    \label{Artinrpz}
\end{figure}
Then we apply an isotopy, transforming $\beta\mathbf1\beta^{-1}$ into $\mathbf1$. By doing so the clasper $(i,\infty)$ is deformed into a new clasper which we are able to reinterpret as an element of $F_n$ or $\mathcal RF_n$. More precisely in the link-homotopic case we have a nice correspondence between the family $\mathcal{F}$ and the comb-claspers with $\infty$ in their support, by the following proposition.
\begin{prop}\label{propclapscomu}
Let $(\alpha)=(i_1\cdots i_{n-1}\infty)$ and $(\alpha')=(i_1\cdots i_{n-1}i_n\infty)$ be two comb-claspers. Then we have the relation: $$(\alpha')\sim[(\alpha),(i_n\infty)]=(\alpha)\cdot (i_n\infty)\cdot (\alpha)^{-1}\cdot (i_n\infty)^{-1}.$$
\end{prop}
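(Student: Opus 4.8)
The plan is to realize the four-fold product $[(\alpha),(i_n\infty)]=(\alpha)(i_n\infty)(\alpha)^{-1}(i_n\infty)^{-1}$ as a single stack of claspers and then cancel the two copies of $(\alpha)$ and the two copies of $(i_n\infty)$ against one another, leaving behind exactly one ``correction'' clasper that I will identify with $(\alpha')$. The starting observation is that $\supp(\alpha)\cap\supp((i_n\infty))=\{\infty\}$ consists of a single component, since the indices $i_1,\dots,i_n,\infty$ of the simple clasper $(\alpha')$ are pairwise distinct.

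First I would slide $(\alpha)^{-1}$ upward past $(i_n\infty)$ in order to make it adjacent to $(\alpha)$. By move (5) of Corollary \ref{lemclaspcalculus} the relative position of their edges is irrelevant, and by the first item of Remark \ref{rmqclaspcalculus} (coming from move (4)) all crossing changes between the edges of $(\alpha)^{-1}$ and those of $(i_n\infty)$ are free, their correction claspers having repeats and hence vanishing by Lemma \ref{lemfeuilledouble}. The only genuinely nontrivial step is the exchange of the two leaves lying on the $\infty$-component, performed with move (2). Here the hypothesis $|\supp(\alpha)\cap\supp((i_n\infty))|=1<2$ is crucial: contrary to the situation of the second item of Remark \ref{rmqclaspcalculus}, the correction clasper $\tilde T$ produced by move (2) does \emph{not} have repeats, so it survives up to link-homotopy. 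Once the leaves are exchanged, $(\alpha)$ and $(\alpha)^{-1}$ are parallel copies differing only by a twist and cancel by move (1).

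After these manipulations the product is link-homotopic to $(i_n\infty)\,\tilde T\,(i_n\infty)^{-1}$, the position of $\tilde T$ being immaterial since $\supp(\tilde T)$ contains the supports of the two degree-one factors, so that $\tilde T$ commutes with them up to link-homotopy by Remark \ref{rmqclaspcalculus}. As $\supp(\tilde T)\cap\supp((i_n\infty))=\{i_n,\infty\}$ has two elements, another application of Remark \ref{rmqclaspcalculus} lets me commute $(i_n\infty)$ past $\tilde T$ freely, and move (1) then cancels $(i_n\infty)(i_n\infty)^{-1}$, giving $[(\alpha),(i_n\infty)]\sim\tilde T$. It remains to recognise $\tilde T$: by construction it is the tree clasper obtained by grafting $(\alpha)$ and $(i_n\infty)$ at a new node created where their $\infty$-leaves were, i.e. the caterpillar whose minimal path is that of $(\alpha)$ extended by one node carrying the new $\infty$-leaf and an $i_n$-leaf. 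Up to repositioning edges (move (5)), normalising the ``leaf to the left'' convention at the new node (move (8) of Lemma \ref{lemtwists}), and bringing the twists onto the edge of the $\infty$-component (Lemma \ref{lemtwists} and Remark \ref{rmqtwist}), this is precisely the comb-clasper with ordered support $(i_1\cdots i_{n-1}i_n\infty)$, namely $(\alpha')$.

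The main obstacle I expect is this last identification, and specifically the bookkeeping of twists: I must check that grafting the twisted clasper $(\alpha)$ with the twisted clasper $(i_n\infty)$, while accounting for the twist carried by $\tilde T$ in the figure for move (2), yields the \emph{twisted} comb-clasper $(\alpha')$ and not its untwisted inverse, since these two are not link-homotopic. This is a finite check using moves (6) and (7) of Lemma \ref{lemtwists} together with Remark \ref{rmqtwist}; everything else is routine clasper calculus, and pleasantly no appeal to the IHX relation of Proposition \ref{lemIHX} is needed, because the grafting already lands on the left-normed caterpillar shape characterising a comb-clasper.
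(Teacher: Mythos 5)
Your proposal is correct and follows essentially the same route as the paper's proof: both realize the commutator as a stacked product, apply move (2) of Corollary \ref{lemclaspcalculus} to exchange the $\infty$-leaves of $(i_n\infty)$ and $(\alpha)^{-1}$ (noting that the resulting correction clasper survives precisely because the two supports meet only in $\{\infty\}$), cancel the pairs $(\alpha)(\alpha)^{-1}$ and $(i_n\infty)(i_n\infty)^{-1}$ after commuting the correction clasper out of the way via Remark \ref{rmqclaspcalculus}, and identify that correction clasper with $(\alpha')$. Your explicit flagging of the twist bookkeeping in the final identification is a point the paper simply asserts ("this move creates an extra comb-clasper, which is exactly $(\alpha')$"), so your write-up is if anything slightly more careful on that step.
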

For example in Figure \ref{exclaspcommutateur} we illustrate the equivalence $(1254\infty)\sim[(125\infty),(4\infty)]$.
\begin{figure}[!htbp]
     \centering
     \includegraphics[width=0.9\linewidth]{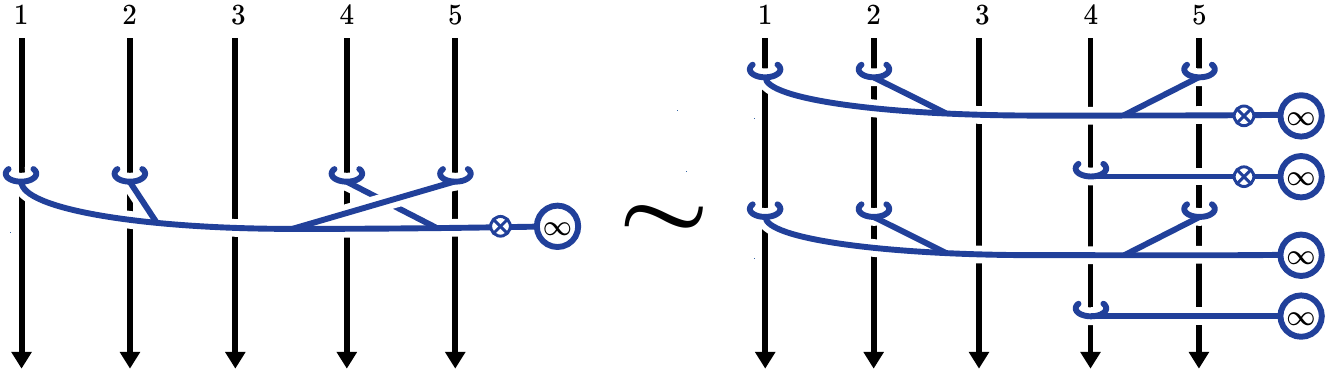}
     \caption{The comb-clasper $(1254\infty)$ is link-homotopic to the commutator $[(125\infty),(4\infty)]$.}
     \label{exclaspcommutateur}
\end{figure}
\begin{proof}
Consider the product of comb-claspers $\alpha\cdot (i_n\infty)\cdot \alpha^{-1}\cdot (i_n\infty)^{-1}$ (as for example on the right-hand side of Figure \ref{exclaspcommutateur}). First we use move $(2)$ from Proposition \ref{lemclaspcalculus} to exchange the $\infty$-th leaves of $(i_n\infty)$ and $(\alpha)^{-1}$; this move creates an extra comb-clasper, which is exactly $(\alpha')$. Now by Remark \ref{rmqclaspcalculus} we can freely move $(\alpha')$  and finish exchanging the edges of $(\alpha_n\infty)$ and $(\alpha)^{-1}$, thus obtaining the product $(\alpha)\cdot(\alpha)^{-1}\cdot(\alpha')\cdot(i_n\infty)\cdot(i_n\infty)^{-1}\sim(\alpha')$.
\end{proof}

By iterating this proposition we obtain a correspondence between the commutators $[\alpha]\in \mathcal F$ (or $\alpha\in\mathcal V$) and the comb-claspers $(\alpha,\infty)$. For example the equivalence $(1254\infty)\sim[[[(1\infty),(2\infty)],(5\infty)],(4\infty)]$ corresponds to $[1254]=[[[x_1,x_2],x_5],x_4]$ in $\mathcal RF_n$.

In this way, we obtain an explicit procedure to compute our representation $\gamma$ using clasper calculus, as follows. As illustrated in the proof of Theorem \ref{thmcalculgamma} below, the computation of $\gamma(\beta)(\alpha)$ with $\gamma$ the representation, $\beta\in\tilde B_n$ and $\alpha\in\mathcal V$, goes in 3 steps: 
\begin{description}
    \item[Step 1] Consider the conjugate of the comb-clasper $(\alpha,\infty)$ by the braid $\beta$.
    \item[Step 2] Use clasper calculus to re-express this conjugate as an ordered union of comb-claspers with $\infty$ in their support (the order comes from the order on $\mathcal F$).
    \item[Step 3] The number of parallel copies of a given comb-clasper in this product is the coefficient of the associated commutator in $\gamma(\beta)(\alpha)$. 
\end{description} 
We apply in Theorem \ref{thmcalculgamma} this procedure\footnote{A program that computes explicitly the representation $\gamma$ is available on \cite{siteweb}.} for each generator $\sigma_i\in\tilde B_n$ and each commutator in $\mathcal V$. The image of commutator $(i_1,i_2,\cdots,i_l):=\phi([i_1,i_2,\cdots,i_l])\in\mathcal V$ by the map $\gamma(\sigma_i)$ depends on the position of the indices $i$ and $i+1$ in the sequence $i_1,\ i_2,\ \ldots,\ i_l$. 

\begin{thm}\label{thmcalculgamma}
For suitable sequences $I$, $J,\ K$ in $\{1,\ \ldots,\ n\}\backslash\{i,\ i+1\}$, $I\neq\emptyset$, we have:
  \[ \gamma(\sigma_i): \left \{ \begin{array}{llll}
    (I)   &\mapsto&  (I) &(a) \\
    
    (J,i,K)   &\mapsto&  (J,i+1,K) &(b)\\

    (i+1,K)   &\mapsto&  (i,K) + (i,i+1,K)&(c)\\
    (I,i+1,K)   &\mapsto&  (I,i,K) + (I,i,i+1,K) - (I,i+1,i,K) &(d)\\
    
    (I,i,J,i+1,K)   &\mapsto& (I,i+1,J,i,K)&(e)\\
    (I,i+1,J,i,K)   &\mapsto& (I,i,J,i+1,K)&(f)\\
    (i,J,i+1,K)   &\mapsto& \sum_{J'\subseteq J}(-1)^{|J'|+1}(i,\overline{J'},i+1,J\backslash J',K)&(g)\\
 \end{array}
 \right.
 \]
where in (g), the sum is over all (possibly empty) subsequences $J'$ of $J$, and $\overline{J'}$ denotes the sequence obtained from $J'$ by reversing the order of its elements, see Example \ref{exsumseq}.
\end{thm}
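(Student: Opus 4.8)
The plan is to run, case by case, the three-step procedure recalled just before the statement: realize the generator $\alpha$ as the comb-clasper $(\alpha,\infty)$, form its conjugate by $\sigma_i$, and then use clasper calculus to rewrite the result as an ordered union of comb-claspers with $\infty$ in their support, the coefficients sought being the signed multiplicities of the various combs. The first reduction I would record is that conjugation by $\sigma_i$ disturbs only the leaves lying on strands $i$ and $i+1$; every other leaf, and the $\infty$-leaf, is left in place, so the entire computation is local to these two strands. On a single leaf the effect reproduces the homotopic Artin action $\tilde\rho(\sigma_i)$, as it must since $\gamma=\phi\circ\tilde\rho$ by Theorem \ref{thmgammarpz} and comb-claspers correspond to reduced basic commutators by Proposition \ref{propclapscomu}: an $i$-leaf slides cleanly across to become an $(i+1)$-leaf, whereas an $(i+1)$-leaf is dragged past strand $i$ and, by moves $(3)$--$(4)$ of Corollary \ref{lemclaspcalculus}, spawns a new comb-clasper carrying one extra leaf. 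The governing dichotomy is then Remark \ref{rmqclaspcalculus} together with Lemma \ref{lemfeuilledouble}: this spawned copy survives up to link-homotopy precisely when it does not create a repeat, and otherwise vanishes.

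With this principle the easy cases are immediate. In case $(a)$ no leaf meets $i$ or $i+1$, so $(\alpha,\infty)$ is untouched. In case $(b)$ the single $i$-leaf slides over to $i+1$ with no wrap, giving the one term $(J,i+1,K)$. In cases $(e)$ and $(f)$ both $i$ and $i+1$ already occur, separated by the nonempty prefix $I$; the copy spawned by the wrap then acquires a repeated leaf and dies by Lemma \ref{lemfeuilledouble}, leaving only the clean transposition $i\leftrightarrow i+1$. Cases $(c)$ and $(d)$ are where the spawned copy first survives, as $i$ is absent from the support: relabeling the $(i+1)$-leaf gives the leading term, while the surviving wrap supplies the extra combs, whose twists I would read off from the crossing to fix the signs. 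When $i+1$ is the smallest leaf, case $(c)$, the inserted leaf becomes the new minimum and a single extra comb $(i,i+1,K)$ appears; when $i+1$ is internal, case $(d)$, the insertion is not forced to one side, and a single use of move $(2)$ together with the IHX relation of Proposition \ref{lemIHX} produces the two extra combs $(I,i,i+1,K)$ and $(I,i+1,i,K)$ of opposite twist. Algebraically these are exactly the reduced-group identities $(i)$--$(ii)$ from the proof of Theorem \ref{basiscommuthm} applied once.

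The real work, and the step I expect to be the main obstacle, is case $(g)$. Here $i$ is the smallest leaf and $i+1$ is internal; after conjugation both persist, but the wrap of the internal $(i+1)$-leaf meets the leaf just relabeled onto strand $i+1$ and dies, so Step 2 leaves a single comb whose leaves appear in the order $i+1,J,i,K$ --- with the smallest index $i$ sitting behind the larger index $i+1$, so that it is not yet a legitimate comb-clasper. The heart of the proof is to \emph{re-comb} this object into standard form by pushing the $i$-leaf to the front; geometrically this is an iterated use of the IHX relation (Proposition \ref{lemIHX}) and leaf exchanges (move $(2)$), and algebraically it is the \emph{bring the minimum to the front} rewriting from the proof of Theorem \ref{basiscommuthm}, driven by the reduced Jacobi identity $(ii)$. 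I would organize it as an induction on $|J|$: peeling the last tooth of $J$ through identity $(ii)$ splits a term into one where that tooth migrates ahead of $i+1$, contributing to $\overline{J'}$ with a reversal of order and a change of twist, and one where it stays behind, contributing to $J\setminus J'$; this is exactly the recursion producing $\sum_{J'\subseteq J}(-1)^{|J'|+1}(i,\overline{J'},i+1,J\setminus J',K)$. The base case $|J|=0$ is the empty-middle computation already met in $(c)$--$(d)$, and throughout the block $K$ rides along passively, since every term retains an $i$-leaf and move $(4)$ (identity $(iii)$) lets one append $K$ without interaction. The genuinely delicate point will be bookkeeping the reversal $\overline{J'}$ and the sign $(-1)^{|J'|+1}$ across the induction, which I would first pin down by checking $|J|=1,2$ explicitly before committing to the inductive step.
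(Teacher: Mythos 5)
Your proposal follows essentially the same route as the paper: the three-step clasper procedure applied case by case, with (a)--(b) by isotopy, (c)--(d) by a leaf exchange (move (2)) plus one IHX relation, (e)--(f) by noting that any spawned clasper has a repeat and dies by Lemma \ref{lemfeuilledouble} (the paper phrases this as a crossing change permitted by Remark \ref{rmqclaspcalculus}), and (g) by first reducing the conjugate to a single non-comb clasper and then re-combing it by iterated IHX relations with twist bookkeeping. Your induction on $|J|$ for case (g) is exactly the paper's repeated IHX step (Figure \ref{calculgamma(g)'}), organized recursively rather than pictorially.
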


\begin{ex}\label{exsumseq}
If $J=(J_1,\ J_2,\ J_3)$ and $K=\emptyset$ in (g), then 
$\gamma(\sigma_i)$ maps $(i,J,i+1)$ to :
$$-(i,i+1,J_1,J_2,J_3)+(i,J_1,i+1,J_2,J_3)+(i,J_2,i+1,J_1,J_3)+(i,J_3,i+1,J_1,J_2)$$$$-(i,J_2,J_1,i+1,J_3)-(i,J_3,J_1,i+1,J_2)-(i,J_3,J_2,i+1,J_1)+(i,J_3,J_2,J_1,i+1).$$
The proof below explains how this follows from the IHX relations of Figure \ref{calculgamma(g)'}.
\end{ex}
\begin{proof}[Proof of Theorem \ref{thmcalculgamma}]
Following the procedure given above, we consider the conjugate $\sigma_i^{-1}(\alpha,\infty)\sigma_i$ and apply clasper calculus to turn it into a union of comb-claspers. 

For (a) it is clear that $(I,\infty)$ commutes with $\sigma_i$, passing over or next to it. The computation of (b)  is given by a simple isotopy of the braid shown in Figure \ref{calculgamma(b)}.
  \begin{figure}[!htbp]
    \centering
    \includegraphics[scale=0.46]{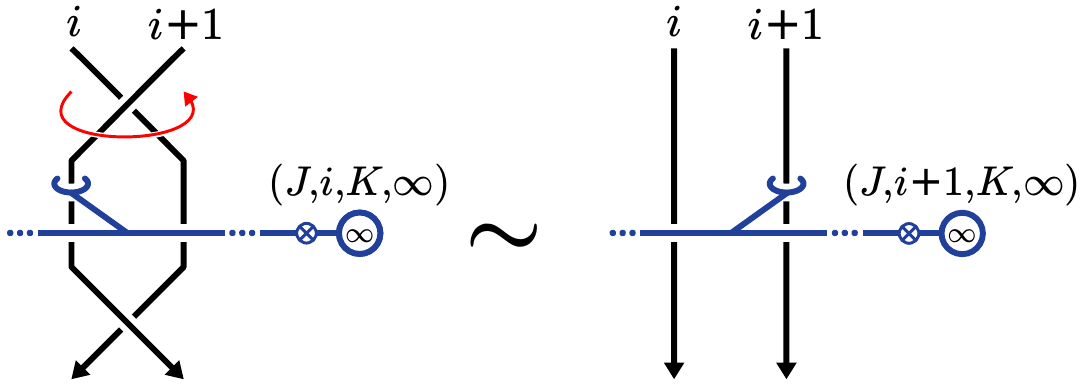}
    \caption{Computation of (b).}
    \label{calculgamma(b)}
\end{figure}

The proofs of (c) and (d) are similar and are given in Figures \ref{calculgamma(c)} and \ref{calculgamma(d)} respectively. There, the first equivalence is an isotopy, and the second one is given by move $(2)$ from Proposition \ref{lemclaspcalculus}. For (d) there is a further step given by an $IHX$ relation.
 \begin{figure}[!htbp]
    \centering
    \includegraphics[scale=0.46]{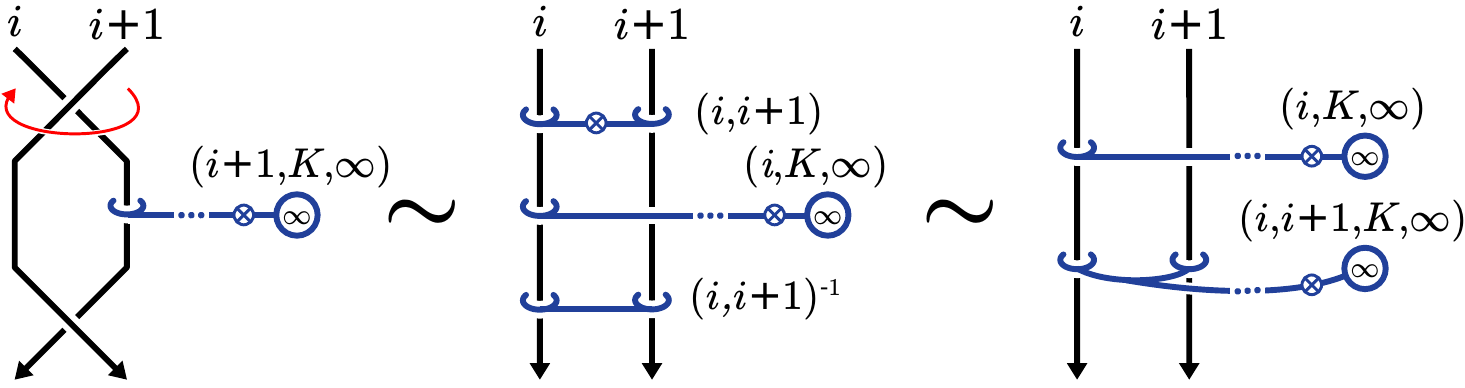}
    \caption{Computation of (c).}
    \label{calculgamma(c)}
\end{figure}
 \begin{figure}[!htbp]
    \centering
    \includegraphics[scale=0.46]{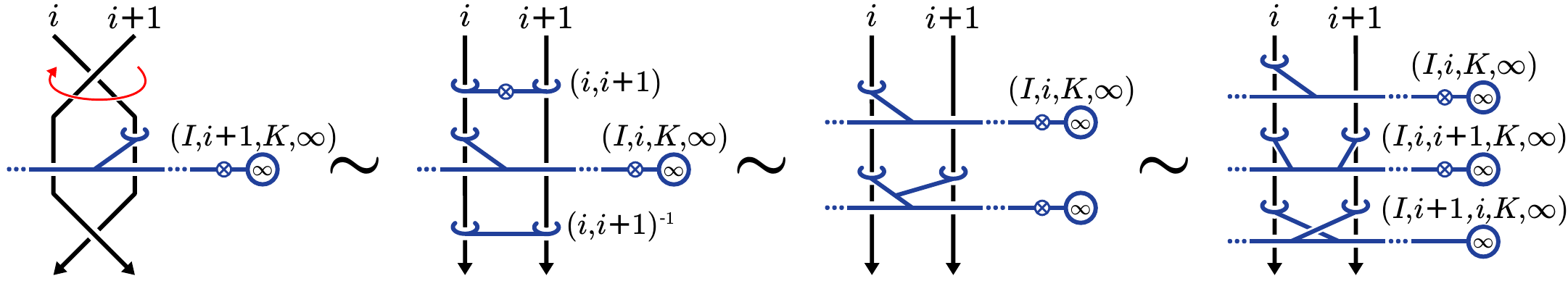}
    \caption{Computation of (d).}
    \label{calculgamma(d)}
\end{figure}

For (e) and (f) we apply the same isotopy as Figure \ref{calculgamma(b)} on components $i$ and $i+1$, thus interchanging  $(I,i,J,i+1,K)$ and $(I,i+1,J,i,K)$. Note that we also need a crossing change between the $(i+1)$-th component and a clasper edge, which is possible according to Remark \ref{rmqclaspcalculus}.
 
Proving (g) is the last and hardest part and goes in two steps. The first step is illustrated in Figure \ref{calculgamma(g)}: we proceed as before with an isotopy and a crossing change, then we use move (8) of Remark \ref{lemtwists}. This turns $\sigma_i (i,J,i+1,K,\infty) \sigma_i^{-1}$ into a new clasper which is not a comb-clasper.
 \begin{figure}[!htbp]
    \centering
    \includegraphics[scale=0.44]{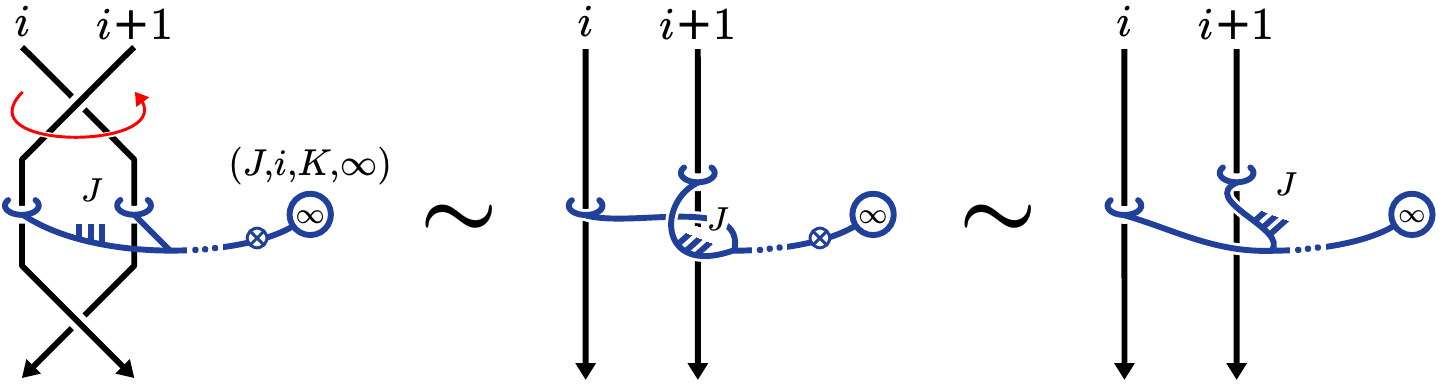}
    \caption{Turning $\sigma_i(i,J,i+1,K,\infty)\sigma_i^{-1}$ into a new clasper.}
    \label{calculgamma(g)}
\end{figure}

In the second step, we use the IHX relations repeatedly to turn this new clasper into a product of comb-claspers. This is illustrated in  Figure \ref{calculgamma(g)'} where $J=(J_1,J_2,J_3)$. We conclude by simplifying the twists with Remark \ref{rmqtwist}.
 \begin{figure}[!htbp]
    \centering
    \includegraphics[width=\linewidth  ]{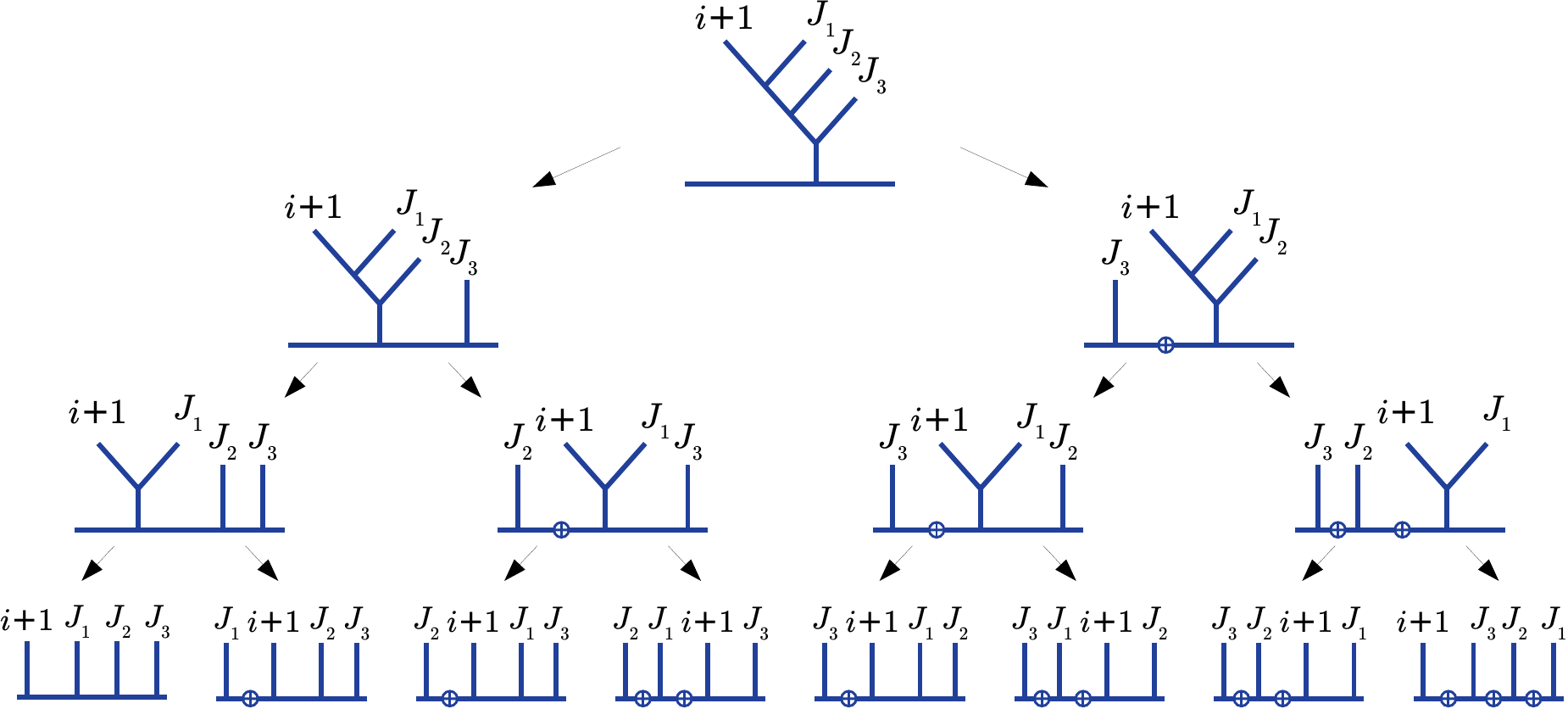}
    \caption{Iterated IHX relations.}
    \label{calculgamma(g)'}
\end{figure}
 \end{proof}
\begin{ex}\label{excalculgamma3comp}
We illustrate Theorem \ref{thmcalculgamma} on the 3-component homotopy braid group $\tilde{B}_3$. To do so, we set $(1),\ (2),\ (3),\ (12),\ (13),\ (23),\ (123),\ (132)$ to be the generators of $\mathcal V$, with the order of Example \ref{exnormalformcommu}, and we compute $\gamma$ on the Artin generators $\sigma_1,\ \sigma_2$:
 \[ \begin{array}{ll}
 \gamma(\sigma_1)(1)=(2),  &\gamma(\sigma_2)(1)=(1), \\
 \gamma(\sigma_1)(2)=(1)+(12),  &\gamma(\sigma_2)(2)=(3),\\
 \gamma(\sigma_1)(3)=(3), & \gamma(\sigma_2)(3)=(2)+(23),\\
 \gamma(\sigma_1)(12)=-(12), & \gamma(\sigma_2)(12)=(13),\\
 \gamma(\sigma_1)(13)=(23), &\gamma(\sigma_2)(13)=(12)+(123)-(132),\\
 \gamma(\sigma_1)(23)=(13)+(123),&\gamma(\sigma_2)(23)=-(23),\\
 \gamma(\sigma_1)(123)=-(123),&\gamma(\sigma_2)(123)=(132),\\
 \gamma(\sigma_1)(132)=-(123)+(132), &\gamma(\sigma_2)(132)=(123).\\
 \end{array} \]
That gives us the following matrices:
\[\begin{array}{cc}
\gamma(\sigma_1)=\begin{pmatrix}
0 & 1 & 0 & 0 & 0 & 0 & 0 & 0  \\
1 & 0 & 0 & 0 & 0 & 0 & 0 & 0  \\
0 & 0 & 1 & 0 & 0 & 0 & 0 & 0  \\
0 & 1 & 0 & -1 & 0 & 0 & 0 & 0  \\
0 & 0 & 0 & 0 & 0 & 1 & 0 & 0 \\
0 & 0 & 0 & 0 & 1 & 0 & 0 & 0\\
0 & 0 & 0 & 0 & 0 & 1 & -1 & -1 \\
0 & 0 & 0 & 0 & 0 & 0 & 0 & 1 
\end{pmatrix}, 
&\gamma(\sigma_2)=\begin{pmatrix}
1 & 0 & 0 & 0 & 0 & 0 & 0 & 0  \\
0 & 0 & 1 & 0 & 0 & 0 & 0 & 0  \\
0 & 1 & 0 & 0 & 0 & 0 & 0 & 0  \\
0 & 0 & 0 & 0 & 1 & 0 & 0 & 0  \\
0 & 0 & 0 & 1 & 0 & 0 & 0 & 0 \\
0 & 0 & 1 & 0 & 0 & -1 & 0 & 0\\
0 & 0 & 0 & 0 & 1 & 0 & 0 & 1 \\
0 & 0 & 0 & 0 & -1 & 0 & 1 & 0 
\end{pmatrix}.
\end{array}
\]
\end{ex}

The global shape of these matrices was predicted by Theorem \ref{thmcalculgamma}. Indeed in general we have the following.

\begin{prop}\label{propgammamatrix}
For $\beta\in\tilde{B}_n$ a homotopy braid, the matrix associated to $\gamma(\beta)$ in the basis $\mathcal{F}$, endowed with the order of Example \ref{exnormalformcommu}, is given by a lower triangular block matrix of the following form:
\[\begin{pmatrix}
B_{1,1} & 0 & \cdots & 0\\
B_{2,1} & B_{2,2} & \cdots & 0\\
\vdots&  \vdots & \ddots & \vdots\\
B_{n,1} & B_{n,2} & \cdots & B_{n,n}
\end{pmatrix}\]
where $B_{i,i}$ is a finite order matrix of size $rk(\mathcal{V}_i)=\sum_{i-1}^{n-1}\frac{k!}{(k-i+1)!}$ which is the identity when $\beta$ is pure. Moreover $B_{1,1}$ corresponds to the left action by permutation $k \mapsto\pi^{-1}(\beta)(k)$, and $B_{2,2}$ corresponds to the left action on the set $\{(k,\ j)\}_{k<j}$ given by:
\[(k,\ j)\mapsto\left\{\begin{array}{cc}
    \big(\pi^{-1}(\beta)(k),\ \pi^{-1}(\beta)(j)\big)& \mbox{if } \pi^{-1}(\beta)(k)<\pi^{-1}(\beta)(j),\\
    -\big(\pi^{-1}(\beta)(j),\ \pi^{-1}(\beta)(k)\big)& \mbox{if }\pi^{-1}(\beta)(j)<\pi^{-1}(\beta)(k). \end{array}\right.\]
\end{prop}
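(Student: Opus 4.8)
The plan is to reduce everything to the generators $\sigma_i$ via the homomorphism property of $\gamma$ (Theorem \ref{thmgammarpz}) together with the explicit formulas of Theorem \ref{thmcalculgamma}, and then to read off the block structure from the weight grading underlying the order of Example \ref{exnormalformcommu}. The first observation is that in each of the cases (a)--(g) of Theorem \ref{thmcalculgamma}, a reduced basic commutator of weight $w$ is sent to a $\mathbf Z$-linear combination of commutators of weight $\geq w$: cases (a), (b), (e), (f) and (g) preserve the weight exactly (they only permute the entries of the sequence), while (c) and (d) produce one term of the same weight together with terms of weight $w+1$. Since the order of Example \ref{exnormalformcommu} lists commutators by increasing weight, this means each matrix $\gamma(\sigma_i)$ is block lower triangular with diagonal blocks indexed by weight, the $i$-th diagonal block acting on the span $\mathcal V_i$ of weight-$i$ commutators (whose rank is given by Lemma \ref{lemrank}). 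As block lower triangular elements of $GL(\mathcal V)$ with invertible diagonal blocks are closed under products and inverses, I would conclude that $\gamma(\beta)$ is block lower triangular for every $\beta\in\tilde B_n$, and that $\beta\mapsto B_{i,i}(\beta)$ is a group homomorphism $\tilde B_n\to GL(\mathcal V_i)$, the diagonal block of a product being the product of the diagonal blocks.

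Next I would prove the two properties of the diagonal blocks. The key is to identify $B_{i,i}$ with the action induced by $\tilde\rho(\beta)$ on the $i$-th graded quotient of the lower central series filtration of $\mathcal RF_n$, which Theorem \ref{basiscommuthm} identifies with $\mathcal V_i$. For a pure braid $\beta$, the homotopic Artin representation sends each generator to a conjugate of itself, $\tilde\rho(\beta)(x_j)=w_jx_jw_j^{-1}$; since $w_jx_jw_j^{-1}\equiv x_j$ modulo commutators of weight $\geq 2$ and the bracket respects the grading, substituting into a weight-$i$ commutator leaves its weight-$i$ part unchanged. Hence $B_{i,i}(\beta)=\mathrm{Id}$ whenever $\beta$ is pure. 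The homomorphism $\beta\mapsto B_{i,i}(\beta)$ therefore factors through $\tilde B_n/\tilde P_n\cong S_n$, which is finite, so every $B_{i,i}(\beta)$ has finite order.

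For the explicit description of $B_{1,1}$ and $B_{2,2}$, I would again test on the generators and extend by multiplicativity. On weight-one commutators $(k)$, the relevant cases are (b) giving $(i)\mapsto(i+1)$, (c) whose weight-one part gives $(i+1)\mapsto(i)$, and (a) fixing the other $(k)$; thus $B_{1,1}(\sigma_i)$ is the transposition of $i$ and $i+1$, which matches the permutation action $k\mapsto\pi^{-1}(\beta)(k)$ in accordance with Lemma \ref{lemgammarpz}. On weight-two commutators $(k,j)$ with $k<j$, the weight-preserving parts of (a), (b), (c), (d) realize the ordered relabelling by $\pi^{-1}(\sigma_i)=(i,i+1)$ whenever this relabelling keeps the pair increasing, while case (g) applied to $(i,i+1)$ gives exactly $-(i,i+1)$; this is precisely the signed permutation of the statement, the sign appearing exactly when the transposition reverses the order of the pair. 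Extending by the homomorphism property yields the stated formula for $B_{2,2}(\beta)$ for arbitrary $\beta$.

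The bulk of the argument is routine bookkeeping against Theorem \ref{thmcalculgamma}; I expect the only genuinely delicate point to be the clean identification of $B_{i,i}$ with the action on the associated graded, together with the verification that a conjugating automorphism induces the identity there, since this single fact simultaneously yields both the finiteness of the diagonal blocks and their triviality on pure braids.
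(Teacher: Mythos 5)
Your proposal is correct, and for the two computational parts (the triangular shape and the identification of $B_{1,1}$, $B_{2,2}$) it coincides with the paper's argument: both deduce triangularity from the fact that Theorem \ref{thmcalculgamma} never decreases weight while the order of Example \ref{exnormalformcommu} respects weight, take the block sizes from Lemma \ref{lemrank}, and verify the two diagonal blocks on the generators $\sigma_i$. The genuine divergence is in the proof that $B_{i,i}=\mathrm{Id}$ for pure braids. The paper stays inside clasper calculus: it reduces to the pure braid generators $A_{ij}=\mathbf 1^{(ij)}$ and observes, via Corollary \ref{lemclaspcalculus}, that conjugating a comb-clasper $(\alpha,\infty)$ by $(ij)$ only creates claspers of strictly higher degree, whence $\gamma(A_{ij})(\alpha)=(\alpha)+(\text{higher weight terms})$. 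You instead argue algebraically: a pure braid acts through $\tilde\rho$ as a conjugating automorphism $x_j\mapsto w_jx_jw_j^{-1}$, and such automorphisms act trivially on the associated graded of the lower central series of $\mathcal RF_n$, which you identify with $\bigoplus_i\mathcal V_i$. This route is valid, and it buys something the paper leaves implicit: since $\beta\mapsto B_{i,i}(\beta)$ is a homomorphism killing $\tilde P_n$, it factors through the finite group $S_n$, which is the cleanest justification of the ``finite order'' claim in the statement. The price is that you rely on facts not proved in the paper (though standard, and the first is implicit in the paper's clasper picture of the Artin action): that $\tilde\rho(\beta)$ is conjugating for $\beta$ pure, and the commutator-calculus lemma that replacing each $x_j$ by $x_j\cdot(\text{weight}\geq 2)$ does not change the leading graded term, together with the identification of $\mathcal V_i$ with the $i$-th graded quotient (which needs Theorem \ref{basiscommuthm} plus the observation that the rewriting procedure there is weight non-decreasing). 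You rightly single out this identification as the delicate point; the paper's clasper argument avoids it entirely, at the cost of being tied to the clasper machinery.
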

\begin{proof}
The triangular shape is a direct consequence of Theorem \ref{thmcalculgamma}. Indeed, the chosen order respects the weight, and Theorem \ref{thmcalculgamma} shows that $\gamma$ maps a commutator of weight $k$ to a sum of commutators of weight at least $k$. Proposition \ref{lemrank} gives the size of the square diagonal blocks $B_{i,i}$. The fact that these diagonal blocks are the identity when $\beta$ is a pure braid may need some more explanations. We only need to show this result on the generators $\beta=A_{i,j}=\mathbf1^{(i,j)}$. By Proposition \ref{lemclaspcalculus}, conjugating $(\alpha,\infty)$ by $(i,j)$ may only create a clasper $(\alpha',\infty)$ of strictly higher degree. This shows that $\gamma(\beta)(\alpha)=(\alpha)+\mbox{(strictly higher weight commutators)}$ so that $B_{i,i}$ is the identity. The block matrix $B_{1,1}$ describes the action on degree one comb-claspers modulo claspers of higher degree: the claim follows on an easy verification on the generators $\sigma_i$. Similarly the claim on the block matrix $B_{2,2}$ amounts to focusing on degree two comb-claspers.
\end{proof}
In order to prove the injectivity of $\gamma$ we need the following preparatory lemma. 
\begin{lem}\label{lemsondage}
Let $(i_1,\cdots,i_l)$ be a comb-clasper. We have $$\gamma\big(\mathbf{1}^{(i_1,\cdots,i_l)}\big)(i_l)=(i_l)-(i_1,\cdots,i_l),$$ where, on the right-hand side, $(i_1,\cdots,i_l)$ now denotes the corresponding commutator in $\mathcal V$.
\end{lem}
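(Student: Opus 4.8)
The plan is to compute $\gamma\big(\mathbf{1}^{(i_1,\cdots,i_l)}\big)(i_l)$ directly by the three-step clasper procedure described before Theorem \ref{thmcalculgamma}, applied to the pure braid $\beta=\mathbf{1}^{(i_1,\cdots,i_l)}$ and the commutator $\alpha=(i_l)$, which corresponds to the single-leaf comb-clasper $(i_l,\infty)$. First I would form the conjugate $(i_1,\cdots,i_l)\cdot(i_l,\infty)\cdot(i_1,\cdots,i_l)^{-1}$, thought of as a product of three comb-claspers sitting against the trivial braid with the auxiliary $\infty$-strand placed to the right. The key geometric observation is that the comb-clasper $(i_1,\cdots,i_l)$ has an $i_l$-leaf, and $(i_l,\infty)$ also has an $i_l$-leaf; so the supports of $(i_1,\cdots,i_l)$ and $(i_l,\infty)$ intersect (in component $i_l$), which by Remark \ref{rmqclaspcalculus} licenses both crossing changes and leaf exchanges between these two claspers up to link-homotopy.

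The heart of the computation is to exchange the $i_l$-leaf of $(i_l,\infty)$ past the $i_l$-leaf of $(i_1,\cdots,i_l)^{-1}$ using move $(2)$ of Corollary \ref{lemclaspcalculus}, exactly in the spirit of the proof of Proposition \ref{propclapscomu}. This exchange produces an extra comb-clasper, and the whole point is to identify it: by construction its support is $\{i_1,\cdots,i_{l-1},i_l,\infty\}$, its nodes are those inherited from $(i_1,\cdots,i_l)$ together with the new node created at the leaf exchange, and after normalizing by Lemma \ref{lemtwists} and the IHX considerations of Lemma \ref{lemclasparecombclaps} it is precisely the comb-clasper $(i_1,\cdots,i_l,\infty)$ — which under the clasper/commutator dictionary established after Proposition \ref{propclapscomu} corresponds to the commutator $(i_1,\cdots,i_l)\in\mathcal V$. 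I would then move this new clasper out of the way (again by Remark \ref{rmqclaspcalculus}) and finish cancelling $(i_l,\infty)$ against its inverse, so that the conjugate becomes, up to link-homotopy,
\[
(i_l,\infty)\ \cup\ (i_1,\cdots,i_l,\infty)^{-1},
\]
where the sign/twist bookkeeping records that the extra clasper appears with a negative coefficient.

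Reading off Step 3 of the procedure, the coefficient of the weight-one commutator $(i_l)$ is $1$ and the coefficient of the commutator $(i_1,\cdots,i_l)$ is $-1$, which yields exactly $\gamma\big(\mathbf{1}^{(i_1,\cdots,i_l)}\big)(i_l)=(i_l)-(i_1,\cdots,i_l)$, as claimed. The main obstacle I anticipate is the careful identification of the extra comb-clasper created by the leaf exchange: one must verify that the cyclic orders at the nodes, the side on which the new edge leaves (the "to the left" condition of Definition \ref{defcombclasp}), and the placement of twists all normalize to the comb-clasper $(i_1,\cdots,i_l,\infty)$ rather than to some reordered variant or to a clasper carrying an unexpected sign. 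A secondary subtlety is ensuring that no \emph{other} comb-claspers survive: any further claspers produced by the intermediate crossing changes must have repeats (they share the support component $i_l$) and hence vanish by Lemma \ref{lemfeuilledouble}, so that the final answer has exactly these two terms and no higher-weight corrections.
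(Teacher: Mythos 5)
Your proposal is correct and follows essentially the same route as the paper: form the conjugate $(i_1,\cdots,i_l)\cdot(i_l,\infty)\cdot(i_1,\cdots,i_l)^{-1}$, apply move $(2)$ of Corollary \ref{lemclaspcalculus} to the adjacent $i_l$-leaves to create the extra comb-clasper $(i_1,\cdots,i_l,\infty)^{-1}$, dispose of all other byproducts via the repeats argument, and read off the coefficients. One small wording slip: the final cancellation is of $(i_1,\cdots,i_l)$ against $(i_1,\cdots,i_l)^{-1}$ — the clasper $(i_l,\infty)$ has no inverse in the product and survives, exactly as your displayed formula correctly shows — not of $(i_l,\infty)$ against its inverse.
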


\begin{proof}
Consider the product $(i_1,\cdots,i_l)(i_d,\infty)(i_1,\cdots,i_l)^{-1}$ and re-express it with only comb-claspers with $\infty$ in their support. To do so, as illustrated in Figure \ref{figuresondage}, we apply move $(2)$ from Proposition \ref{lemclaspcalculus} on the leaves on the $i_d$-th component, which introduces the comb-clasper $(i_1,\cdots,i_l,\infty)^{-1}$, and we simplify $(i_1,\cdots,i_l)$ and $(i_1,\cdots,i_l)^{-1}$.
\begin{figure}[!htbp]
    \centering
    \includegraphics[scale=0.7]{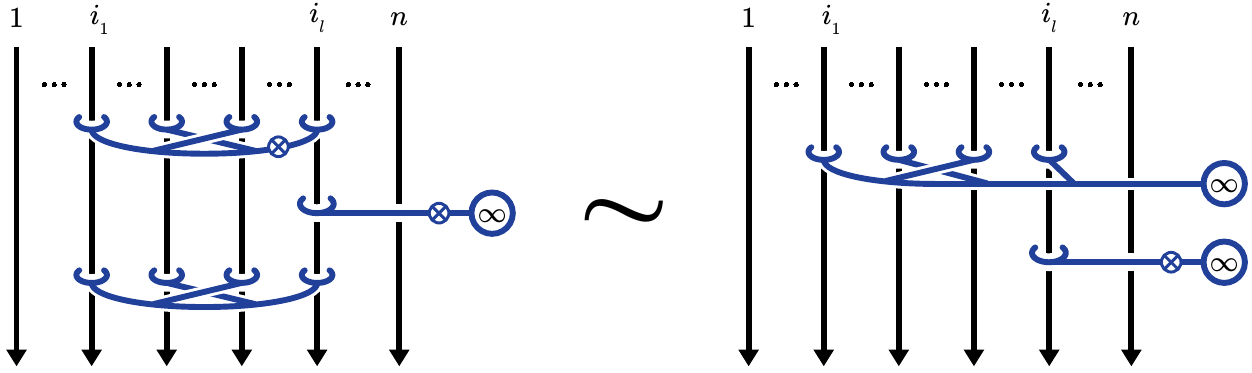}
    \caption{Proof of Lemma \ref{lemsondage}.}
    \label{figuresondage}
\end{figure}
\end{proof}
We can now state the injectivity of the representation $\gamma$.
\begin{thm}\label{thminj}
The representation $\gamma:\tilde B_n\mapsto GL(\mathcal{V})$ is injective.
\end{thm}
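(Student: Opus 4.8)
The plan is to show that $\gamma$ has trivial kernel: since $\gamma$ is a homomorphism (Theorem \ref{thmgammarpz}), it suffices to prove that $\gamma(\beta)=\mathrm{Id}$ forces $\beta=1$ in $\tilde B_n$. First I would reduce to the pure case. By Proposition \ref{propgammamatrix} the diagonal block $B_{1,1}$ of $\gamma(\beta)$ is the permutation matrix of $k\mapsto\pi^{-1}(\beta)(k)$; if $\gamma(\beta)=\mathrm{Id}$ then $B_{1,1}=\mathrm{Id}$, so $\pi(\beta)$ is trivial and $\beta\in\tilde P_n$. By Theorem \ref{normalformex} I may then fix a normal form $\beta=\prod_j(\alpha_j)^{\nu_j}$ as a stacked reduced product of comb-claspers, and the goal becomes to prove that every clasp-number $\nu_j$ vanishes. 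I would argue by induction on the degree $d$, assuming that all clasp-numbers carried by comb-claspers of degree $<d$ already vanish (the base case $d=1$ being vacuous, as $1$ is the minimal degree of a clasper).

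Under this hypothesis $\beta$ is a product of comb-claspers of degree $\geq d$, and the heart of the argument is a weight-filtration computation. Proposition \ref{propgammamatrix} shows that for a pure braid $\gamma$ acts as the identity plus a strictly weight-raising part, and homotopy clasper calculus (move $(2)$ of Corollary \ref{lemclaspcalculus}, which grafts the conjugating clasper and thereby adds its degree) shows that conjugation by a degree-$e$ comb-clasper raises the weight of each basic commutator by at least $e$. Writing $\gamma((\alpha_j))^{\nu_j}=\mathrm{Id}+\nu_j M_j+(\text{higher})$, where $M_j$ is the part raising weight by exactly $d$ (hence zero unless $\deg(\alpha_j)=d$), the composite $\gamma(\beta)$ sends a weight-one generator $(k)\in\mathcal V$ to $(k)+\sum_{\deg(\alpha_j)=d}\nu_j M_j(k)$ modulo terms of weight $>d+1$: every remaining contribution — a cross-product of two weight-raising factors, the subleading part of a degree-$d$ factor, or the leading part of a degree-$>d$ factor — raises the weight of $(k)$ by strictly more than $d$. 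Extracting the weight-$(d+1)$ component yields $[\gamma(\beta)(k)]_{d+1}=\sum_{\deg(\alpha_j)=d}\nu_j\,[M_j(k)]_{d+1}$, which is linear in the degree-$d$ clasp-numbers.

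It then remains to invert this linear expression, and here Lemma \ref{lemsondage} is exactly the right probe. For a support $S$ of size $d+1$ I would take $k=\max(S)$. A support-tracking argument via clasper calculus shows that conjugating $(k,\infty)$ by $(\alpha_j)$ does nothing when $k\notin\supp(\alpha_j)$ (the two claspers have disjoint support and so commute, Remark \ref{rmqclaspcalculus}), and otherwise produces only comb-claspers whose support is exactly $\supp(\alpha_j)$; hence the part of $[\gamma(\beta)(\max S)]_{d+1}$ supported on $S$ receives contributions only from comb-claspers with support exactly $S$. For each such $(\alpha_j)$ one has $\max(\supp(\alpha_j))=\max(S)=k$, so Lemma \ref{lemsondage} gives $[M_j(k)]_{d+1}=-(\alpha_j)$ with no other weight-$(d+1)$ term. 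As distinct comb-claspers with support $S$ are distinct basis commutators in $\mathcal V$, the $S$-supported part of $[\gamma(\beta)(\max S)]_{d+1}$ equals $-\sum_{\supp(\alpha_j)=S}\nu_j(\alpha_j)$; setting it to $0$ forces all these clasp-numbers to vanish. Letting $S$ range over all size-$(d+1)$ supports kills every degree-$d$ clasp-number, which completes the induction and shows $\beta=1$.

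I expect the main obstacle to be the support-localization step. One must establish, using only homotopy clasper calculus, the two coupled facts that conjugation by a degree-$d$ comb-clasper raises weight by at least $d$ \emph{and} that the newly created comb-claspers inherit exactly the support of the conjugator. Together these guarantee that probing at $k=\max(S)$ reads off precisely the clasp-numbers of support $S$ with no cross-contamination from other supports, so that the linear map from degree-$d$ clasp-numbers to the weight-$(d+1)$ data is, support by support, the identity up to sign. Verifying this cleanly — rather than through an opaque matrix computation — is where the argument requires the most care.
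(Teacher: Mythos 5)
Your overall architecture is the same as the paper's: reduce to the pure case via Proposition \ref{propgammamatrix}, take a normal form (Theorem \ref{normalformex}), induct on the degree, and use Lemma \ref{lemsondage} as a probe at the maximal component of each support. However, there is a genuine error in your support-localization step, and it sits exactly where you predicted the difficulty would be. You claim that conjugating $(k,\infty)$ by a comb-clasper $(\alpha_j)$ does nothing when $k\notin\supp(\alpha_j)$ because ``the two claspers have disjoint support and so commute, Remark \ref{rmqclaspcalculus}''. This is false, and it reads Remark \ref{rmqclaspcalculus} backwards: the remark allows free crossing changes precisely when the supports \emph{do} intersect (the clasper $\tilde T$ created by move $(4)$ then has repeats and dies by Lemma \ref{lemfeuilledouble}); when the supports are disjoint, $\tilde T$ has no repeats and survives. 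Concretely, in $\tilde P_4$ with strands $1,2,3,\infty$, the presentation of Corollary \ref{coropresentation} gives $[A_{13},A_{2\infty}]=[[A_{3\infty},A_{1\infty}],A_{2\infty}]$, a nontrivial weight-three commutator, so that $\gamma(\mathbf{1}^{(13)})(2)=(2)\mp(132)\neq(2)$ even though $2\notin\supp(13)$: disjoint-support claspers do not commute up to link-homotopy. The same error recurs in the second of your closing ``two coupled facts'': the newly created claspers do \emph{not} inherit exactly the support of the conjugator; their support is the union $\supp(\alpha_j)\cup\{k\}$, and the fact as you state it cannot be proved.

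The conclusion you want nevertheless survives, for a reason you did not give: when $k\notin\supp(\alpha_j)$ the spurious commutators have support $\supp(\alpha_j)\cup\{k\}$ of cardinality $\degg(\alpha_j)+2$, hence weight at least $d+2$, so they are invisible in your weight-$(d+1)$ extraction; when $k\in\supp(\alpha_j)$ the created commutators have support exactly $\supp(\alpha_j)$, which is what your probe needs. The correct organizing statement is therefore that conjugation never shrinks support, together with the observation that a non-vanishing commutator with support of size $s$ has weight at least $s$. This is also how the paper sidesteps the issue: instead of a weight filtration it uses the localized maps $\gamma_I=p_I\circ\gamma|_{\mathcal V_I}$, for which any term whose support grows beyond $I$ is killed by the projection $p_I$ regardless of its weight, so that only the inclusion $\gamma(\tilde P_n)(\mathcal V\setminus\mathcal V_I)\subset\mathcal V\setminus\mathcal V_I$ and the identity $\gamma_I(\mathbf{1}^{(\alpha)})=\mathrm{Id}$ for $\supp(\alpha)\not\subset I$ are needed. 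With your justification repaired along these lines, your filtration argument becomes a faithful translation of the paper's proof.
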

\begin{proof}

Let $\beta\in\tilde B_n$ be such that $\gamma(\beta)=Id$. First, Proposition \ref{propgammamatrix} imposes that $\beta$ is a pure braid; indeed the block $B_{1,1}$ must be the identity, which means that the permutation $\pi(\beta)$ is trivial.

According to Theorem \ref{normalformex} we can consider a normal form for $\beta$:
$$\beta=\prod(\alpha)^{\nu_{\alpha}}.$$

Let $I\subset\{1,\ \ldots,\ n\}$ be a sequence of indices with largest index $m$. Let also $\mathcal{V}_I$ be the subspace of $\mathcal V$ spanned by commutators with support included in $I$. We can then define the associated projection $p_I:\mathcal V \to\mathcal V_I$, and its composition with the restriction of $\gamma$ on $\mathcal V_I$, denoted by $\gamma_I:=p_I\circ\gamma_{|_{\mathcal V_I}}$. Note that it corresponds to keeping only the components with index in $I$.
It is clear using Proposition \ref{lemclaspcalculus} that $\gamma(\tilde P_n)(\mathcal{V}\setminus\mathcal V_I)\subset\mathcal{V}\setminus\mathcal V_I$, thus for $\beta_1,\ \beta_2\in\tilde P_n$ we have that $\gamma_I(\beta_1\beta_2)=\gamma_I(\beta_1)\gamma_I(\beta_2)$. 
Moreover $\gamma_I(\mathbf1^{(\alpha)})=Id$ for any comb-clasper $(\alpha)$ with $\supp(\alpha)\not\subset I$. Hence $\gamma_I(\beta)=\gamma_I(\beta')$ for $\beta'$ defined by:
$$\beta'=\prod_{\supp(\alpha)\subset I}(\alpha)^{\nu_{\alpha}}.$$
 
Now we show by strong induction on the degree of $(\alpha)$ that $\nu_\alpha=0$. For the base case we consider $I$ of the form $I=\{i,\ m\}$. Using Lemma \ref{lemsondage} we obtain:
\begin{align*}
    \gamma_I(\beta')(m)&=\gamma_I\big(\mathbf1^{(im)^{\nu_{im}}}\big)(m),\\
                    &=(m)-\nu_{im}\cdot(im).
\end{align*}
Because $\beta\in$ Ker$(\gamma)$, we have that $\gamma_I(\beta)(m)=(m)$, and this implies that $\nu_{\alpha}=0$ for any $(\alpha)$ of degree one. To prove that $\nu_{\alpha}=0$ for any $(\alpha)$ of degree $k$ we take $I$ of length $k+1$ and using the induction hypothesis, we get then:
$$\beta'=\prod_{\supp(\alpha)= I}(\alpha)^{\nu_{\alpha}}.$$
Thus thanks to Lemma \ref{lemsondage} we finally obtain: 
$$\gamma_I(\beta')(m)=(m)-\sum_{\supp(\alpha)= I}\nu_\alpha\cdot(\alpha).$$
Because $\beta\in$ Ker$(\gamma)$ we have that $\gamma_I(\beta)(m)=(m)$, and this implies $\nu_{\alpha}=0$ for any $(\alpha)$ of support $I$. Repeating the argument for any $I\subset\{1,\ \ldots,\ n\}$ of length $k+1$, we get that $\nu_{\alpha}=0$ for any $(\alpha)$ of degree $k$, which concludes the proof.
\end{proof}

\begin{cor}\label{normalformunicity}
The normal form is unique in $\tilde B_n$, i.e. if $\beta=\prod(\alpha)^{\nu_{\alpha}}=\prod(\alpha)^{\nu'_{\alpha}}$ are two normal forms of $\beta$ for a given order on the set of twisted comb-claspers, then $\nu_{\alpha}=\nu'_{\alpha}$ for any $(\alpha)$.
\end{cor}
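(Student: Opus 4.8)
The plan is to upgrade Theorem \ref{thminj} from a bare injectivity statement into an explicit recovery of the coefficients of a normal form. Since $\gamma$ is injective and $\gamma(\beta)$ depends only on the homotopy braid $\beta$, it suffices to show that each coefficient $\nu_\alpha$ of a normal form $\beta=\prod(\alpha)^{\nu_\alpha}$ can be extracted from the single datum $\gamma(\beta)$; two normal forms of the same $\beta$ will then necessarily carry the same coefficients. I stress that this does \emph{not} follow formally from Theorem \ref{thminj} alone, because the assignment $(\nu_\alpha)\mapsto\prod(\alpha)^{\nu_\alpha}$ is not a homomorphism, so knowing that it is trivial at the identity is not enough to deduce full injectivity.

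Concretely, I would fix a sequence $I\subseteq\{1,\cdots,n\}$ with largest index $m$ and reuse the restriction-projection $\gamma_I=p_I\circ\gamma_{|_{\mathcal V_I}}$ from the proof of Theorem \ref{thminj}. Since $\gamma_I$ is multiplicative on $\tilde P_n$ and annihilates every factor $(\alpha)$ with $\supp(\alpha)\not\subset I$, we have $\gamma_I(\beta)(m)=\gamma_I(\beta')(m)$ with $\beta'=\prod_{\supp(\alpha)\subset I}(\alpha)^{\nu_\alpha}$. Carrying out the same computation as in Theorem \ref{thminj}, but now keeping the strictly lower-degree factors of $\beta'$ rather than assuming they vanish, Lemma \ref{lemsondage} yields an expansion of the form
\[
\gamma_I(\beta)(m)=(m)-\sum_{\supp(\alpha)=I}\nu_\alpha\cdot(\alpha)+R_I,
\]
where $R_I$ is a combination of commutators every coefficient of which is determined entirely by the $\nu_{\alpha'}$ with $\degg(\alpha')<|I|-1$.

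I would then conclude by strong induction on $\degg(\alpha)$. Given two normal forms of the same $\beta$, they produce the same element $\gamma(\beta)$, hence the same vector $\gamma_I(\beta)(m)$ for every admissible $I$. For the base case $|I|=2$ one reads $\nu_{im}$ off as minus the coefficient of $(im)$ in $\gamma_{\{i,m\}}(\beta)(m)$, forcing $\nu_{im}=\nu'_{im}$. Assuming all coefficients of degree $<k$ already coincide, the correction terms $R_I$ for $|I|=k+1$ are identical for the two forms, so comparing the coefficient of $(\alpha)$ with $\supp(\alpha)=I$ on both sides of the displayed identity gives $\nu_\alpha=\nu'_\alpha$ for every $\alpha$ of degree $k$; ranging over all $I$ of length $k+1$ completes the induction. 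The one delicate point is precisely the bookkeeping hidden in $R_I$: one must check that its contribution to commutators supported on $I$ (equivalently, of weight $|I|$) depends only on lower-degree coefficients. This is exactly what Proposition \ref{propgammamatrix} guarantees, since pure comb-claspers act as the identity plus strictly higher-weight corrections, so that $R_I$ cannot involve the unknowns $\nu_\alpha$ with $\supp(\alpha)=I$ and the induction closes cleanly.
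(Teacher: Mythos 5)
Your strategy is the same as the paper's (restrict via $\gamma_I$, induct on the degree, read coefficients off with Lemma \ref{lemsondage}), and your opening observation that uniqueness does not follow formally from Theorem \ref{thminj} alone is exactly right. The gap is in your final step, where the key claim that $R_I$ only involves the coefficients $\nu_{\alpha'}$ with $\degg(\alpha')<|I|-1$ is justified by appealing to Proposition \ref{propgammamatrix}. That proposition only says that a pure braid acts as the identity plus corrections of \emph{strictly higher} weight; it does not prevent a factor $(\alpha)^{\nu_\alpha}$ with $\supp(\alpha)=I$ from acting nontrivially on the intermediate-weight commutators (weights $2,\dots,|I|-1$) that the lower-degree factors have already produced out of $(m)$. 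Such an action would create cross-terms in $R_I$ of the form (function of lower-degree $\nu$'s) times (function of $\nu_\alpha$ itself), landing precisely on the weight-$|I|$ commutators you are trying to solve for; with such terms present, the identity
\[
-\sum_{\supp(\alpha)=I}\nu_\alpha\cdot(\alpha)+R_I(\nu)\;=\;-\sum_{\supp(\alpha)=I}\nu'_\alpha\cdot(\alpha)+R_I(\nu')
\]
no longer isolates $\nu_\alpha-\nu'_\alpha$ even after invoking the induction hypothesis, so the induction does not close as written.

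The claim is true, but it needs a finer input than Proposition \ref{propgammamatrix}. One way: when a comb-clasper $(\alpha)$ with $\supp(\alpha)=I$ interacts (via moves (2) and (4) of Corollary \ref{lemclaspcalculus}) with a clasper whose support also lies in $I$, the created claspers have support contained in $I$ but degree strictly larger than $|I|-1$, hence they have repeats and vanish by Lemma \ref{lemfeuilledouble}; consequently, inside $\mathcal V_I$ a support-$I$ factor acts as the identity on every commutator of weight at least $2$, which is exactly the statement that kills the cross-terms. The paper exploits this same fact multiplicatively rather than coefficient-wise: support-$I$ comb-claspers commute, up to claspers with repeats, with every other factor of $\beta'$, so $\gamma_I(\beta')$ factors as $\gamma_I$ of the lower-degree part composed with $\gamma_I$ of the support-$I$ part; the lower-degree parts of the two normal forms are equal invertible operators by the induction hypothesis and can be cancelled, and Lemma \ref{lemsondage} computes what remains. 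Either of these arguments repairs your proof; Proposition \ref{propgammamatrix} by itself does not.
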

\begin{proof}
The proof follows closely the previous one. As before for a given $I\subset\{1,\ \ldots,\ n\}$ we have $\gamma_I(\beta)=\gamma_I(\beta')$ for $\beta'$ defined by :
$$ \beta'=\prod_{\supp(\alpha)\subset I}(\alpha)^{\nu_{\alpha}}=\prod_{\supp(\alpha)\subset I}(\alpha)^{\nu'_{\alpha}}.$$
We show again by strong induction on the degree that $\nu_{\alpha}=\nu'_{\alpha}$. The base case is strictly similar, but for the inductive step one cannot in general write $\beta'$ with only comb-claspers with support $I$. However by Proposition \ref{lemclaspcalculus} a comb-clasper $(\alpha)$ with $\supp(\alpha)= I$ commutes with any comb-clasper $(\alpha')$ up to comb-claspers with support not included in $I$. Hence $\gamma_I(\mathbf1^{(\alpha)})$ commutes with $\gamma_I(\mathbf1^{(\alpha')})$ for any two comb-claspers $(\alpha')$ and $(\alpha)$ such that $\supp(\alpha)= I$. In particular we get:
\begin{align*}
    \gamma_I(\beta')(m)=&\gamma_I\left(\prod_{\supp(\alpha)\subsetneq I}(\alpha)^{\nu_\alpha}\right)\circ\gamma_I\left(\prod_{\supp(\alpha)= I}(\alpha)^{\nu_\alpha}\right)(m)\\
    =&\gamma_I\left(\prod_{\supp(\alpha)\subsetneq I}(\alpha)^{\nu'_\alpha}\right)\circ\gamma_I\left(\prod_{\supp(\alpha)= I}(\alpha)^{\nu'_\alpha}\right)(m).
\end{align*}
Since comb-claspers $(\alpha)$ with $\supp(\alpha)\subsetneq I$ have degree $<k-1$ where $k$ is the length of $I$, by induction hypothesis we can simplify the first factor in each expression. By Lemma \ref{lemsondage} we compute the second term thus obtaining:
$$(m)-\sum_{\supp(\alpha)=I}\nu_\alpha\cdot(\alpha)\\
    =(m)-\sum_{\supp(\alpha)=I}\nu'_\alpha\cdot(\alpha),$$
and the proof is complete.
\end{proof}
\begin{rmq}\label{ccadeaucamfaitplaisir}
Corollary \ref{normalformunicity} shows that the numbers $\nu_{\alpha}$ of parallel copies of each comb-clasper in a normal form are a complete invariant of pure braids up to link-homotopy. We call those numbers the \emph{clasp-numbers}. Other well known complete homotopy braid invariants are the Milnor numbers \cite{HabeggerLinHomotopy}. As a matter of fact, Milnor numbers can be used, using the techniques of \cite{YasuharaAkiraSelfdelteq}, to give another proof of Corollary \ref{normalformunicity}. In this paper we will not try to make explicit the relation between clasp-numbers and Milnor numbers, since we work solely with clasp-numbers. 
\end{rmq} 

\section{Links up to link-homotopy}\label{sectionlinks}

In the following of the paper we will focus on the study of $links$ up to link-homotopy. More precisely we will describe in terms of \emph{clasp-numbers variation} when two normal forms have link-homotopic $closures$.

The main purpose of this section is to use clasp-numbers, defined in Remark \ref{ccadeaucamfaitplaisir} above, to provide an explicit classification of links up to link-homotopy. In this way we recover results of Milnor \cite{MilnorLinkgrp} and Levine \cite{Levine4comp} for 4 or less components, and extend them partially for 5 components. To do so we first revisit in terms of claspers the work of Habegger and Lin \cite{HabeggerLinHomotopy}.
\begin{rmq}
Kotorii and Mizusawa also considered in \cite{kotorii_link-homotopy_2020} the question of using clasper theory to classify  4-component links up to link-homotopy. They use a different kind of normal form, arranged along a tetrahedron shape, adapted to the 4-component case. The main difference with the present work, however, is that their result makes direct use of Levine's classification. Here we instead reprove the latter using Theorem \ref{thmHabLin} and
clasper calculus. Our approach is likely to extend to the general case: as an illustration of this fact, we treat the algebraically split 5-component case at the end of this section.
\end{rmq}

\subsection{Habegger--Lin's work revisited}

There is a procedure on braids called \emph{closure}, that turns a braid into a link in $S^3$. The question is to determine when two braids have link-homotopic closures. Let us  first recall from \cite[Theorem 1.7 $\&$ Corollary 1.11]{HabeggerLinHomotopy} that for any integer $n$ we have the decomposition: $$\tilde P_n=\tilde P_{n-1}\ltimes\mathcal RF_{n-1}$$ where the first term corresponds to the braid obtained by omitting a given component, and the second term is the class of this component as an element of the reduced fundamental group of the disk with $n-1$ punctures.

To answer the question, Habegger and Lin in \cite{HabeggerLinHomotopy} study an action of $\tilde P_{2n}$ on $\tilde P_{n-1}\ltimes\mathcal RF_{n-1}$, which leads them to considering certain elementary operations $(\bar{x_i},\bar{x_i})_k$, $({x_i},{x_i})_k$ and $(\bar{x_i},{x_i})_k$, whose definition we recall here in terms of claspers.
\begin{defn}\label{defoperationelmt}
Let $\beta\in\tilde P_n$ be a pure homotopy braid, and let $i,\ k$ be two \emph{distinct} integers in $\{1,\ \ldots,\ n\}$.
\begin{itemize}
    \item $(\bar{x_i},\bar{x_i})_k(\beta)$ is the pure homotopy braid $\beta^\Delta\cdot\mathbf1^{(ik)^{-1}}$, where $\Delta$ and $(ik)^{-1}$ are degree one claspers as shown in the left-hand side of Figure \ref{figoperationclasp}.
    \item $({x_i},{x_i})_k(\beta)$ is the pure homotopy braid $\mathbf1^{(ik)}\cdot\beta^{\Delta'}$, where $\Delta'$ and $(ik)^{-1}$ are degree one claspers as shown in the central part of Figure \ref{figoperationclasp}.
    \item $(\bar{x_i},{x_i})_k(\beta)$ is the pure homotopy braid $1^{(ik)}\beta\cdot\mathbf1^{(ik)^{-1}}$, where $(ik)$ and $(ik)^{-1}$ are degree one claspers as shown in the right-hand side of Figure \ref{figoperationclasp}.
\end{itemize}
\begin{figure}[!htbp]
    \centering
    \includegraphics[width=\linewidth]{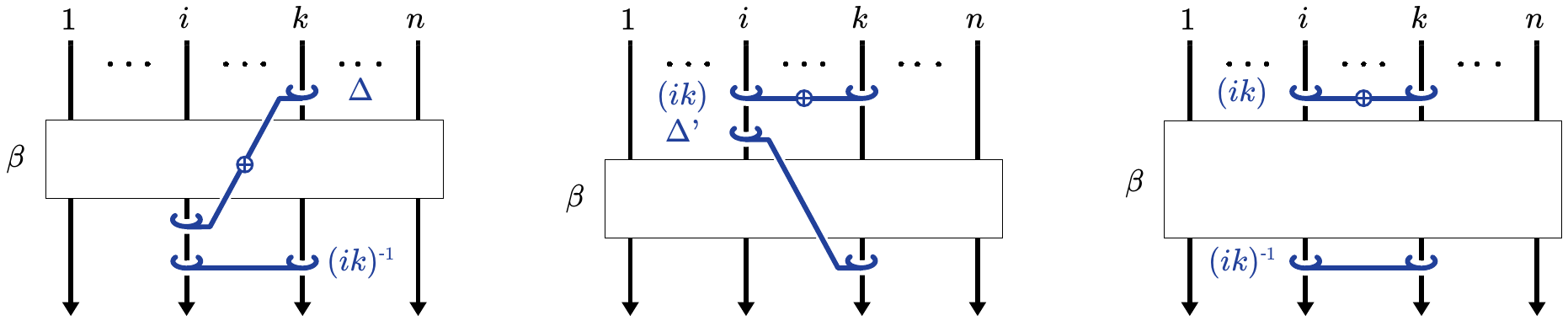}
    \caption{The elementary operations $(\bar{x_i},\bar{x_i})_k$, $(x_i,x_i)_k$, and $(\bar{x_i},x_i)_k$.}
    \label{figoperationclasp}
\end{figure}
\end{defn}
\begin{rmq}
In fact, in \cite{HabeggerLinHomotopy} those operations are only defined for $k=n$, but the definitions extend naturally for any $k\neq i$. Moreover, Figure 2.8 in \cite{HabeggerLinHomotopy} does not correspond exactly to Figure \ref{figoperationclasp}, due to convention choices. Firstly, in \cite{HabeggerLinHomotopy} braids are oriented from bottom to top whereas we orient them from top to bottom. Secondly, here the basepoint of the second term in the decomposition $\tilde P_n=\tilde P_{n-1}\ltimes\mathcal RF_{n-1}$ is taken above the $n-1$ punctures, and not under the $n-1$ punctures as in \cite{HabeggerLinHomotopy}.
\end{rmq}
We state now the main classification theorem of links up to link-homotopy.
\begin{thm}\label{thmHabLin}\cite{HabeggerLinHomotopy,Hughespartial} Let $\beta,\ \beta'\in\tilde{P}_n$ be two pure homotopy braids. The closures of $\beta$ and $\beta'$ are link-homotopic, if and only if there exists a sequence $\beta=\beta_0,\ \beta_1,\ \ldots,\ \beta_n=\beta'$ of elements of $\tilde P_n$ such that $\beta_{j+1}=(\bar{x_i},\bar{x_i})_k(\beta_j)$ for some $i\neq k$ in $\{1,\ \ldots,\ n\}$.
\end{thm}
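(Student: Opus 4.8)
The plan is to prove the two implications separately. The forward (``if'') direction is a direct consequence of the clasper calculus developed above, whereas the converse is the substantive Markov-type statement, for which I would follow the braiding strategy of Habegger and Lin \cite{HabeggerLinHomotopy}.

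For the ``if'' direction I would check that neither a conjugation nor a partial conjugation alters the link-homotopy class of the closure. For an ordinary conjugation $\beta\mapsto\gamma\beta\gamma^{-1}$ this is the classical invariance of the braid closure under conjugation (the Markov conjugation move), so the closures are in fact isotopic. For a partial conjugation I would use the clasper description of Proposition \ref{propconjpartiel}, according to which the $i$-th partial conjugation by $x_j$ replaces $\beta$ by the surgery result $\beta^{\Delta}\cdot\mathbf1^{(ij)^{-1}}$. Upon closing the braid, the $i$-th strand becomes a circle, and I would slide the clasper $\Delta$ once around this circle until it becomes a parallel copy of $(ij)^{-1}$ differing only by a twist; move $(1)$ of Corollary \ref{lemclaspcalculus} then cancels the pair, so that the closures of $\beta$ and of its partial conjugate are link-homotopic.

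For the ``only if'' direction I would braid a given link-homotopy. First, make the link-homotopy $L_t$ between the closures generic, so that it splits into a finite alternation of ambient isotopies and isolated self-crossing changes. Since a self-crossing change is realized by a clasper with both leaves on a single component, it has repeats and is trivial up to link-homotopy by Lemma \ref{lemfeuilledouble}; hence such changes may be performed for free and only the isotopy parts matter. To each isotopy part I would apply Markov's theorem: an ambient isotopy carrying one closed braid onto another is realized by braid isotopies together with conjugations and stabilizations. Conjugations are already permitted, so the crux is to account for the stabilizations.

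The main obstacle is precisely to dispense with the stabilizations while remaining among $n$-component pure homotopy braids, the extra freedom being absorbed by the partial conjugations. Concretely, using the decomposition $\tilde P_n=\tilde P_{n-1}\ltimes\mathcal RF(n-1)$ iterated as in the preceding discussion, the indeterminacy in recovering a homotopy string link from its closure is exactly the conjugacy of the last factor $\omega\in\mathcal RF(n-1)$ by elements $\lambda\in\mathcal RF(n-1)$, which is by definition a partial conjugation. Controlling the strand number throughout this braiding argument is the technical heart of the proof, and is where I would rely most heavily on the analysis of \cite{HabeggerLinHomotopy}.
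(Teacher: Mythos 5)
You should first be aware that the paper does not prove this statement at all: it is imported verbatim as \cite[Theorem 2.13]{HabeggerLinHomotopy}, and everything the paper does afterwards (Proposition \ref{conjpartialimplyconj}, Corollary \ref{coroHabLin}, the computations of Section \ref{sectionlinks}) is built \emph{on top of} it. So your proposal has to stand on its own, and it does not. Concerning the ``if'' direction, the idea of sliding $\Delta$ around the closed $i$-th component is the right one, but the one-line claim that it ``becomes a parallel copy of $(ij)^{-1}$ differing only by a twist'' hides the entire content of the step. Sliding a leaf along the closure arc and then straightening the edge of $\Delta$ against that of $(ij)^{-1}$ is \emph{not} a free move: every crossing of the edge of $\Delta$ with another component or with another clasper is governed by moves (2)--(4) of Corollary \ref{lemclaspcalculus} and produces a correction clasper, which vanishes by Lemma \ref{lemfeuilledouble} only if it has repeats. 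That all corrections do have repeats here is true, but it uses precisely the structure exhibited in Proposition \ref{propconjpartiel}: the claspers encoding $\theta$ are disjoint from, and pass under, the $i$-th component, while every clasper encoding $\omega$ has $i$ in its support; consequently the loop swept out by the edge of $\Delta$ is a push-off of the unknotted, unlinked underlying $i$-th circle, whose spanning disk meets only claspers with $i$ in their support. Without this justification the argument is exactly the kind of unjustified around-the-closure slide that the paper is careful to regulate (compare Lemma \ref{lembypass}, which imposes explicit hypotheses before permitting such a slide).

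The genuine failure, however, is the ``only if'' direction, which is the substance of the theorem. Your treatment of self-crossing changes is not where the difficulty lies (once such a crossing is isotoped into the braid box it does not change the homotopy braid, by Lemma \ref{lemfeuilledouble}); the difficulty is the isotopy step, and it cannot be outsourced. Markov's theorem unavoidably introduces stabilizations, a stabilized braid is no longer an $n$-strand pure braid with one strand per component, and the assertion that this indeterminacy ``is absorbed by the partial conjugations'' is not a lemma one can invoke: showing that stabilization-type indeterminacy can be traded for conjugation and partial conjugation up to link-homotopy \emph{is} Habegger and Lin's theorem. Likewise, your final observation that ``the indeterminacy in recovering a homotopy string link from its closure is exactly the conjugacy of the factor $\omega\in\mathcal RF(n-1)$, which is by definition a partial conjugation'' is a restatement of the conclusion, not an argument for it. Since you explicitly plan to ``rely most heavily on the analysis of \cite{HabeggerLinHomotopy}'' at exactly this point, the proposal amounts to a citation of the result it sets out to prove, which is consistent with how the paper itself treats the statement, but it is not a proof.
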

\begin{proof}
Firstly, \cite[Lemma 2.11]{HabeggerLinHomotopy} and the proof of \cite[Theorem 2.13]{HabeggerLinHomotopy} imply that two pure homotopy braids whose closures are link-homotopic are related by a sequence of operations $(\bar{x_i},\bar{x_i})_k$, $({x_i},{x_i})_k$ and $(\bar{x_i},{x_i})_k$. Moreover, computations in \cite[pp.~413]{HabeggerLinHomotopy} show that operations $(\bar{x_i},\bar{x_i})_k$ generate operations $({x_i},{x_i})_k$. Finally, Hughes in \cite{Hughespartial} showed that operations $(\bar{x_i},{x_i})_k$ are also realized by operations $(\bar{x_i},\bar{x_i})_k$.
\end{proof}

\subsection{Link-homotopy classification of links with a small number of components.}

This section is dedicated to the explicit classification of links up to link-homotopy. The starting point of the strategy is Theorem \ref{thmHabLin} which allows us to see links up to link-homotopy as pure homotopy braids up to operations $(\bar{x_i},\bar{x_i})_k$ with $i\neq k$ in $\{1,\ \ldots,\ n\}$. Moreover with Corollary \ref{normalformunicity} we show that a braid is uniquely determined by its normal form, encoded by a sequence of integers: the clasp-numbers. The goal is then to determine how the normal form, or equivalently the clasp-numbers, vary under operations $(\bar{x_i},\bar{x_i})_k$. By using clasper calculus, we recover in this way the link-homotopy classification results from Milnor \cite{MilnorLinkgrp} and Levine \cite{Levine4comp} in the case of links with at most 4 components. We then apply these techniques to the 5-component \emph{algebraically split} case.

In order to use Corollary \ref{normalformunicity}, we need to fix an order on the set of twisted comb-claspers. In the rest of the paper we fix the following order, which is inspired from Example \ref{exnormalformcommu}. For two twisted comb-claspers $(\alpha)=(i_1\cdots i_l)$ and $(\alpha')=(i'_1\cdots i'_{l'})$ we set $(\alpha)\leq(\alpha')$ if:
\begin{itemize}
    \item $\degg(\alpha)<\degg(\alpha'),$ or
    \item $\degg(\alpha)=\degg(\alpha')$ and $i_1\ldots i_l<_{\lex}i'_1\ldots i'_l.$
\end{itemize} 
This order is used implicitly throughout the rest of the paper.

\subsubsection{The 3-component case.}

Let $L$ be a $3$-component link, then $L$ can be seen as the closure of a $3$-component string link $\beta$. As mentioned in Remark \ref{rmqstringlink}, up to link-homotopy, string links correspond to pure braids. Thus $\beta$ can be seen as the closure of the normal form: 
\[(12)^{\nu_{12}}(13)^{\nu_{13}}(23)^{\nu_{23}}(123)^{\nu_{123}},\]
for some integers $\nu_{12}$,\ $\nu_{13},\ \nu_{23}$ and $\nu_{123}$. See the left-hand side of Figure \ref{formenormal(123)}.
\begin{figure}[!htbp]
    \centering
    \includegraphics[width=\linewidth]{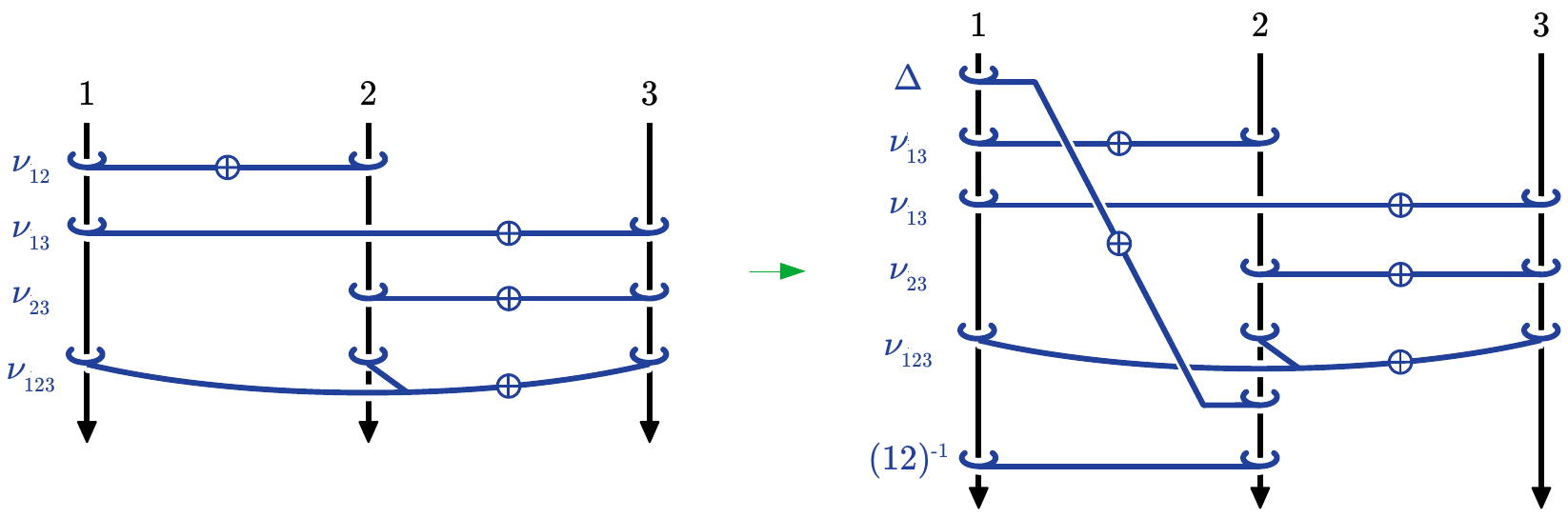}
    \caption{ Operation $(\bar{x_2},\bar{x_2})_1$ on the 3-component normal form.}
    \label{formenormal(123)}
\end{figure}

We now investigate how these numbers vary under operations $(\bar{x_i},\bar{x_i})_k$ for $i\neq k\in\{1,\ 2,\ 3\}$; we apply for example $(\bar{x_2},\bar{x_2})_1$. By Definition \ref{defoperationelmt} this corresponds to introducing the claspers $\Delta$ and $(12)^{-1}$ as shown in the right-hand side of Figure \ref{formenormal(123)}, which we then put in normal form. This is done by sliding the $1$-leaf of $\Delta$ along the first component to obtain $(12)$ and simplify it with $(12)^{-1}$. By move $(2)$ from Proposition \ref{lemclaspcalculus}, this sliding creates new claspers, but by Lemma \ref{lemfeuilledouble}, the only claspers that do not vanish up to link-homotopy, are those created when $\Delta$ crosses the leaves of $(13)^{\nu_{13}}$: more precisely, in this process, $\nu_{13}$ copies of $\{1,\ 2,\ 3\}$-supported claspers appear. Finally, according to Remark \ref{rmqclaspcalculus} we can rearrange these new claspers and the normal form becomes $$(12)^{\nu_{12}}(13)^{\nu_{13}}(23)^{\nu_{23}}(123)^{\nu_{123}+\nu_{13}}.$$
The other operations $(\bar{x_i},\bar{x_i})_k$ act in a similar way, by changing $\nu_{123}$ by a multiple of $\nu_{12}$, $\nu_{13}$ or $\nu_{23}$. 
Summarizing we have shown that $$\nu_{12},\ \nu_{13},\ \nu_{23} \text{ and } \nu_{123} \text{ mod} \text{ gcd}(\nu_{12},\ \nu_{13},\ \nu_{23}),$$ form a set of complete invariants for 3-component links up to link-homotopy. 

Note that we recover here Milnor invariants $\overline\mu_{12}$, $\overline\mu_{13}$, $\overline\mu_{23}$ and $\overline\mu_{123}$, that we already knew to be complete link-homotopy invariants for $3$-component links (see \cite{MilnorLinkgrp}). 

\subsubsection{The 4-component case.} 

Before proceeding with the link-homotopy classification of 4-component links, we need the following technical result. 
\begin{lem}\label{lembypass}
Let $C$ be a union of simple claspers for the trivial $n$-component braid $\mathbf1$, and let $l\in\{1,\ \ldots,\ n\}$. Let $T$ be a clasper in $C$ with $l$ in its support and let $C_T=\bigcup T'$ be the union of all claspers in $C$ such that $\supp(T')\cap\supp(T)=\{l\}$. Suppose that an $l$-leaf $ f$ of $T$ is disjoint from a $3$-ball $B$ containing all $l$-leaves of $C_T$. Then the closure of $\mathbf1^C$ is link-homotopic to the closure of $\mathbf 1^{C'}$ where $C'$ is obtained from $C$ by passing $f$ across the ball $B$ as shown in Figure \ref{figbypass}.
\end{lem}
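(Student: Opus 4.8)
The plan is to realize the passage of $f$ across $B$ as a slide of $f$ along the $l$-th component, carried out \emph{after} closure, so that this component becomes a circle $K_l$; the whole point of passing to the closure is that $K_l$ is now closed, which lets us move $f$ from one side of $B$ to the other by travelling \emph{around} $K_l$ instead of \emph{through} $B$. First I would push the edge of $T$ incident to $f$ into a tubular neighborhood of $K_l$, so that it runs parallel to $K_l$. Since $K_l$ is disjoint from the other components of the closure, this edge then stays disjoint from every strand outside $\supp(T)$, and the slide of $f$ with its trailing edge along $K_l$ is an honest isotopy as far as the braid components are concerned: it is never forced to cross a strand.

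Next I would route the slide so that $f$ travels the arc of $K_l$ that avoids $B$ (through the closure arc), ending on the far side of $B$; since only $f$ has moved and it ends in the position prescribed by passing across $B$, this is exactly the configuration $C'$. Along the way the trailing edge of $T$ meets the edges of the other claspers sitting near $K_l$, while $f$ itself meets their $l$-leaves. The edge crossings are harmless: every clasper meeting $K_l$ shares the component $l$ with $T$, so by the first item of Remark \ref{rmqclaspcalculus} the crossing changes between its edges and those of $T$ only produce claspers with repeats, which vanish by Lemma \ref{lemfeuilledouble} (while self-crossings of $T$ are free by move $(5)$ of Corollary \ref{lemclaspcalculus}). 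The leaf crossings are where the ball $B$ enters: exchanging $f$ with an $l$-leaf via move $(2)$ is free only when the two claspers share a component besides $l$. By definition $B$ contains every $l$-leaf of a clasper $T'$ with $\supp(T')\cap\supp(T)=\{l\}$, so each $l$-leaf met outside $B$ belongs to a clasper $S$ with $l\in\supp(S)$ whose support meets $\supp(T)$ in a second component; the auxiliary clasper $\tilde T$ produced by move $(2)$ then has $\supp(\tilde T)=\supp(T)\cup\supp(S)$ with that component repeated, and hence vanishes. Thus the whole slide introduces nothing that survives up to link-homotopy, and the closures of $\mathbf1^C$ and $\mathbf1^{C'}$ coincide.

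The hard part will be the routing and the accompanying geometric bookkeeping rather than any algebra: one must verify that the chosen arc of $K_l$ genuinely avoids $B$ (so that no $l$-leaf of $C_T$ is ever crossed, such a crossing being precisely the one that would create a clasper \emph{without} repeats, and thus an obstruction), and that the trailing edge can be kept in the tubular neighborhood of $K_l$ throughout, meeting only leaves and edges of claspers that already share the component $l$ with $T$. Once this is arranged, the essential content is the short support computation in the leaf-exchange move showing that $\tilde T$ inherits a repeated component from $S$, which is what collapses the entire slide to a link-homotopy.
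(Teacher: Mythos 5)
Your proposal is correct and takes essentially the same route as the paper's proof: after closure you slide $f$ around the now-closed $l$-th component in the direction avoiding $B$, crossing edges freely and exchanging $f$ with the $l$-leaves of claspers that share a second component with $T$ (so that the auxiliary claspers produced by moves $(2)$ and $(4)$ of Corollary \ref{lemclaspcalculus} have repeats and vanish by Lemma \ref{lemfeuilledouble}), which is exactly the paper's argument via Remark \ref{rmqclaspcalculus}. The only point the paper adds, and which your slide tacitly assumes away, is the degenerate case where $T$ itself has several $l$-leaves (move $(2)$ does not apply to two leaves of the same clasper, so the slide could get stuck on them); there $T$ has repeats and vanishes by Lemma \ref{lemfeuilledouble}, making the statement trivial.
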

\begin{proof}
First the result is clear if $T$ has several $l$-leaves, since by Lemma \ref{lemfeuilledouble}, $T$ vanishes up to link-homotopy. By Remark \ref{rmqclaspcalculus} the edges of any clasper in $C_T$ can freely cross those of $T$ but $f$ and the $l$-leaves of claspers in $C_T$ cannot be freely exchanged. However according to Remark \ref{rmqclaspcalculus} again, the leaf $f$ can be freely exchanged with any $l$-leaf of claspers in $C\setminus C_T$, since their supports contain at least 
some $k\neq l$ which is in $\supp(T)$. 
By using the closure we can thus slide $f$ in the other direction, using the closure of $\mathbf1$,  and bypass the $l$-leaves of claspers in $C_T$ all gathered in $B$. 
\end{proof}

\begin{figure}[!htbp]
    \centering
    \includegraphics[scale=0.8]{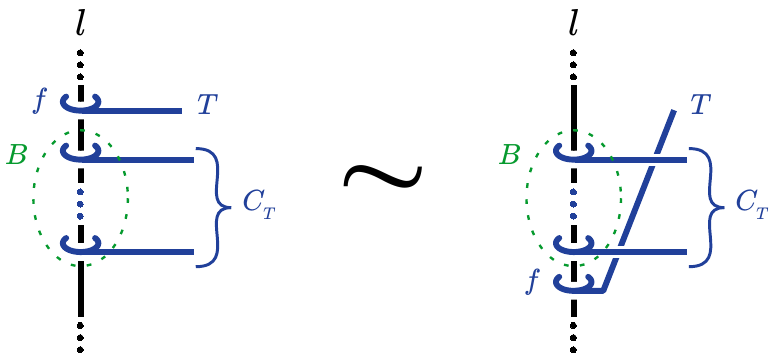}
    \caption{Illustration of Lemma \ref{lembypass}}
    \label{figbypass}
\end{figure}

Although the assumption of Lemma \ref{lembypass} may seem restrictive, it turns out to be naturally satisfied for normal forms. For instance, we have the following consequence. 
\begin{prop}\label{propbypass}
Let $C=(\alpha_1)^{\nu_1}\cdots(\alpha_m)^{\nu_m}$ be the normal form of a pure homotopy $n$-component braid and let $(\alpha)$ be a degree $n-2$ comb-clasper. Then $C$ and $C'=(\alpha_1)^{\nu_1}\cdots (\alpha)(\alpha_i)^{\nu_i}(\alpha)^{-1}\cdots(\alpha_m)^{\nu_m}$ have link-homotopic closures, for any $i\in\{1,\ \ldots,\ m\}$.
\end{prop}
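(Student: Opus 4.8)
The plan is to reduce the statement to a single application of the bypass trick (Lemma \ref{lembypass}), after determining which claspers can genuinely obstruct the free motion of $(\alpha)$. Write $C'=A\,(\alpha)(\alpha_i)^{\nu_i}(\alpha)^{-1}\,B$, where $A$ and $B$ collect the factors of the normal form lying respectively before and after the block $(\alpha_i)^{\nu_i}$. Since the only factor separating the two inserted copies $(\alpha)$ and $(\alpha)^{-1}$ is the block $(\alpha_i)^{\nu_i}$, it suffices to slide $(\alpha)$ past $(\alpha_i)^{\nu_i}$ until it meets $(\alpha)^{-1}$ and cancels with it by move (1) of Corollary \ref{lemclaspcalculus}; this gives that the closure of $C'$ is link-homotopic to that of $C$.

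First I would pin down the obstruction. As $\degg(\alpha)=n-2$, the support of $(\alpha)$ misses exactly one component, say $c$, so that $\supp(\alpha)=\{1,\dots,n\}\setminus\{c\}$. For any comb-clasper $(\alpha_j)$ one has $\supp(\alpha_j)\cap\supp(\alpha)=\supp(\alpha_j)\setminus\{c\}$, which contains at least two indices unless $(\alpha_j)$ is a degree-one clasper whose support is $\{c,l\}$ for some $l\in\supp(\alpha)$. By Remark \ref{rmqclaspcalculus}, whenever $|\supp(\alpha_i)\cap\supp(\alpha)|\ge 2$ the clasper $(\alpha)$ commutes freely past $(\alpha_i)^{\nu_i}$: every leaf exchange or edge crossing change only produces claspers with repeats, which vanish by Lemma \ref{lemfeuilledouble}. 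In this case $C'\sim C$ already as braids, and no use of the closure is needed.

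The remaining, and genuinely interesting, case is when $(\alpha_i)$ is the degree-one clasper of support $\{c,l\}$, for then the $l$-leaves of $(\alpha)$ and of $(\alpha_i)$ cannot be exchanged freely. The key point is that this obstruction is tightly controlled. I would apply Lemma \ref{lembypass} with $T=(\alpha)$ and the component $l$: the union $C_T$ of claspers meeting $\supp(\alpha)$ exactly in $\{l\}$ consists precisely of the claspers of support $\{c,l\}$, and since $C$ is a normal form, hence stacked, these occur in the single consecutive block $(\alpha_i)^{\nu_i}$. Their $l$-leaves can therefore be gathered in a ball $B$, while the $l$-leaf $f$ of the inserted copy $(\alpha)$, sitting just above this block, lies outside $B$. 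Lemma \ref{lembypass} then lets me pass $f$ across $B$ using the closure. Once $f$ has bypassed the block, $(\alpha)$ no longer shares a leaf with $(\alpha_i)^{\nu_i}$ on their only common component $l$, so it can be slid down past the block, using the crossing changes permitted by Remark \ref{rmqclaspcalculus}, and finally cancelled against $(\alpha)^{-1}$.

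I expect the main obstacle to be the verification that the hypothesis of Lemma \ref{lembypass} actually holds, namely that all obstructing $l$-leaves are confined to one ball disjoint from $f$. This is exactly where both assumptions enter: the degree $n-2$ hypothesis forces the only obstructing claspers to have degree one, pinning their support to $\{c,l\}$, while the normal-form (stacked) hypothesis guarantees that all such claspers sit in a single block rather than being scattered through the word. Once this confinement is established the bypass lemma applies directly and the final cancellation is routine.
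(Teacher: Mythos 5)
Your proof is correct and follows essentially the same route as the paper's: insert/cancel the trivial pair $(\alpha)(\alpha)^{-1}$, exchange freely via Remark \ref{rmqclaspcalculus} whenever $|\supp(\alpha)\cap\supp(\alpha_i)|\geq 2$, and invoke Lemma \ref{lembypass} in the only obstructed case, where the degree $n-2$ hypothesis forces $(\alpha_i)$ to be a degree-one comb-clasper supported on $\{c,l\}$. Your extra verification that the stacked normal form confines all $\{c,l\}$-supported claspers to the single block $(\alpha_i)^{\nu_i}$, so that the hypothesis of Lemma \ref{lembypass} holds, is a detail the paper leaves implicit.
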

\begin{proof}
We first consider the product $(\alpha_1)^{\nu_1}\cdots (\alpha_i)^{\nu_i}(\alpha)(\alpha)^{-1}\cdots(\alpha_m)^{\nu_m}$ where we just insert the trivial term $(\alpha)(\alpha)^{-1}$ in $C$. We next want to exchange $(\alpha)$ and $(\alpha_i)^{\nu_i}$. This is allowed if $|\supp(\alpha)\cap\supp(\alpha_i)|\geq2$ by Remark \ref{rmqclaspcalculus}, but if $\supp(\alpha)\cap\supp(\alpha_i)=\{l\}$ we can only realize crossing changes between the edges of $(\alpha)$ and $(\alpha_i)^{\nu_i}$ (see Remark \ref{rmqclaspcalculus}). However in that case $(\alpha_i)$ is a comb-clasper of support $\{k,\ l\}$ with $k$ the only component not in the support of $(\alpha)$, thus we can apply Lemma \ref{lembypass} to the $l$-leaf of $(\alpha)$, and bypass the block $(\alpha_i)^{\nu_i}$ (corresponding to $C_T$ in Lemma \ref{lembypass}).
\end{proof}

\begin{figure}[!htbp]
    \centering    
    \includegraphics[scale=0.6]{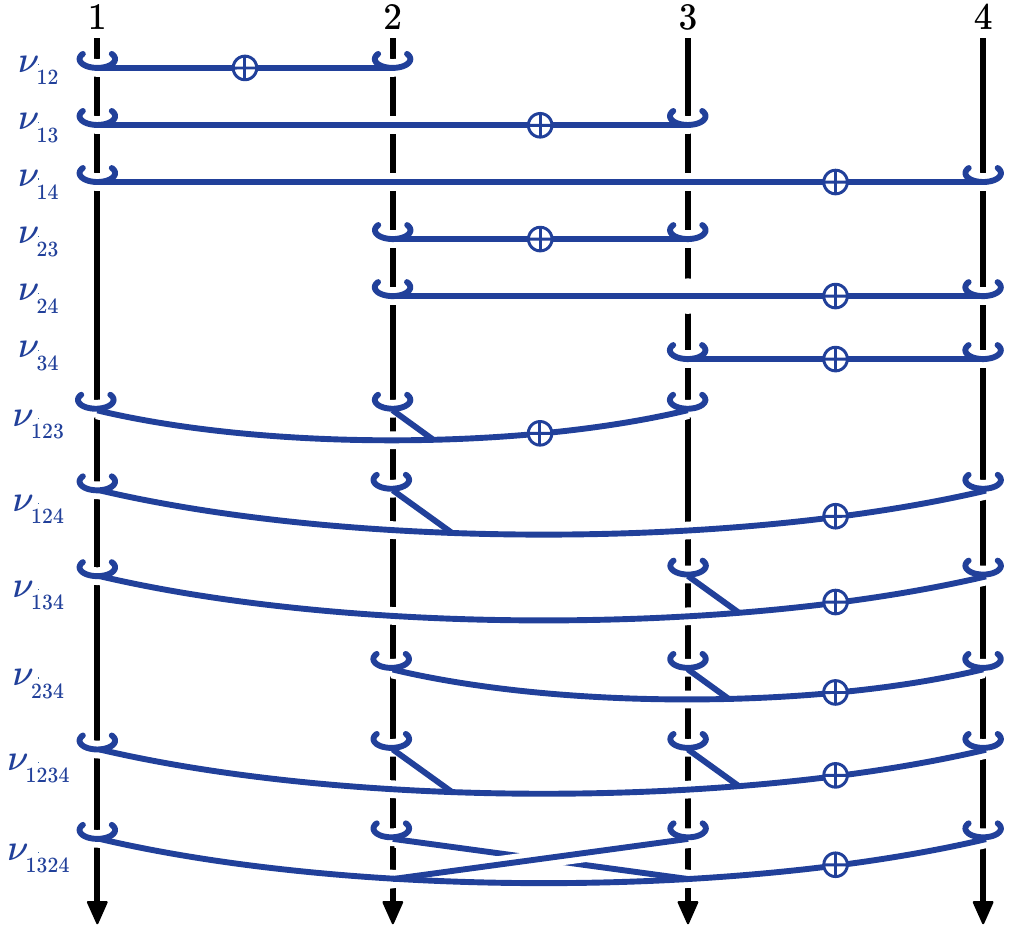}
    \caption{Normal form for 4 components.}
    \label{formenormal(1234)}
\end{figure}
Let us now return to the classification of links up to link-homotopy and let $L$ be a $4$-component link seen as the closure of the normal form:  
$$(12)^{\nu_{12}}(13)^{\nu_{13}}(14)^{\nu_{14}}(23)^{\nu_{23}}(24)^{\nu_{24}}(34)^{\nu_{34}}(123)^{\nu_{123}}(124)^{\nu_{124}}(134)^{\nu_{134}}(234)^{\nu_{234}}(1234)^{\nu_{1234}}(1324)^{\nu_{1324}},$$
for some integers $\nu_{12}$, $\nu_{13},$ $\nu_{14},$ $\nu_{23},$ $\nu_{24},$ $\nu_{34},$ $\nu_{123},$ $\nu_{124},$ $\nu_{134},$ $\nu_{234},$ $\nu_{1234},$ and $\nu_{1324}$.
See Figure \ref{formenormal(1234)}.

We can apply Proposition \ref{propbypass} to the degree $2$ comb-claspers $(123)$, $(124)$, $(134)$ and $(234)$. For example, applying Proposition \ref{propbypass} to $(\alpha)=(234)$ and $(\alpha_i)=(12)$, we get that $L$ is link-homotopic to the closure of:
\begin{align*}
   &{\color{red}(234)}(12)^{\nu_{12}}{\color{red}(234)^{-1}}(13)^{\nu_{13}}(14)^{\nu_{14}}(23)^{\nu_{23}}(24)^{\nu_{24}}(34)^{\nu_{34}}(123)^{\nu_{123}}\\&(124)^{\nu_{124}}(134)^{\nu_{134}}(234)^{\nu_{234}}(1234)^{\nu_{1234}}(1324)^{\nu_{1324}}.
\end{align*}
By clasper calculus (Proposition \ref{lemclaspcalculus} and Remark \ref{rmqclaspcalculus}), we have $(234)(12)^{\nu_{12}}(234)^{-1}\sim(12)^{\nu_{12}}(1234)^{\nu_{12}}$. The product of claspers $(1234)^{\nu_{12}}$ can be freely homotoped by Remark \ref{rmqclaspcalculus}, thus producing the normal form 
$$(12)^{\nu_{12}}(13)^{\nu_{13}}(14)^{\nu_{14}}(23)^{\nu_{23}}(24)^{\nu_{24}}(34)^{\nu_{34}}(123)^{\nu_{123}}(124)^{\nu_{124}}(134)^{\nu_{134}}(234)^{\nu_{234}}(1234)^{\nu_{1234}{\color{red}+\nu_{12}}}(1324)^{\nu_{1324}},$$
whose closure is link-homotopic to $L$.
This is recorded in the first row of Table \ref{tableau1}, which records all possible transformations on clasp-numbers obtained with Proposition \ref{propbypass}.  Each row represents a possible transformation where the entry in the column $\nu_{\alpha}$ represents the variation of the clasp-number $\nu_{\alpha}$. Note that an empty cell means that the corresponding clasp-number remains unchanged. Note also that, we only need two columns because for the comb-claspers of degree $1$ or $2$ the associated clasp-numbers remain unchanged.
\begin{table}[!htbp]
\centering
\begin{tabular}{|c|c|}
\hline
$\nu_{1234}$&$\nu_{1324}$\\
\hline\hline
$\nu_{12}$&\\
\hline
$\nu_{34}$&\\
\hline
&$\nu_{13}$\\
\hline
&$\nu_{24}$\\
\hline
$\nu_{14}$&-$\nu_{14}$\\
\hline
$\nu_{23}$&-$\nu_{23}$\\
\hline
\end{tabular}
\vspace{0.08cm}
\caption{Some clasp-numbers variation with same closures.}
\label{tableau1}
\end{table}

Let us now describe how operations $(\bar{x_i},\bar{x_i})_k$ for $i\neq k$ in $\{1,\ \ldots,\ 4\}$ affect clasp-numbers. As for the $3$-component case, $(\bar{x_i},\bar{x_i})_k$ corresponds to sliding the $i$-leaf of a simple clasper of support $\{i,\ j\}$ (denoted $\Delta$ in Definition \ref{defoperationelmt}) along the $i$-th component. Along the way $\Delta$ encounters leaves and edges of other claspers, that can be crossed as described by moves $(2)$ and $(4)$ of Proposition \ref{lemclaspcalculus}. In doing so, claspers of degree $2$ and $3$ may appear, that we must reposition in the normal form. Those of degree $3$ commute with any clasper by Remark \ref{rmqclaspcalculus}, but since they may not be comb-claspers we have to use IHX relations (Proposition \ref{lemIHX}) to turn them into comb-claspers. Claspers of degree $2$ can be repositioned using Remark \ref{rmqclaspcalculus} and Lemma \ref{lembypass} (the fact that Lemma \ref{lembypass} applies is clear according to the shape of the normal form, where factors are stacked).

We detail as an example operation $(\bar{x_4},\bar{x_4})_2$. In that case $\Delta$ has support $\{2,\ 4\}$ and we slide its $2$-leaf along the $2$nd component. According to Remark \ref{rmqclaspcalculus}, $\Delta$ can freely cross the edges of claspers with $4$ in their support and the $2$-leaves of claspers containing $2$ and $4$ in their support. Thus we only consider the claspers that appear when $\Delta$ meets the edges of $(13)^{\nu_{13}}$ and the $2$-leaves of $(12)^{\nu_{12}},$ $(23)^{\nu_{23}}$ and  $(123)^{\nu_{123}}$. Once repositioned we obtain in order the factors $(1324)^{\nu_{13}}$, $(124)^{\nu_{12}}$, $(234)^{-\nu_{23}}$ and $(1324)^{-\nu_{123}}$. However according to Table \ref{tableau1}, $(1324)^{\nu_{13}}$ can be removed up to link-homotopy and thus we get the following normal form: 
\begin{align*}
&(12)^{\nu_{12}}(13)^{\nu_{13}}(14)^{\nu_{14}}(23)^{\nu_{23}}(24)^{\nu_{24}}(34)^{\nu_{34}}(123)^{\nu_{123}}(124)^{\nu_{124}{\color{red}+\nu_{12}}}\\&(134)^{\nu_{134}}(234)^{\nu_{234}{\color{red}-\nu_{23}}}(1234)^{\nu_{1234}}(1324)^{\nu_{1324}{\color{red}-\nu_{123}}}.
\end{align*}

In the same way, we compute all operations $(\bar{x_i},\bar{x_i})_k$ and record them in Table \ref{tableau}. The entry in row $(\bar{x_i},\bar{x_i})_k$ represents the corresponding operation. As in Table \ref{tableau1}, an empty cell means that $(\bar{x_i},\bar{x_i})_k$ does not change the clasp-number. Moreover the $\nu_{ik}$ columns are omitted because they remain unchanged by any operations. 

\

\begin{table}[!htbp]
\centering
\begin{tabular}{|c||c|c|c|c|c|c|}
\hline
&$\nu_{123}$&$\nu_{124}$&$\nu_{134}$&$\nu_{234}$&$\nu_{1234}$&$\nu_{1324}$\\
\hline\hline
$(\bar{x_2},\bar{x_2})_1$&$\nu_{13}$&$\nu_{14}$&&&$\nu_{134}$&\\
\hline
$(\bar{x_3},\bar{x_3})_1$&$-\nu_{12}$&&$\nu_{14}$&&&$\nu_{124}$\\
\hline
$(\bar{x_4},\bar{x_4})_1$&&$-\nu_{12}$&$-\nu_{13}$&&$-\nu_{123}$&$\nu_{123}$\\
\hline\hline
$(\bar{x_1},\bar{x_1})_2$&$-\nu_{23}$&$-\nu_{24}$&&&$-\nu_{234}$&\\
\hline
$(\bar{x_3},\bar{x_3})_2$&$\nu_{12}$&&&$\nu_{24}$&$\nu_{124}$&$-\nu_{124}$\\
\hline
$(\bar{x_4},\bar{x_4})_2$&&$\nu_{12}$&&$-\nu_{23}$&&$-\nu_{123}$\\
\hline\hline
$(\bar{x_1},\bar{x_1})_3$&$\nu_{23}$&&$-\nu_{34}$&&&$\nu_{234}$\\
\hline
$(\bar{x_2},\bar{x_2})_3$&$-\nu_{13}$&&&$-\nu_{34}$&$-\nu_{134}$&$\nu_{134}$\\
\hline
$(\bar{x_4},\bar{x_4})_3$&&&$\nu_{13}$&$\nu_{23}$&$\nu_{123}$&\\
\hline\hline
$(\bar{x_1},\bar{x_1})_4$&&$\nu_{24}$&$\nu_{34}$&&$\nu_{234}$&$-\nu_{234}$\\
\hline
$(\bar{x_2},\bar{x_2})_4$&&$-\nu_{14}$&&$\nu_{34}$&&$-\nu_{134}$\\
\hline
$(\bar{x_3},\bar{x_3})_4$&&&$-\nu_{14}$&$-\nu_{24}$&$-\nu_{124}$&\\
\hline
\end{tabular}
\vspace{0.08cm}
\caption{Clasp-numbers variations under operations $(\bar{x_i},\bar{x_i})_k$.}
\label{tableau}
\end{table}

There are however algebraic redundancies in Table \ref{tableau}, i.e. some lines are combinations of other lines, which means that some operation $(\bar{x_i},\bar{x_i})_k$ generate the others. So we can keep only these ones (or their opposite), which we call \emph{“generating"} operations, and which we record in Table \ref{tableau'}.

\begin{table}[!htbp]
\centering
\begin{tabular}{|c|c|c|c|c|c|}
\hline
$\nu_{123}$&$\nu_{124}$&$\nu_{134}$&$\nu_{234}$&$\nu_{1234}$&$\nu_{1324}$\\
\hline
\hline
$\nu_{13}$&$\nu_{14}$&&&$\nu_{134}$&\\
\hline
$-\nu_{12}$&&$\nu_{14}$&&&$\nu_{124}$\\
\hline
$\nu_{23}$&$\nu_{24}$&&&$\nu_{234}$&\\
\hline
&$-\nu_{12}$&&$\nu_{23}$&&$\nu_{123}$\\
\hline
$\nu_{23}$&&$-\nu_{34}$&&&$\nu_{234}$\\
\hline
&&$\nu_{13}$&$\nu_{23}$&$\nu_{123}$&\\
\hline
&$\nu_{14}$&&$-\nu_{34}$&&$\nu_{134}$\\
\hline
&&$\nu_{14}$&$\nu_{24}$&$\nu_{124}$&\\
\hline
\end{tabular}
\vspace{0.08cm}
\caption{Clasp-numbers variations under generating operations.}
\label{tableau'}
\end{table}

Finally, with Table \ref{tableau'} we reinterpret the homotopy classification of 4-component links as follows.
\begin{thm}\label{thm4compclass}
Two 4-component links, seen as closures of braids in normal forms (see Figure \ref{formenormal(1234)}), are link-homotopic if and only if their clasp-numbers are related by a sequence of transformations from Table \ref{tableau'}.
\end{thm}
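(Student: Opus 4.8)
The plan is to combine the three structural results already in place: the dictionary between links and clasp-number vectors, the Habegger--Lin criterion, and the explicit tabulation of partial conjugations. First I would recall that by Lemma \ref{lemclosure} every $4$-component link is link-homotopic to the closure of a pure braid, and by Corollary \ref{normalformunicity} such a braid is determined up to link-homotopy by its vector of clasp-numbers. By Corollary \ref{coroHabLin}, two such braids have link-homotopic closures if and only if they are related by a sequence of partial conjugations. Hence the theorem reduces to a purely combinatorial statement about vectors of clasp-numbers: the equivalence relation generated by partial conjugations must be shown to coincide with the one generated by the rows of Table \ref{tableau'}.

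For the \emph{if} direction I would argue that each row of Table \ref{tableau'} is, up to sign and integer multiplicity, the clasp-number effect of a single partial conjugation $\big\downarrow^i_j$ as read off from Table \ref{tableau}; both signs are realized by conjugating with $x_j$ or $x_j^{-1}$, and arbitrary integer multiplicities by $x_j^{k}$. Since both partial conjugations (Corollary \ref{coroHabLin}) and the bypass moves used to renormalize (Proposition \ref{propbypass}) preserve the link-homotopy class of the closure, and since clasp-numbers form a complete braid invariant (Corollary \ref{normalformunicity}), a Table \ref{tableau'}-transformation taking the clasp-number vector of $\beta$ to that of $\beta'$ exhibits a closure-preserving passage from $\beta$ to $\beta'$; hence their closures are link-homotopic.

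The \emph{only if} direction is where the real work lies. By Corollary \ref{coroHabLin} link-homotopic closures force a chain of partial conjugations, and the effect of each partial conjugation on the normal form is exactly what the preceding computation records, namely a row of Table \ref{tableau}, after the renormalization supplied by the bypass moves of Table \ref{tableau1} (Proposition \ref{propbypass}). Thus it suffices to show that every row of Table \ref{tableau} and every bypass move of Table \ref{tableau1} lies in the group of transformations generated by Table \ref{tableau'}. The first point is the stated redundancy, a finite check that the twelve rows of Table \ref{tableau} are integer combinations of the eight rows of Table \ref{tableau'}. The second point I would obtain by observing that the bypass moves arise as commutators of partial conjugations: composing $\big\downarrow^2_3$ and $\big\downarrow^3_4$ in the two possible orders differs precisely by the transformation $\nu_{1234}\mapsto\nu_{1234}+\nu_{12}$, which is the first row of Table \ref{tableau1}, and the remaining bypass moves are produced symmetrically.

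The hard part will be this last verification, which is delicate because the partial-conjugation action on clasp-number vectors is not linear but affine and state-dependent: degree-$2$ clasp-numbers are shifted by the (invariant) linking numbers $\nu_{ij}$, while degree-$3$ clasp-numbers are shifted by the current degree-$2$ clasp-numbers. Consequently the moves do not commute, and it is exactly their non-vanishing commutators, the ``curvature'' of this action, that produce the pure degree-$1$ shifts of the degree-$3$ clasp-numbers recorded in Table \ref{tableau1}. I would therefore organize the computation degree by degree: the linking numbers are fixed by every move, the degree-$2$ clasp-numbers are controlled directly by the rows of Table \ref{tableau'} modulo the linking numbers, and only at the top degree does one need the commutator correction $[\big\downarrow^i_j,\big\downarrow^k_l]$ to close the argument and conclude that Table \ref{tableau'} generates the full equivalence relation.
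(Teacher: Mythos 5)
Your proposal follows the paper's own route exactly: links are reduced to normal-form braids via Lemma \ref{lemclosure} and Corollary \ref{normalformunicity}, link-homotopy of closures is converted into partial conjugations via Corollary \ref{coroHabLin}, and the theorem becomes the claim that the transformations of Tables \ref{tableau} and \ref{tableau1} generate the same group of clasp-number moves as Table \ref{tableau'}. You are in fact more explicit than the paper at the one delicate point: where the paper only invokes ``algebraic redundancies'', you correctly observe that the action is affine and non-commuting, so that the bypass moves of Table \ref{tableau1} must be recovered as commutators of partial conjugations (your sample computation, that the two orderings of $\big\downarrow^2_3$ and $\big\downarrow^3_4$ differ exactly by $\nu_{1234}\mapsto\nu_{1234}+\nu_{12}$, is accurate) before the finite check on the twelve rows of Table \ref{tableau} closes the argument.
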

\begin{rmq}\label{rmqtable}
Table \ref{tableau1} was only used here as a tool to simplify the computations summarized in Table \ref{tableau}. We stress that Table \ref{tableau'} alone suffices to generate Table \ref{tableau1} and Table \ref{tableau}. In particular, Table \ref{tableau1} is obtained by “commuting" the rows of Table \ref{tableau'}. More precisely let us denote by $[R_i]_k$ the variation associated to the $i$-th row of Table $k$. Let us also denote by $[R_i,R_j]_k$ the “commutator of rows $i$ and $j$" from Table $k$, i.e. the variation obtained by applying the $i$-th row of Table $k$, then the $j$-th, then the opposite of the $i$-th and finally the opposite of the $j$-th. Thus, Table \ref{tableau'} generates the rows of Table \ref{tableau1} as follows:
$$\begin{array}{lcr}
[R_1]_{\ref{tableau1}}=[R_6,R_2]_{\ref{tableau'}},&\quad [R_2]_{\ref{tableau1}}=[R_1,R_5]_{\ref{tableau'}},\quad& [R_3]_{\ref{tableau1}}=[R_6,R_7]_{\ref{tableau'}},\\

[R_4]_{\ref{tableau1}}=[R_3,R_2]_{\ref{tableau'}},& [R_5]_{\ref{tableau1}}=[R_2,R_1]_{\ref{tableau'}},& [R_6]_{\ref{tableau1}}=[R_5,R_6]_{\ref{tableau'}}.\end{array}$$
\end{rmq}

Note that Levine in \cite{Levine4comp} already proved a similar result. The purpose of this paragraph is to explain the correspondence between the two approaches. The strategy adopted in \cite{Levine4comp} consists in fixing the first three components and let the fourth one carry the information of the link-homotopy indeterminacy. Levine used four integers $k,\ l,\ r,\ d$ to describe a normal form for the first three components, and integers $e_i$ with $\ i\in\{1,\ \ldots,\ 8\}$ to describe the information of the last component. Finally in \cite[Table3]{Levine4comp} he gives a list of all possible transformations on $e_i$-numbers that do not change the link-homotopy class. Fixing the last component corresponds in our setting to fixing the clasp-number $\nu_{123}$: this is why \cite[Table 3]{Levine4comp} has one less column than Tables \ref{tableau} and \ref{tableau'}. Moreover the five rows of \cite[Table 3]{Levine4comp} correspond to $(\bar{x_3},\bar{x_3})_1^{-1},(\bar{x_4},\bar{x_4})_2^{-1},(\bar{x_1},\bar{x_1})_4,\ (\bar{x_3},\bar{x_3})_4$ and $(\bar{x_1},\bar{x_1})_2^{-c}\circ(\bar{x_3},\bar{x_3})_1^{-a}\circ(\bar{x_2},\bar{x_2})_1^{-b}$, respectively, and Levine's integers correspond to clasp-numbers as follows.
\begin{table}[!htbp]
    \centering
    \begin{tabular}{|c|c|c|c|c|c|c|c|c|c|c|c|}
         \hline
         $k$ & $r$ & $l$ & $d$ & $e_1$ &  $e_2$ & $e_3$ & $e_4$ & $e_5$ & $e_6$ & $e_7$ & $e_8$ \\
         \hline
         $\nu_{12}$& $\nu_{13}$&$\nu_{23}$&$\nu_{123}$&$\nu_{14}$&$\nu_{24}$&$\nu_{34}$&$\nu_{124}$&$\nu_{134}$&$\nu_{234}$&$-\nu_{1324}$&$-\nu_{1234}$\\
         \hline
    \end{tabular}
    \label{tablelevine}
\end{table}
\subsubsection{The 5-component algebraically split case.}

This section is dedicated to the study of \emph{5-components algebraically split links}. These are links such that the linking number is zero for any pair of components. Equivalently, algebraically split links are given by the closure of a normal form with trivial clasp-numbers for any degree one comb-clasper.

The following proposition is the algebraically split version of Proposition \ref{propbypass}. The proof is essentially same and is left to the reader.

\begin{prop}\label{propbypass'}
Let $C=(\alpha_1)^{\nu_1}\cdots(\alpha_m)^{\nu_m}$ be a normal form of a pure homotopy $n$-component braid with $\nu_i=0$ for any $(\alpha_i)$ of degree one, and let $(\alpha)$ be a degree $n-3$ comb-clasper. Then $C$ and $C'=(\alpha_1)^{\nu_1}\cdots (\alpha)(\alpha_i)^{\nu_i}(\alpha)^{-1}\cdots(\alpha_m)^{\nu_m}$ have link-homotopic closures, for any $i\in\{1,\ \ldots,\ m\}$.
\end{prop}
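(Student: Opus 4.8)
The plan is to follow the proof of Proposition \ref{propbypass} verbatim, inserting the trivial factor $(\alpha)(\alpha)^{-1}$ just after $(\alpha_i)^{\nu_i}$ and then commuting the new copy of $(\alpha)$ leftwards past the block $(\alpha_i)^{\nu_i}$ to produce $C'$. The only structural change is that $(\alpha)$ now has degree $n-3$, so its support has $n-2$ elements and omits a two-element set $\{k_1,k_2\}$ rather than a single component; the hypothesis that every degree-one clasp-number vanishes is exactly what compensates for this wider complement.

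I would organize the argument by the size of $\supp(\alpha)\cap\supp(\alpha_i)$. If this intersection has at least two elements, Remark \ref{rmqclaspcalculus} allows the leaves of $(\alpha)$ and $(\alpha_i)$ to be exchanged freely (the clasper $\tilde T$ created by move (2) has repeats and vanishes by Lemma \ref{lemfeuilledouble}), so $(\alpha)$ slides past $(\alpha_i)^{\nu_i}$ and we obtain $C'$ directly. If the intersection is empty, then $\supp(\alpha_i)\subseteq\{k_1,k_2\}$, so $(\alpha_i)$ is a degree-one comb-clasper; the algebraically split hypothesis gives $\nu_i=0$, whence $(\alpha)(\alpha_i)^{\nu_i}(\alpha)^{-1}\sim(\alpha)(\alpha)^{-1}\sim\emptyset$ and $C'\sim C$ trivially.

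The only genuinely new case is $\supp(\alpha)\cap\supp(\alpha_i)=\{l\}$ for a single component $l$. Then every other component of $\supp(\alpha_i)$ lies in $\{k_1,k_2\}$, so $\supp(\alpha_i)$ is one of $\{l,k_1\}$, $\{l,k_2\}$, or $\{l,k_1,k_2\}$; the first two are degree one, hence trivial by the split hypothesis, leaving $(\alpha_i)$ a degree-two comb-clasper of support $\{l,k_1,k_2\}$. Since the $l$-leaves of $(\alpha)$ and $(\alpha_i)$ can no longer be exchanged for free, I would instead appeal to Lemma \ref{lembypass} with $T=(\alpha)$. The associated set $C_T$ of claspers meeting $\supp(\alpha)$ only in $\{l\}$ consists of those factors of $C$ whose support is contained in $\{l,k_1,k_2\}$ and contains $l$; by the split hypothesis its degree-one members are absent, so $C_T$ reduces to the single stacked block $(\alpha_i)^{\nu_i}$ (recall that a size-three support determines a unique comb-clasper). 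Its $l$-leaves therefore lie in one ball $B$ disjoint from the $l$-leaf of $(\alpha)$, the hypothesis of Lemma \ref{lembypass} is satisfied, and we may slide that leaf across $B$ via the closure, commuting $(\alpha)$ past $(\alpha_i)^{\nu_i}$ up to link-homotopy of closures.

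The main obstacle is precisely this last verification: gathering the $l$-leaves of $C_T$ into a single ball disjoint from the leaf being moved. Without the vanishing of the degree-one clasp-numbers, comb-claspers of support $\{l,k_1\}$ and $\{l,k_2\}$ would occupy earlier (lower-degree) positions of the normal form and scatter the $l$-leaves of $C_T$, so no such ball would exist and Lemma \ref{lembypass} would fail to apply. The algebraically split hypothesis is exactly what removes this obstruction.
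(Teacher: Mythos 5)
Your proposal is correct and follows essentially the same route as the paper's proof: insert the trivial pair $(\alpha)(\alpha)^{-1}$, exchange freely via Remark \ref{rmqclaspcalculus} when $|\supp(\alpha)\cap\supp(\alpha_i)|\geq 2$, and invoke Lemma \ref{lembypass} when the intersection is a single component $\{l\}$, where the algebraically split hypothesis forces $(\alpha_i)$ to be the unique degree-two comb-clasper of support $\{l,k_1,k_2\}$ so that $C_T$ is a single stacked block. Your explicit treatment of the empty-intersection and degree-one subcases is bookkeeping the paper leaves implicit, and your closing remark correctly identifies why the vanishing of the degree-one clasp-numbers is what makes the hypothesis of Lemma \ref{lembypass} verifiable.
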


Now, let $L$ be a $5$-component algebraically split link seen as the closure of the normal form: 
\begin{align*} C=&(123)^{\nu_{123}}(124)^{\nu_{124}}(125)^{\nu_{125}}(134)^{\nu_{134}}(135)^{\nu_{135}}(145)^{\nu_{145}}(234)^{\nu_{234}}(235)^{\nu_{235}}(245)^{\nu_{245}}(345)^{\nu_{345}}(1234)^{\nu_{1234}}\\&(1235)^{\nu_{1235}}(1245)^{\nu_{1245}}(1324)^{\nu_{1324}}(1325)^{\nu_{1325}}(1345)^{\nu_{1345}}(1425)^{\nu_{1425}}(1435)^{\nu_{1435}}(2345)^{\nu_{2345}}(2435)^{\nu_{2435}}\\& (12345)^{\nu_{12345}}(12435)^{\nu_{12435}}(13245)^{\nu_{13245}}(13425)^{\nu_{13425}}(14235)^{\nu_{14235}}(14325)^{\nu_{14325}}.
\end{align*}

The strategy is similar to the 4-component case. We see links as braid closures, and with Theorem \ref{normalformunicity} we know that any braid is uniquely determined by a set of clasp-numbers $\{\nu_\alpha\}$. In this case, the algebraically split condition results in the vanishing of clasp-numbers $\nu_{ij}$ (i.e. $\nu_\alpha=0$ for all $(\alpha)$ of degree 1). Now, as mentioned by Theorem \ref{thmHabLin}, the classification of links up to link-homotopy reduces to determining how operations $(\bar{x_i},\bar{x_i})_k$ for $i\neq k$ in $\{1,\ \ldots,\ 5\}$ affect the clasp-numbers. 

We first use Proposition \ref{propbypass'} to simplify the upcoming computations. In that case Proposition \ref{propbypass'} concerns degree 2 comb-claspers $(123)$, $(124)$, $(125)$, $(134)$, $(135)$, $(145)$, $(234)$, $(235)$, $(245)$ and $(345)$. We record in Table \ref{tablesometransfo'} all possible transformations on clasp-numbers obtained with Proposition \ref{propbypass'}. As before, each row represents a possible transformation, where the entry in the column $\nu_\alpha$ represents the variation of the clasp-number $\nu_\alpha$, and an empty cell means that the corresponding clasp-number remains unchanged. Note also that we only need columns corresponding to degree 4 comb-claspers because the other clasp-numbers remain unchanged.

\begin{table}[!htbp]
\centering

\begin{tabular}{|c|c|c|c|c|c|}

\hline
$\nu_{12345}$&$\nu_{12435}$&$\nu_{13245}$&$\nu_{13425}$&$\nu_{14235}$&$\nu_{14325}$\\
\hline
\hline
$\nu_{123}$&&&&&\\
\hline
&&$\nu_{123}$&&&\\
\hline
&$\nu_{124}$&&&&\\
\hline
&&&&$\nu_{124}$&\\
\hline
$\nu_{125}$&$-\nu_{125}$&&&&\\
\hline
&&&$\nu_{125}$&&$-\nu_{125}$\\
\hline
&&&$\nu_{134}$&&\\
\hline
&&&&&$\nu_{134}$\\
\hline
&$\nu_{135}$&&&$-\nu_{135}$&\\
\hline
&&$\nu_{135}$&$-\nu_{135}$&&\\
\hline
%
%
$\nu_{145}$&&$-\nu_{145}$&&&\\
\hline
&&&&$\nu_{145}$&$-\nu_{145}$\\
\hline
$\nu_{234}$&$-\nu_{234}$&&$-\nu_{234}$&&$\nu_{234}$\\
\hline
&$\nu_{234}$&$-\nu_{234}$&$\nu_{234}$&$-\nu_{234}$&\\
\hline
&&&&$\nu_{235}$&\\
\hline
&&&&&$\nu_{235}$\\
\hline
&&$\nu_{245}$&&&\\
\hline
&&&$\nu_{245}$&&\\
\hline
$\nu_{345}$&&&&&\\
\hline
&$\nu_{345}$&&&&\\
\hline
\end{tabular}
\caption{Some clasp-numbers variations with same closure.}
\label{tablesometransfo'}
\end{table}
Finally, we compute the effect of all operations $(\bar{x_i},\bar{x_i})_k$ using Definition \ref{defoperationelmt} and Table \ref{tablesometransfo'}, and simplify the results keeping only the \emph{“generating"} operations, as in the 4-component case. We record the corresponding clasp-number variations in Table \ref{tableclaspvariasimplifier'}. As for the $4$-component case, Table \ref{tableclaspvariasimplifier'} contains all the data for the classification of $5$-component algebraically split links. In other words we obtain the following classification result. 

\begin{thm}\label{thm5compclass}
Two 5-component algebraically split links, seen as closures of braids in normal forms, are link-homotopic if and only if their clasp-numbers are related by a sequence of transformations from Table \ref{tableclaspvariasimplifier'}.
\end{thm}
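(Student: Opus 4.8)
The plan is to follow exactly the strategy already carried out in the $4$-component case (Theorem~\ref{thm4compclass}), now with the simplification afforded by the algebraically split hypothesis. By Lemma~\ref{lemclosure} and Corollary~\ref{normalformunicity}, the link $L$ is link-homotopic to the closure of a unique normal form $C$, and the vanishing of the linking numbers forces all degree one clasp-numbers $\nu_{ij}$ to be zero. By Corollary~\ref{coroHabLin}, two such links are link-homotopic if and only if their braids are related by a sequence of partial conjugations. The whole problem thus reduces to determining precisely how the clasp-numbers $\{\nu_\alpha\}$ transform under partial conjugations, and to showing that the resulting set of admissible transformations is generated by the rows of Table~\ref{tableclaspvariasimplifier'}.

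First I would compute the action of each $i$-th partial conjugation by $x_j$ on the normal form $C$. Following Proposition~\ref{propconjpartiel}, this amounts to inserting the degree one claspers $\Delta$ and $(ij)^{-1}$ and sliding the $i$-leaf of $\Delta$ along the $i$-th component. Along the way $\Delta$ crosses leaves and edges of the other claspers, and by moves $(2)$ and $(4)$ of Corollary~\ref{lemclaspcalculus} this produces new higher-degree claspers; those with repeats vanish by Lemma~\ref{lemfeuilledouble}, and the survivors must be returned to normal form using Remark~\ref{rmqclaspcalculus} together with IHX relations (Proposition~\ref{lemIHX}) to recover genuine comb-claspers. The algebraically split hypothesis considerably shortens this bookkeeping: since every $\nu_{ij}$ vanishes, the sliding leaf only interacts with degree $\geq 2$ factors, so the only surviving contributions come from crossings with degree $2$ and degree $3$ comb-claspers. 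This yields the full table of clasp-number variations, which I would then reduce with Proposition~\ref{propbypass'}: its consequences are recorded in Table~\ref{tablesometransfo'}, and allow one to discard the redundant degree $4$ contributions, exactly as $(1324)^{\nu_{13}}$ was discarded in the $4$-component computation.

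It then remains to extract a generating family of transformations. As in the $4$-component case, many of the computed partial-conjugation rows are integer combinations of others modulo the relations of Table~\ref{tablesometransfo'}; keeping one representative from each class gives the \emph{generating} partial conjugations of Table~\ref{tableclaspvariasimplifier'}. The two implications then close the argument: for the direction from link-homotopy to clasp-numbers, any sequence of partial conjugations relating the two braids decomposes into the generating ones, each acting on clasp-numbers by a row of Table~\ref{tableclaspvariasimplifier'}; for the converse direction, each row of the table is realized by an explicit generating partial conjugation, composed where necessary with bypass moves from Proposition~\ref{propbypass'}, hence preserves the link-homotopy class of the closure.

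The hard part will be the sheer combinatorial volume of the degree-by-degree bookkeeping: with five strands there are twenty-six comb-claspers of degree $\geq 2$ and twenty partial conjugations, and each slide of a leaf can spawn degree $3$ claspers that are not in comb form and require repeated IHX resolution before they can be identified with entries of $\mathcal F$. Keeping track of signs and of the ordering of the factors during these IHX rewritings, and then verifying that the listed generating transformations indeed span the lattice of all admissible clasp-number variations, is where the real work and the main risk of error lie; the algebraically split hypothesis is precisely what makes this computation tractable, by killing every degree one interaction at the outset.
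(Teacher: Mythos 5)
Your proposal is correct and follows essentially the same route as the paper: reduction to normal forms via Lemma~\ref{lemclosure} and Corollary~\ref{normalformunicity}, reduction of link-homotopy of closures to partial conjugations via Corollary~\ref{coroHabLin}, explicit computation of each partial conjugation's action on clasp-numbers via Proposition~\ref{propconjpartiel} and clasper calculus, simplification by the bypass moves of Proposition~\ref{propbypass'} (Table~\ref{tablesometransfo'}), and extraction of the generating transformations of Table~\ref{tableclaspvariasimplifier'}. Your identification of the combinatorial bookkeeping (IHX resolution of non-comb degree~$3$ claspers, sign tracking, and verifying the generating set spans all admissible variations) as the real content of the proof matches exactly where the paper's work lies.
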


\begin{rmq}

Just as in Remark \ref{rmqtable}, only Table \ref{tableclaspvariasimplifier'} is needed here as it generates Table \ref{tablesometransfo'}. With the same notations as in Remark \ref{rmqtable} and with the additional notation “$\circ$" for composition, we get:
$$\begin{array}{cccc}
    [R_1]_{\ref{tablesometransfo'}}=[R_{12},R_3]_{\ref{tableclaspvariasimplifier'}}\circ[R_5,R_6]_{\ref{tableclaspvariasimplifier'}},& [R_2]_{\ref{tablesometransfo'}}=[R_{6},R_{5}]_{\ref{tableclaspvariasimplifier'}},& [R_3]_{\ref{tablesometransfo'}}=[R_{11},R_{12}]_{\ref{tableclaspvariasimplifier'}},&[R_4]_{\ref{tablesometransfo'}}=[R_6,R_{14}]_{\ref{tableclaspvariasimplifier'}},\\

[R_5]_{\ref{tablesometransfo'}}=[R_5,R_{11}]_{\ref{tableclaspvariasimplifier'}}\circ[R_3,R_{11}]_{\ref{tableclaspvariasimplifier'}},& [R_6]_{\ref{tablesometransfo'}}=[R_3,R_{11}]_{\ref{tableclaspvariasimplifier'}},&
[R_7]_{\ref{tablesometransfo'}}=[R_{12},R_{13}]_{\ref{tableclaspvariasimplifier'}},& [R_8]_{\ref{tablesometransfo'}}=[R_8,R_9]_{\ref{tableclaspvariasimplifier'}},\\

[R_9]_{\ref{tablesometransfo'}}=[R_{1},R_{5}]_{\ref{tableclaspvariasimplifier'}},& [R_{10}]_{\ref{tablesometransfo'}}=[R_{13},R_{5}]_{\ref{tableclaspvariasimplifier'}},& 
[R_{11}]_{\ref{tablesometransfo'}}=[R_{2},R_{1}]_{\ref{tableclaspvariasimplifier'}},& [R_{12}]_{\ref{tablesometransfo'}}=[R_{13},R_{14}]_{\ref{tableclaspvariasimplifier'}},\\

[R_{13}]_{\ref{tablesometransfo'}}=[R_{6},R_{4}]_{\ref{tableclaspvariasimplifier'}},& [R_{14}]_{\ref{tablesometransfo'}}=[R_{7},R_{9}]_{\ref{tableclaspvariasimplifier'}},& 
[R_{15}]_{\ref{tablesometransfo'}}=[R_{5},R_{10}]_{\ref{tableclaspvariasimplifier'}},& [R_{16}]_{\ref{tablesometransfo'}}=[R_{7},R_{3}]_{\ref{tableclaspvariasimplifier'}},\\

[R_{17}]_{\ref{tablesometransfo'}}=[R_{4},R_{2}]_{\ref{tableclaspvariasimplifier'}},& [R_{18}]_{\ref{tablesometransfo'}}=[R_{10},R_{11}]_{\ref{tableclaspvariasimplifier'}},& 
[R_{19}]_{\ref{tablesometransfo'}}=[R_{1},R_{7}]_{\ref{tableclaspvariasimplifier'}},& [R_{20}]_{\ref{tablesometransfo'}}=[R_{10},R_{1}]_{\ref{tableclaspvariasimplifier'}}.\end{array}$$
\end{rmq}
\begin{amssidewaystable}
\centering
\resizebox{\linewidth}{!}{
\begin{tabular}{|c|c|c|c|c|c|c|c|c|c|c|c|c|c|c|c|}
\hline
$\nu_{1234}$&$\nu_{1235}$&$\nu_{1245}$&$\nu_{1324}$&$\nu_{1325}$&$\nu_{1345}$&$\nu_{1425}$&$\nu_{1435}$&$\nu_{2345}$&$\nu_{2435}$&$\nu_{12345}$&$\nu_{12435}$&$\nu_{13245}$&$\nu_{13425}$&$\nu_{14235}$&$\nu_{14325}$\\
\hline
\hline
$\nu_{134}$&$\nu_{135}$&$\nu_{145}$&&&&&&&&$\nu_{1345}$&$\nu_{1435}$&&&&\\
\hline
&&&$\nu_{124}$&$\nu_{125}$&$\nu_{145}$&&&&&&&$\nu_{1245}$&$\nu_{1425}$&&\\
\hline
$-\nu_{123}$&&&$\nu_{123}$&&&$\nu_{125}$&$\nu_{135}$&&&&&&&$\nu_{1235}$&$\nu_{1325}$\\
\hline
$\nu_{234}$&$\nu_{235}$&$\nu_{245}$&&&&&&&&$\nu_{2345}$&$\nu_{2435}$&&&&\\
\hline
&&$\nu_{125}$&$-\nu_{123}$&&&$-\nu_{125}$&&&$\nu_{235}$&&$\nu_{1235}$&$\nu_{1325}$&$-\nu_{1325}$&$-\nu_{1235}$&\\
\hline
&&&&$\nu_{123}$&&$\nu_{124}$&&$\nu_{234}$&$-\nu_{234}$&&&&$\nu_{1234}+\nu_{1324}$&&$-\nu_{1234}$\\
\hline
&&&$\nu_{234}$&$\nu_{235}$&$-\nu_{345}$&&&&&&&$\nu_{2345}+\nu_{2435}$&$-\nu_{2435}$&&\\
\hline
$\nu_{134}$&$\nu_{135}$&&$-\nu_{134}$&$-\nu_{135}$&&&&$\nu_{345}$&&$\nu_{1345}$&&$-\nu_{1345}$&&$\nu_{1435}$&$-\nu_{1435}$\\
\hline
&$-\nu_{123}$&&&&&&$\nu_{134}$&&$\nu_{234}$&&$\nu_{1234}+\nu_{1324}$&&&$-\nu_{1324}$&\\
\hline
$\nu_{234}$&&&$-\nu_{234}$&&&$\nu_{245}$&$\nu_{345}$&&&&&&&$\nu_{2345}+\nu_{2435}$&$-\nu_{2345}$\\
\hline
$\nu_{124}$&&&&&$\nu_{145}$&&$-\nu_{145}$&$\nu_{245}$&$-\nu_{245}$&$\nu_{1245}$&$-\nu_{1245}$&&$\nu_{1425}$&&$-\nu_{1425}$\\
\hline
&&$\nu_{124}$&&&$\nu_{134}$&&&$\nu_{234}$&&$\nu_{1234}$&&$\nu_{1324}$&&&\\
\hline
&&&&$\nu_{135}$&&$\nu_{145}$&&$-\nu_{345}$&$\nu_{345}$&&&&$\nu_{1345}$&&$\nu_{1435}$\\
\hline
&$\nu_{125}$&&&&&&$\nu_{145}$&&$\nu_{245}$&&$\nu_{1245}$&&&$\nu_{1425}$&\\
\hline
&&$\nu_{125}$&&&$\nu_{135}$&&&$\nu_{235}$&&$\nu_{1235}$&&$\nu_{1325}$&&&\\
\hline
\end{tabular}}
\caption{Clasp-numbers variations under generating operations in the 5-component algebraically split case.}
\label{tableclaspvariasimplifier'}
\end{amssidewaystable}

\newpage

\bibliographystyle{plain}
\bibliography{bibli.bib}
\end{document}